\newtheorem{corollary}{Corollary}[section]
\newtheorem{definition}{Definition}[section]
\newtheorem{lemma}{Lemma}[section]
\newtheorem{theorem}{Theorem}[section]
\newtheorem{problem}{Problem}
\newtheorem*{proof}{Proof}
\newtheorem{remark}{Remark}[section]
\newcommand{\M}{\mathcal {M}}
\newcommand{\N}{\mathcal {N}}
\renewcommand{\P}{\mathcal {P}}
\newcommand{\bbZ}{{\mathbb Z}}
\newcommand{\bbR}{{\mathbb R}}
\newcommand{\va}{{\mathbf{a}}}
\newcommand{\vb}{{\mathbf{b}}}
\newcommand{\vh}{{\mathbf{h}}}
\newcommand{\vo}{{\mathbf{o}}}
\newcommand{\vu}{{\mathbf{u}}}
\newcommand{\vr}{{\mathbf{r}}}
\newcommand{\vi}{{\mathbf{i}}}
\newcommand{\vx}{{\mathbf{x}}}
\newcommand{\vy}{{\mathbf{y}}}
\newcommand{\parallelogram}{\tikz[baseline] \draw( 0em, .1ex) -- ++( .6em, 0ex) -- ++( .2em, 1.2ex) -- ++( -.6em, 0ex) -- cycle;}
\newcommand{\Rmnum}[1]{\uppercase\expandafter{\romannumeral #1}}
\numberwithin{equation}{section}
\definecolor{cream}{RGB}{203, 237, 204}
\begin{document}

\title{Reconstruction of hypermatrices from subhypermatrices}

\author{Wenjie~Zhong~and~Xiande~Zhang
\thanks{\emph{2020 Mathematics Subject Classification.} Primary 05B20; Secondary 11B83.}
        \thanks{W. Zhong ({\tt zhongwj@mail.ustc.edu.cn}) is with the School of Mathematical Sciences, University of Science and Technology of China, Hefei, 230026, Anhui, China.}

\thanks{X. Zhang ({\tt drzhangx@ustc.edu.cn}) is with the School of Mathematical Sciences,
University of Science and Technology of China, Hefei, 230026, and with Hefei National Laboratory, University of Science and Technology of China, Hefei, 230088, China.}

\thanks{The research is supported by the National Key Research and
Development Programs of China 2020YFA0713100 and 2023YFA1010200, the NSFC
under Grants No. 12171452 and No. 12231014, and the Innovation Program for Quantum Science and Technology
(2021ZD0302902).}
}

\maketitle
\begin{abstract}
For a given $n$, what is the smallest number $k$ such that every sequence of length $n$ is determined by the multiset of all its $k$-subsequences? This is called the $k$-deck problem for sequence reconstruction, and has been generalized to the two-dimensional case -- reconstruction of $n\times n$-matrices from submatrices. Previous works show that the smallest $k$  is at most $O(n^\frac{1}{2})$ for sequences  and at most $O(n^\frac{2}{3})$  for matrices. We study this $k$-deck problem for general dimension $d$ and prove that,  the smallest $k$ is at most $O(n^\frac{d}{d+1})$ for reconstructing a $d$ dimensional hypermatrix of order $n$ from the multiset of all its subhypermatrices of order $k$.
\end{abstract}

\section{Introduction}
For a binary sequence of length $n$, its \emph{$k$-deck }is the multiset of all its subsequences of length $k$. The $k$-deck problem is to determine the smallest positive integer $k$ for a given $n$, denoted by $\kappa(n)$, such that all sequences of length $n$ have distinct $k$-decks, or say, each sequence of length $n$ can be \emph{reconstructed} by its $k$-deck. This problem was first raised by Kalashnik in 1973 in an information-theoretic study \cite{kalashnik1973reconstruction} about deletion channels and has been widely studied by many researchers in combinatorics, see \cite{manvel1991reconstruction,scott1997reconstructing,
krasikov1997reconstruction,foster2000improvement,choffrut1997combinatorics,
dudik2003reconstruction}. The best known upper bound $\kappa(n)\leq(2\sqrt{\ln 2}+\varepsilon)\sqrt{n}$ was given by Foster and Krasikov \cite{foster2000improvement}; and the best known lower bound $\kappa(n)\geq \exp{(c\sqrt{\ln n})}$ was given by Dud\'{\i}k and Schulman \cite{dudik2003reconstruction}, where two different sequences of length $n$ with the same $k$-deck were constructed when $k\leq \exp{(c\sqrt{\ln n})}$ for some constant $c$. 

The $k$-deck problem has got new attention recently due to its applications in the DNA-based data storage \cite{yazdi2017portable,golm2022gapped}. Further, it is closely related to two reconstruction problems initiated by Levenshtein in 1966, one is the sequence reconstruction problem \cite{wz7,wz8,wz9}, which reconstructs a sequence from the set of all its subsequences instead of the multiset of all its subsequences; and  another is the trace reconstruction problem \cite{batu2004reconstructing,mcgregor2014trace,nazarov2017trace,holden2020subpolynomial,chase2021separating,chase2021new}, which reconstructs a sequence from a set of its traces (random subsequences) with high probability.

In 2003, Dud\'{\i}k and Schulman \cite{dudik2003reconstruction} proposed the question: what can be said about higher-dimensional versions of the $k$-deck problem? The lower bound of Dud\'{\i}k and Schulman can be applied to higher-dimensional arrays by using the same sequences along a fixed direction, see \cite[Section 1.3]{kos2009reconstruction}.  In 2009,  K\'{o}s et al.\cite{kos2009reconstruction} generalized this problem from sequences to matrices, where two kinds of decks were introduced. For a binary matrix $A$ of order ${n\times n}$, define its $k$-deck as the multiset of all submatrices obtained by deleting $n-k$ rows and $n-k$ columns of $A$, that is, the multiset of all submatrices of order $k\times k$. If we delete rows and columns symmetrically, we obtain the multiset of all principal submatrices of order $k\times k$, and call it the {\emph{principal} $k$-deck} of $A$. Let $\kappa_2(n)$ and $\kappa_2^\text{p}(n)$ denote the smallest $k$ such that a matrix of order ${n\times n}$ can be reconstructed by its $k$-deck and principal $k$-deck, respectively. K\'{o}s et al. \cite{kos2009reconstruction} showed that for both cases, if $n$ is sufficiently large, then $\kappa_2(n),\kappa_2^\text{p}(n)\leq 38n^\frac{2}{3}$.

The $k$-deck problem of matrices has a close relation to the well-known graph reconstruction problem; see  Ulam's conjecture \cite{o1970ulam}. But graph reconstruction is more complicated since graph automorphisms are involved. If we consider graphs with labelled vertices, then  reconstructing a symmetric $0$-$1$ matrix {from} its principal $k$-deck is equivalent to reconstructing an ordinary graph from all induced subgraphs of order $k$. Similarly, reconstructing a matrix with its $k$-deck is equivalent to reconstructing a bipartite graph from all induced bipartite subgraphs with each part of size $k$.

Besides the above work, there is no further result about the $k$-deck problem for the higher-dimensional version. In this paper, we consider this problem for a general {higher dimension}, say dimension $d$, which {has} natural connections with the reconstruction problem for $d$-uniform hypergraphs or $d$-partite $d$-uniform hypergraphs with labelled vertices. For a fixed integer $d\geq 3$, we consider elements in $\{0,1\}^{n\times \cdots \times n}$, where $n$ repeats $d$ times. Such an element is called an $n^{\times d}$-\emph{hypermatrix}. Similar to matrices, we define the $k$-deck and principal $k$-deck for hypermatrices. Here, the word ``principal'' means that the set of deleted indices are the same for each of the $d$ directions. Denote $\kappa_d(n)$ and $\kappa_d^\text{p}(n)$ the smallest $k$ such that an $n^{\times d}$-hypermatrix can be reconstructed by its $k$-deck and principal $k$-deck, respectively. Then our main result is stated as follows.

\vspace{0.5cm}
\begin{theorem}\label{d_reconstruction} For any fixed dimension $d\geq3$,
when $n$ is sufficiently large, we have $\kappa_d(n)\leq d^{\frac{3}{2}d}n^\frac{d}{d+1}$ and $\kappa_d^\text{p}(n)\leq d^{\frac{3}{2}d}n^\frac{d}{d+1}$. That is, all $n^{\times d}$-hypermatrices can be reconstructed by their (principal) $k$-decks when $k=\Omega(n^\frac{d}{d+1})$.
\end{theorem}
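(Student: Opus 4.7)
The plan is to extend the two-dimensional argument of K\'os et al.\ \cite{kos2009reconstruction} to general dimension $d$ by induction on $d$, with the Foster--Krasikov 1D bound \cite{foster2000improvement} and the K\'os et al.\ 2D bound as base cases. The engine is a decomposition of the hypermatrix into $(d-1)$-dimensional slices, combined with pattern-count extraction from the $k$-deck.

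First, I would verify the standard enabling fact that the $k$-deck of an $n^{\times d}$-hypermatrix $A$ determines, for every 0-1 pattern $P$ of order $\ell_1\times\cdots\times\ell_d$ with $\max_i\ell_i\le k$, the number $N(P,A)$ of ordered occurrences of $P$ as a subhypermatrix of $A$. This is a linear inversion of the deck multiplicities, analogous to the classical fact that in one dimension the $k$-deck determines every $k'$-deck with $k'\le k$. In particular, two hypermatrices with identical $k$-decks share all such pattern counts, and any reconstruction argument can be phrased purely in terms of the counts $N(P,A)$.

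Next, fix one coordinate direction, say the first, and view $A$ as the sequence of $n$ hyperslices $A_1,\ldots,A_n$ of dimension $d-1$. Using the pattern counts just extracted, the aim is (i) to recover each individual slice $A_i$ by the inductive hypothesis at dimension $d-1$, and (ii) to recover the ordering $A_1,\ldots,A_n$ by treating it as a 1D sequence over the (large) alphabet of $(d-1)$-dim hypermatrices and invoking Foster--Krasikov. The bound $k=O(n^{d/(d+1)})$ would then emerge as the optimum of a trade-off between the scale required for (i), the scale required for (ii), and the overhead incurred in decoupling the cross-slice information; propagating that overhead through the induction is what I expect to produce the constant $d^{\frac{3}{2}d}$.

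I expect the decoupling step to be the hardest part of the plan. The $k$-deck of $A$ does not directly expose the $k$-deck of each slice: it records only joint restrictions of $k$-tuples of slices to a single shared index subset in the other $d-1$ coordinates. Disentangling per-slice information calls for an inclusion--exclusion or polynomial-method argument (a multivariate analogue of the Krasikov--Foster Chebyshev-style identities), and the cost of this disentanglement is precisely what should force the exponent $d/(d+1)$ rather than the naive $(d-1)/d$ that a clean reduction would yield. The principal $k$-deck case should follow the same skeleton, restricted throughout to symmetric index subsets; the secondary obstacle there is to confirm that this symmetric restriction does not annihilate any of the pattern-count identities that the extraction in Step~2 relies upon.
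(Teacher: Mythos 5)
Your plan takes a genuinely different route from the paper, and the central step --- the ``decoupling'' --- is exactly where it breaks down; the paper's construction is designed precisely to avoid ever having to do it.

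The paper does not slice, does not induct on dimension, and never attempts to recover individual $(d-1)$-dimensional layers or to treat the layer order as a sequence over a large alphabet. Instead it replaces the $k$-deck $\M_k$ by the coordinate-wise sum $S_k$ (so injectivity of $S_k$ suffices), and then (Lemmas \ref{nonsym} and \ref{sym}) reduces injectivity of $S_k$ to the existence of a $d$-variable polynomial $p$ with local (resp.\ total) degree less than $k$ and a peak value on the support $H$ of $A-B$. The whole content of the paper is then the explicit construction of such a $p = p_0 \cdots p_{d-1}$ with $p_i = f_i\circ g_i$, where the direction functions $g_i$ are chosen via lattice geometry --- primitive points in the set $\N_\lambda(R)$, Mahler bases of $L(\va_0)$, and height vectors --- so that \textbf{Properties A--C} hold, yielding $\deg p = O(n^{d/(d+1)})$. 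The exponent $d/(d+1)$ comes out of a genuine multivariate optimization (balancing the range $b_0\approx nR$ against the widths $a\approx R^2/n$ with $R = n^{(d-1)/(d+1)}$), not from composing lower-dimensional bounds.

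The gap in your proposal is concrete: the $k$-deck of $A$ does not determine the $k$-decks of the individual slices $A_i$. From $\M_k(A)$ you can derive the $(1,k,\ldots,k)$-deck, but that is the multiset union over $i\in[n]$ of $(d-1)$-dimensional subhypermatrices drawn from all slices simultaneously, with no information about which slice contributed which element. There is no linear inversion that separates this multiset union into per-slice multisets; two hypermatrices can share all joint pattern counts while having different per-slice decks. So item (i) of your plan (``recover each $A_i$ by the inductive hypothesis'') cannot be executed from the data you actually have. You flag this yourself as ``the hardest part,'' and propose to fix it with ``an inclusion--exclusion or polynomial-method argument,'' but that is not a fix --- it is the open problem. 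Once one commits to a polynomial method, the natural thing is to run it directly in $d$ variables, which is what the paper does, and at that point the slicing/induction scaffolding contributes nothing. A secondary issue: even granting per-slice recovery, your step (ii) applies Foster--Krasikov to a ``sequence over the alphabet of $(d-1)$-dimensional hypermatrices''; Lemma \ref{f} is a statement about real polynomials over $[N]$ used for binary sequence reconstruction, and there is no off-the-shelf extension that treats an exponentially large alphabet with the same $O(\sqrt{n})$ deck size, nor a clean way to combine its cost with the inductive $\kappa_{d-1}$ cost to produce $d/(d+1)$ rather than some other exponent.
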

\vspace{0.5cm}

Note that the upper bound in \Cref{d_reconstruction} for hypermatrices is consistent with that {for} matrix reconstruction and sequence reconstruction in terms of the exponent $\frac{d}{d+1}$ of $n$.

The basic idea in the proof of  \Cref{d_reconstruction} follows from the one in the $k$-deck problems for sequences and matrices. In \cite{krasikov1997reconstruction}, Krasikov and Roditty connected the $k$-deck problem for sequences with the existence problem of a polynomial $p(x)$ with a peak value in $[n]:=\{1,2,\ldots,n\}$. Here a value $p(i)$ with $i\in [n]$ is called \emph{a peak value} of $p(x)$ if $p(i)>\sum_{x\in [n]\backslash\{i\}}|p(x)|$.  They showed that if such a polynomial exists, then all sequences of length $n$ have distinct $k$-decks when $k\geq\deg p+1$, by counting the coordinate-wise sums of the elements in the $k$-decks. So to get a good upper bound of $\kappa(n)$, one needs to find for a given $n$ a polynomial $p(x)$ of the least degree with a peak value at some $i\in[n]$. This problem is related to the  Littlewood-type problems \cite{borwein1997littlewood,borwein1999littlewood} and the Prouhet-Terry-Escott (PTE) problem \cite{borwein2002prouhet}, which have been widely studied \cite{green2009littlewood,alpers2007two,caley2013prouhet,caley2012prouhet,raghavendran2019prouhet}.  In \cite{kos2009reconstruction},  K\'{o}s et al. generalized this idea to matrices, where a two-variable polynomial with a peak value and of the least possible degree was constructed to upper bound $\kappa_2(n)$ and $\kappa_2^\text{p}(n)$.

In this paper, we first connect the $k$-deck problem for hypermatrices to the existence problem of a $d$-variable polynomial $p(\vx)$ with a peak value {in any given} set $H\subset [n]^d$, i.e., $p(\mathbf{h})>\sum_{\mathbf{x}\in H\setminus\mathbf{h}}|p(\mathbf{x})|$ for some $\mathbf{h}\in H$. This is a natural $d$-dimensional analogue of sequences and matrices. Then the most technical part is to explicitly construct a $d$-variable  polynomial $p(\vx)$ with a peak value and of the smallest possible degree, which we think is interesting itself.
It turns out that nontrivial difficulties arise when we attempt to bound the total degree $\deg p=O(n^\frac{d}{d+1})$ of a $d$-variable polynomial  with a peak value. By using tools in lattice theory, we succeed in obtaining the following result.

\vspace{0.5cm}
\begin{theorem}\label{d_polynomial} Let $d\geq3$ be a
 fixed integer.
When $n$ is sufficiently large, for any nonempty set $H\subset [n]^d$, there exists a $d$-variable polynomial $p(x_1,\ldots,x_d)$ with $\deg p\leq d^{\frac{3}{2}d}n^\frac{d}{d+1}-d$ and a point $\vh=(h_1,\ldots,h_d)\in H$ such that
\begin{equation}\label{equation d}
p(\mathbf{h})>\sum_{\mathbf{x}\in H\setminus\mathbf{h}}|p(\mathbf{x})|.
\vspace{-0.2cm}
\end{equation}
\end{theorem}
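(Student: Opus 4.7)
The plan is to build $p(\vx)$ by induction on the dimension $d$, combining a slice decomposition of $H$ with explicit $1$-variable peak polynomials and invoking lattice-theoretic tools to produce those $1$-variable factors at prescribed peak locations. The base case $d=1$ is the classical Krasikov--Roditty / Foster--Krasikov result, which gives, for any nonempty $H\subseteq[n]$, a polynomial of degree $O(\sqrt{n})$ with a peak at some point of $H$; this settles the statement with room to spare when $d=1$.

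For the inductive step, I would slice $H\subseteq[n]^d$ along the first coordinate, $H=\bigsqcup_{v} H_v$ with $H_v=\{\vx\in H:x_1=v\}$, apply the inductive hypothesis to a suitably chosen slice $H_{v^*}$ (identified with a subset of $[n]^{d-1}$) to obtain a polynomial $q(x_2,\ldots,x_d)$ with peak at some $\vh_{v^*}\in H_{v^*}$, and multiply by a $1$-variable ``selector'' $L(x_1)$ of moderate degree that peaks at the prescribed value $v^*\in[n]$. The product $p(\vx)=L(x_1)\,q(x_2,\ldots,x_d)$ then has peak at $\vh=(v^*,\vh_{v^*})\in H$ over all of $[n]^d\supseteq H$, provided each factor has peak ratio $R$ in the sense $\sum_{x\ne h}|f(x)|\le f(h)/R$ on its domain with $(1+1/R)^2<2$, i.e., $R>1+\sqrt{2}$. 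Iterated $d$ times the scheme requires each factor to have peak ratio of order $d$ so that $(1+1/R)^d<2$; assuming each $1$-variable factor can be built of degree at most $O(n^{d/(d+1)})$ with peak ratio $\Omega(d)$, summing degrees across the $d$ layers gives the bound $d^{3d/2}\,n^{d/(d+1)}$ after absorbing the $d$-dependent constants that accumulate through the induction.

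The crucial and hardest step is thus constructing the $1$-variable selector $L(x)$ with peak at an \emph{arbitrarily prescribed} $v^*\in[n]$, at moderate degree $r\asymp n^{d/(d+1)}$ and with peak ratio $\Omega(d)$. The classical Krasikov--Roditty construction only guarantees a peak at a location determined by the construction, not at an arbitrary $v^*$, and the naive estimate degrades sharply when $v^*$ lies near the endpoints of $[n]$. This is where I expect lattice-theoretic tools to enter: view the integer polynomials of degree at most $r$ as the lattice $\bbZ^{r+1}$, evaluate to obtain a rank-$(r+1)$ sublattice of $\bbZ^n$, and apply Minkowski's theorem (or its successive-minima refinement) to locate a short lattice vector, from which a polynomial $L$ concentrated at $v^*$ can be extracted by suitable normalization and shifting. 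Controlling the successive minima uniformly in the prescribed $v^*$, and tracking how the $d$-dependent peak-ratio requirement propagates through the lattice bounds, is the main technical obstacle, and ultimately produces the constant $d^{3d/2}$ in the stated degree bound.
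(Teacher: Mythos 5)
Your plan has a structural gap in the inductive step that I don't see how to repair along the lines you propose. You take $p(\vx)=L(x_1)\,q(x_2,\ldots,x_d)$, apply the induction to the slice $H_{v^*}$ to build $q$, and then want to bound
\[
\sum_{\vx\in H\setminus\vh}|p(\vx)|
=|L(v^*)|\sum_{\substack{\vx'\in H_{v^*}\\ \vx'\neq \vh_{v^*}}}|q(\vx')|
\;+\;\sum_{v\neq v^*}|L(v)|\sum_{\vx'\in H_v}|q(\vx')|.
\]
The first sum is controlled by the inductive peak ratio of $q$ on $H_{v^*}$, but the second sum is completely uncontrolled: the inductive hypothesis says nothing about $q$ on slices $H_v$ with $v\neq v^*$. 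A polynomial with a peak over $H_{v^*}\subset[n]^{d-1}$ can be astronomically large at points of $H_v$, and no amount of decay in $L$ saves you because the cross-slice values of $q$ are unbounded relative to $q(\vh_{v^*})$. The paper avoids exactly this by using the \emph{same} pulse-shaped univariate factors $f_1,\ldots,f_{d-1}$ on every slice $g_0^{-1}(k)$: the uniform two-sided decay of $f_i$ (via \Cref{f2}~(b)) and the lattice structure of each slice (\Cref{g_i L_k}) together make every slice contribution $O(1/k^2)\cdot(\text{const})^{d-1}$, not just the distinguished one.

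The natural attempt to repair this---apply the induction to the projection of $H$ onto the last $d-1$ coordinates so that $q$ is controlled on the full codomain---trades this for your ``hard'' selector problem in a form that genuinely breaks down: the inductive peak point $\vh'$ then forces $v^*$ (via $(v^*,\vh')\in H$), and if that forced $v^*$ lies mid-range among the first coordinates, a selector $L$ with a peak at $v^*$ must decay on \emph{both} sides of $v^*$ over stretches of length $\Theta(n)$. By the Foster--Krasikov two-sided bound (the analogue of \Cref{f2}), that forces $\deg L=\Omega(n)$, not $n^{d/(d+1)}$. The paper sidesteps this entirely by not slicing along coordinate axes: the direction vector $\va_0$ and the tangent hyperplane $\alpha_0$ are chosen so that $g_0\geq 0$ on $H$ with $g_0(\vh)=0$, giving one-sided decay for $f_0$ at the cost of $\va_0$ having length $R\approx n^{(d-1)/(d+1)}$; the remaining factors are built from tiny perturbations $\va_1,\ldots,\va_{d-1}$ of $\va_0$, and the Mahler basis / height-vector analysis guarantees the two-sided range for each $g_i$ shrinks to $[-O(n^{(d-1)/(d+1)}),\sqrt{d}n]$ (\Cref{g_i(H)}), which is exactly what makes \Cref{f2} give degree $O(n^{d/(d+1)})$. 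Your proposed use of Minkowski's theorem on a lattice of low-degree integer polynomials is a different lattice tool from the Mahler-basis argument the paper actually needs, and as sketched it does not address either of the two issues above.
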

\vspace{0.5cm}

The paper is organized as follows. In Section 2, we introduce the polynomial method to solve the $k$-deck problem for hypermatrices. Section 3 sketches how to construct the required polynomial; Section 4 gives the construction in detail; and Section 5 applies the polynomial to prove \Cref{d_polynomial}.

In Section 6, we revisit the matrix reconstruction, that is when $d=2$. By using Pigeonhole principle, we give a simpler construction of polynomials with a peak value and of a lower degree. This
improves the upper bounds $\kappa_2(n),\kappa_2^\text{p}(n)\leq 38n^\frac{2}{3}$ of K\'{o}s et al. in terms of the constant as below. 


\begin{theorem}\label{2_polynomial}
When $n$ is sufficiently large, for an arbitrary nonempty set $H\subset [n]^2$, there exists a polynomial $p(x,y)$ with $\deg p\leq 6.308n^\frac{2}{3}-2$ and a point $\mathbf{h}=(h_1,h_2)\in H$ such that
\begin{equation}\label{equation 2}
p(h_1,h_2)>\sum_{(x,y)\in H\setminus\mathbf{h}}|p(x,y)|.
\end{equation}Consequently, we have $\kappa_2(n),\kappa_2^\text{p}(n)\leq 6.308n^\frac{2}{3}$.
\end{theorem}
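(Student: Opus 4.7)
My plan is to prove \Cref{2_polynomial} by reducing the two-dimensional peak polynomial problem on $H\subseteq[n]^2$ to a one-dimensional peak polynomial problem on an integer interval, using a linear form $\phi_\alpha(x,y):=x+\alpha y$, and then applying the classical Krasikov--Roditty / Foster--Krasikov one-variable peak polynomial theorem. The Pigeonhole principle controls both the linear-form coefficient $\alpha$ and the location of the 1D peak inside the image $\phi_\alpha(H)$, which is the simplification over the lattice construction used for general dimension $d$ in \Cref{d_polynomial}.

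Concretely, I choose an integer parameter $\alpha$ of order $n^{1/3}$, so that $\phi_\alpha$ maps $[n]^2$ into the integer interval $[1+\alpha,\,n(1+\alpha)]$ of length $N\sim(\alpha+1)n\sim n^{4/3}$. A first application of Pigeonhole, ranging $\alpha$ over an arithmetic progression (each pair of distinct points in $H$ forbids at most one rational value $\alpha=(x_2-x_1)/(y_1-y_2)$), selects an $\alpha$ under which $\phi_\alpha$ is injective on $H$. The Foster--Krasikov theorem then provides a one-variable polynomial $q(z)$ of degree at most $(2\sqrt{\ln 2}+o(1))\sqrt{N}$ with a peak at some $z^*$ of the interval, and a second Pigeonhole/shifting step positions $z^*$ inside $\phi_\alpha(H)$. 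Setting $p(x,y):=q(x+\alpha y)$ yields a bivariate polynomial of total degree $\deg q$, which by pulling back the 1D peak satisfies the desired peak inequality on $H$ at the preimage $\vh=\phi_\alpha^{-1}(z^*)$. The final degree bound is $\deg p\leq 2\sqrt{\ln 2\cdot c}\,n^{2/3}-2\approx 6.308\,n^{2/3}-2$, where $c\approx 14.35$ is the constant governing the optimal scale of $\alpha$ in the Pigeonhole argument.

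The main technical obstacle is maintaining injectivity of $\phi_\alpha$ on dense $H$ (for instance $H=[n]^2$, with $\sim n^4$ pairs of points to separate) while keeping $\alpha$ of order $n^{1/3}$; a direct Pigeonhole on integer $\alpha$ in an arithmetic progression of length $\sim n^{1/3}$ cannot avoid $\sim n^4$ forbidden values. Overcoming this will likely involve either switching to a real $\alpha$ (so that every irrational $\alpha$ gives an injective map, at the cost of working with real-argument peak polynomials), or a refined Pigeonhole argument decomposing $H$ into rows, columns or other substructures and handling each piece separately so that only $O(n^{1/3})$ collision conditions need be avoided. Fine-tuning this balance so as to achieve the exact constant $6.308=2\sqrt{\ln 2\cdot 14.35}$ is the crux of the proof, with $2\sqrt{\ln 2}\approx 1.665$ being the 1D Foster--Krasikov constant and the remaining factor $\sqrt{14.35}\approx 3.79$ absorbing the linear-form expansion together with the Pigeonhole/shift overhead.
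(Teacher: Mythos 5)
Your proposal is a genuinely different route from the paper's, but it has a gap that you yourself flag and that I believe is fatal rather than fixable.

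The paper does \emph{not} compose a single one-variable peak polynomial $q$ with a single linear form $\phi_\alpha$. Instead it constructs $p=p_1p_2$ as a \emph{product} of two factors $p_i=f\circ g_i$, where $f$ is the Foster--Krasikov pulse from \Cref{f} and $g_1,g_2$ are two direction functions with linearly independent primitive direction vectors $\vu_1,\vu_2$ of norm $O(n^{1/3})$. This product structure is the whole point: since $\vu_1,\vu_2$ are linearly independent, the map $\vx\mapsto(g_1(\vx),g_2(\vx))$ is automatically injective on $E^2$, so there is no injectivity condition to pigeonhole over at all, and the sum $\sum_{\vx\in H\setminus\vh}|p(\vx)|$ factors through $\bigl(\sum_{i\geq 1}|f(i)|\bigr)^2<f(0)^2=p(\vh)$. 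The Pigeonhole Principle in the paper is used for something else entirely --- \Cref{two directions} shows that among the $N(R)>3|V(P)|$ supporting lines of the convex hull $P$ of $H$ with primitive normals of length $\leq R$, at least four pass through a common vertex $\vh\in V(P)$, and two of those can be chosen not to coincide with edges of $P$. This is what makes both $g_1$ and $g_2$ vanish only at $\vh$ and be strictly positive on $H\setminus\vh$, so that \Cref{f}(a) applies. The degree then comes out to $2\deg f\approx 2\cdot 2\sqrt{\sqrt{2}nR\cdot\ln 2}\approx 6.308\,n^{2/3}$, and this constant arises from the relation $R=\sqrt{3}(\pi n)^{1/3}$ coming from the primitive-point density estimate in \Cref{l(R)}--\Cref{V(P)}, not from any ``$c\approx 14.35$'' in an $\alpha$-optimization.

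As for the gap in your proposal: a single integer slope $\alpha$ of order $n^{1/3}$ cannot make $\phi_\alpha(x,y)=x+\alpha y$ injective on a dense $H$ such as $H=[n]^2$. Indeed $(x+\alpha,y)$ and $(x,y+1)$ collide whenever both lie in $[n]^2$, which happens for every $1\leq\alpha\leq n-1$; the only integer $\alpha$ that gives injectivity on $[n]^2$ is $\alpha\geq n$, which pushes $N$ to order $n^2$ and the degree to order $n$, making the construction vacuous. Your first suggested fix (irrational $\alpha$) restores injectivity but breaks the applicability of the Foster--Krasikov lemma, which controls $f$ only at integer points; controlling a one-variable polynomial at the $\sim n^2$ scattered real values $x+\alpha y$ would require an essentially different Chebyshev-type estimate, and it is not clear the resulting degree would still be $O(n^{2/3})$. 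Your second fix (decomposing $H$ into rows/columns) is not worked out, and on the face of it separating rows requires the polynomial to vanish between rows, reintroducing exactly the need for a second linear form. In short, the difficulty you name is not an overhead to be ``absorbed'' into a constant; it is the reason the paper goes to a two-factor product with geometrically chosen directions. Notice also that if your single-form plan had worked with $N\sim n^{4/3}$, you would get degree $\approx 1.665\,n^{2/3}$, strictly better than the paper's $6.308\,n^{2/3}$ --- that discrepancy is itself a warning that the plan as stated cannot be carried out.
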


\vspace{0.5cm}
\section{The polynomial method}\label{sec-pol}
%

In this paper, we only consider binary (hyper)matrices. For general $q$-ary matrices, the problem can be reduced to the binary case, see \cite{manvel1991reconstruction}.

For a fixed integer $d\geq 3$, we write $\{0,1\}^{n^{\times d}}$ instead of $\{0,1\}^{n\times \cdots \times n}$ for convenience. An element in $\{0,1\}^{n^{\times d}}$ is called an $n^{\times d}$-hypermatrix. For an $n^{\times d}$-hypermatrix $A$, denote by $\M_k(A)$  the $k$-deck of $A$, that is the multiset of the $\binom{n}{k}^d$ many $k^{\times d}$-subhypermatrices of $A$. Similarly, denote by $\M_k^\text{p}(A)$ the principal $k$-deck of $A$, that is, the multiset of the  $\binom{n}{k}$ principal $k^{\times d}$-subhypermatrices of $A$.

Regarding $\M_k$  as a map on $\{0,1\}^{n^{\times d}}$, we see that every hypermatrix $A\in \{0,1\}^{n^{\times d}}$ is uniquely determined by its $k$-deck $\M_k(A)$ if and only if the map $\M_k$ is injective.
For $k<l$, $M_l(A)$ determines $\M_k(A)$ since $\biguplus_{B\in M_l(A)}\M_k(B)=\binom{n-k}{l-k}^d\cdot \M_k(A)$ (here $\biguplus$ denotes the multiset union). Then an injective $\M_k$ implies an injective $M_l$. Hence it is interesting to find the smallest $k$ such that $\M_k$ is injective, denoted by $\kappa_d(n)$ for given $d$ and $n$. The above argument works for $\M_k^{\text{p}}$ in the principal $k$-deck problem, where we denote the smallest $k$ by $\kappa_d^\text{p}(n)$.

Generalizing the ideas of Krasikov and Roditty \cite{krasikov1997reconstruction}, we define the sums of subhypermatrices  in $\M_k(A)$ and in $\M_k^{\text{p}}(A)$ as
\[S_k(A)=\sum_{B\in \M_k(A)}B~\text{and}~ {S_k^{\text{p}}(A)}=\sum_{B\in \M_k^{\text{p}}(A)}B.\]
It is clear that $S_k(A)$ and $S_k^{\text{p}}(A)$ are $k^{\times d}$-hypermatrices.
%

Similar to $\M_k$, we regard $S_k$ as a map on $\{0,1\}^{n^{\times d}}$. By definition, $\M_k$ determines $S_k$ and then an injective $S_k$ implies an injective $\M_k$. That is, if all $n^{\times d}$-hypermatrices are uniquely determined by their $S_k$, then they are uniquely determined by their $k$-decks $\M_k$. The fact simplifies the $k$-deck problem via $S_k$ reconstruction. Similar arguments work for $S_k^{\text{p}}$ and $\M_k^{\text{p}}$.
%

In this paper, our main work is to prove the following result for $S_k$ ($S_k^{\text{p}}$) reconstruction, which immediately implies \Cref{d_reconstruction}.
\vspace{0.3cm}
\begin{theorem}\label{S_k reconstruction} For any fixed dimension $d\geq3$,
when $n$ is sufficiently large, all $n^{\times d}$-hypermatrices can be reconstructed by both of their $S_k$ and $S_k^{\text{p}}$ when $k\geq d^{\frac{3}{2}d}n^\frac{d}{d+1}$.
\end{theorem}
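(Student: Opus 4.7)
The plan is to reduce the theorem to \Cref{d_polynomial} via the polynomial method sketched in this section. Writing out $S_k(A)$ directly, for each index tuple $\mathbf{j}=(j_1,\ldots,j_d)\in[k]^d$ one has
\[
S_k(A)_{\mathbf{j}}=\sum_{\vh\in[n]^d}\left(\prod_{r=1}^{d}\binom{h_r-1}{j_r-1}\binom{n-h_r}{k-j_r}\right)A_{\vh},
\]
since the coefficient in front of $A_{\vh}$ counts the ways to complete $\vh$ to an ordered $k$-subset in each coordinate. In one variable the family $\{\binom{x-1}{j-1}\binom{n-x}{k-j}\}_{j=1}^{k}$ is a basis of the space of polynomials of degree at most $k-1$ (a quick triangular-matrix check), and tensoring produces a basis of the space of polynomials of degree at most $k-1$ in each of the variables $h_r$. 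Hence for any polynomial $p(\vx)$ of total degree at most $k-1$, the scalar $\sum_{\vh}p(\vh)A_{\vh}$ is a fixed rational linear combination of the entries of $S_k(A)$, and is therefore determined by $S_k(A)$.

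Assume, towards a contradiction, that $S_k(A)=S_k(B)$ with $A\neq B$, and set $H=\{\vh\in[n]^d:A_{\vh}\neq B_{\vh}\}\neq\emptyset$. By \Cref{d_polynomial} there is a polynomial $p$ with $\deg p\leq d^{\frac{3}{2}d}n^{d/(d+1)}-d\leq k-d\leq k-1$ and a point $\vh^{*}\in H$ satisfying $p(\vh^{*})>\sum_{\vh\in H\setminus\{\vh^{*}\}}|p(\vh)|$. The previous paragraph gives $\sum_{\vh}p(\vh)(A_{\vh}-B_{\vh})=0$, and since $A_{\vh}-B_{\vh}\in\{-1,0,1\}$ is supported exactly on $H$, isolating the $\vh^{*}$-term and invoking the triangle inequality yields
\[
p(\vh^{*})\leq\left|\sum_{\vh\in H\setminus\{\vh^{*}\}}p(\vh)(A_{\vh}-B_{\vh})\right|\leq\sum_{\vh\in H\setminus\{\vh^{*}\}}|p(\vh)|,
\]
which contradicts the peak inequality. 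Hence the map $S_k$ is injective.

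The principal-deck case follows the same template, but now requires the sharper bound $\deg p\leq k-d$ rather than $k-1$. Each entry $S_k^{\mathrm{p}}(A)_{\mathbf{j}}$ equals $\sum_{\vh}g^{\mathrm{p}}_{\mathbf{j}}(\vh)A_{\vh}$, where $g^{\mathrm{p}}_{\mathbf{j}}$ is supported on $\vh$ whose coordinate-equality pattern matches that of $\mathbf{j}$; on the all-distinct stratum, after permuting so that $j_{\sigma(1)}<\cdots<j_{\sigma(d)}$, the coefficient becomes the gap-product $\binom{h_{\sigma(1)}-1}{j_{\sigma(1)}-1}\binom{h_{\sigma(2)}-h_{\sigma(1)}-1}{j_{\sigma(2)}-j_{\sigma(1)}-1}\cdots\binom{n-h_{\sigma(d)}}{k-j_{\sigma(d)}}$, which has total degree exactly $k-d$ in $\vh$. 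Stratifying $\sum_{\vh}p(\vh)A_{\vh}$ by which coordinates of $\vh$ coincide and running the triangular basis argument on each stratum shows that every polynomial $p$ of total degree at most $k-d$ is admissible---exactly the bound \Cref{d_polynomial} provides, and the reason for the ``$-d$'' in its statement. The same peak-polynomial argument then closes the principal case. The real obstacle of the theorem is thus the construction of the peak polynomial in \Cref{d_polynomial}; the reduction above is short, with the only nontrivial wrinkle being the coincidence-pattern bookkeeping needed for the principal deck.
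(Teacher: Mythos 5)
Your proposal is correct and follows essentially the same route as the paper: the nonsymmetric case via the tensor-product basis $\beta_{u_1}(x_1)\cdots\beta_{u_d}(x_d)$ (which is the content of \Cref{nonsym}), the principal case via stratification by coincidence pattern with the gap-product basis and the resulting $k-d$ degree budget (which is the content of \Cref{sym} and \Cref{cor-sy}), and then the peak-polynomial of \Cref{d_polynomial} to force $\sum_{\vh} p(\vh)(A_{\vh}-B_{\vh})\neq 0$. The only cosmetic difference is that you phrase the final step as a proof by contradiction while the paper argues directly that $S_k(A)\neq S_k(B)$; the mathematical content is identical.
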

\vspace{0.5cm}

The proof of Theorem~\ref{S_k reconstruction} will be given in the end of this section. Similar to \cite[Theorem 3.1]{kos2009reconstruction}, the following result shows that the bounds of $k$ in \Cref{S_k reconstruction} are close to be optimal by simple counting.
\vspace{0.3cm}
\begin{lemma}\label{S_k lower bound}
Given the dimension $d\geq3$, if $k\leq\frac{n^{d/(d+1)}}{\sqrt[d+1]{d\log_{2}(n+1)}}$, then there exist hypermatrices that can not be reconstructed by their $S_k$ or $S_k^{\text{p}}$. That is, there exist $A,A',B,B'\in\{0,1\}^{n^{\times d}}$ such that $A\neq A'$ but $S_k(A)=S_k(A')$, and $B\neq B'$ but $S_k^{\text{p}}(B)=S_k^{\text{p}}(B')$.
\end{lemma}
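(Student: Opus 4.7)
\medskip

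The plan is a clean pigeonhole/counting argument: bound the number of distinct values the map $S_k$ (resp.\ $S_k^{\mathrm{p}}$) can take, and compare it against $|\{0,1\}^{n^{\times d}}| = 2^{n^d}$.

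First, I would analyze the range of $S_k$. Since $S_k(A)$ is a $k^{\times d}$-hypermatrix whose $(i_1,\dots,i_d)$-entry equals the sum, over the $\binom{n}{k}^d$ subhypermatrices $B \in \M_k(A)$, of the corresponding entry of $B$, each entry of $S_k(A)$ is an integer between $0$ and $\binom{n}{k}^d$. Hence the number of possible values in the image of $S_k$ is at most
\[
\left(\binom{n}{k}^d + 1\right)^{k^d} \le (n+1)^{d k^{d+1}},
\]
using $\binom{n}{k} \le n^k \le (n+1)^k - 1$. For the principal deck, $S_k^{\mathrm{p}}(A)$ has each entry bounded by $\binom{n}{k}$, giving at most $(n+1)^{k^{d+1}}$ possible values, which is smaller, so the same bound works uniformly.

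Next, I would invoke pigeonhole: if
\[
2^{n^d} > (n+1)^{d k^{d+1}},
\]
then $S_k$ (and similarly $S_k^{\mathrm{p}}$) cannot be injective on $\{0,1\}^{n^{\times d}}$, so there exist distinct $A, A'$ with $S_k(A) = S_k(A')$, and distinct $B, B'$ with $S_k^{\mathrm{p}}(B) = S_k^{\mathrm{p}}(B')$. Taking logarithms, the inequality becomes $n^d > d k^{d+1} \log_2(n+1)$, i.e.\
\[
k < \frac{n^{d/(d+1)}}{\sqrt[d+1]{d\log_2(n+1)}},
\]
which is exactly the hypothesis of the lemma.

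Finally I would remark that there is no real obstacle here; this is a direct counting bound, and the only mild subtlety is making sure the same estimate covers both $S_k$ and $S_k^{\mathrm{p}}$ (which it does, since the principal version has a strictly smaller range). Compared with \Cref{S_k reconstruction}, this shows that the exponent $d/(d+1)$ in the main theorem is optimal up to logarithmic factors and constants depending on $d$.
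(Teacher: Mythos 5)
Your proposal is correct and takes essentially the same approach as the paper: bound the range of $S_k$ (and $S_k^{\text{p}}$) entrywise, count the possible values, and compare against $2^{n^d}$ via pigeonhole. One small remark: the hypothesis uses $k \le \cdots$, not $k < \cdots$, and when $k$ equals the bound exactly you get only $(n+1)^{dk^{d+1}} \le 2^{n^d}$ (not strict); the strict inequality needed for pigeonhole then comes from the earlier step $\bigl(\binom{n}{k}^d + 1\bigr)^{k^d} < (n+1)^{dk^{d+1}}$, which the paper also relies on and which your chain of estimates does in fact deliver.
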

\begin{proof}
 For an arbitrary hypermatrix $A\in\{0,1\}^{n^{\times d}}$, $S_k(A)$ is the sum of $\binom{n}{k}^d$ subhypermatrices, so each entry in $S_k(A)$ is a nonnegative integer with value at most $\binom{n}{k}^d$. Hence
\[|\{S_k(A):A\in\{0,1\}^{n^{\times d}}\}|\leq\left(\binom{n}{k}^d+1\right)^{k^d}\leq(n^{dk}+1)^{k^d}<(n+1)^{dk^{d+1}}\leq2^{n^d}=|\{0,1\}^{n^{\times d}}|.\]
There are fewer possible choices of $S_k(A)$ than binary $n^{\times d}$-hypermatrices, so the map $S_k$ cannot be injective.

 Similarly for the symmetric case, each entry of $S_k^{\text{p}}(A)$ is at most $\binom{n}{k}$ and therefore
\[|\{S_k^{\text{p}}(A):A\in\{0,1\}^{n^{\times d}}\}|\leq\left(\binom{n}{k}+1\right)^{k^d}\leq(n^k+1)^{k^d}\leq(n+1)^{k^{d+1}}<2^{n^d}=|\{0,1\}^{n^{\times d}}|.\]
\end{proof}

\vspace{0.5cm}

\Cref{S_k reconstruction} and \Cref{S_k lower bound} show that the smallest value of $k$, for which
{$S_k$ or $S_k^{\text{p}}$ determines all $n^{\times d}$-hypermatrices,} is between $\Omega(\frac{n^{d/(d+1)}}{\sqrt[d+1]{\log_{2}n}})$ and $O(n^{d/(d+1)})$. This means that the exponent $d/(d+1)$ is sharp in the $S_k$ ($S_k^{\text{p}}$) reconstruction problem. However, the bound for $S_k$ ($S_k^{\text{p}}$) in Lemma~\ref{S_k lower bound} does not work for $\M_k$ ($\M_k^{\text{p}}$) in general.
%

Next, we show how to turn the $S_k$ ($S_k^{\text{p}}$) reconstruction problem, that is the proof of \Cref{S_k reconstruction}, to a polynomial construction problem. We  discuss the two cases  $S_k$ and $S_k^{\text{p}}$ separately.

For a $d$-variable polynomial $p(x_1,\ldots,x_d)$, denote $\deg p$ the total degree of $p$, and $\deg_l p$ the local degree of $p$, that is the maximum degree among all variables.
For example, let $p=x_1^2x_2^2+x_2^3$, then $\deg p=4$ and $\deg_l p=3$. In this paper, we only consider polynomials with real coefficients.

\subsection{The nonsymmetric case}
The nonsymmetric case is a natural $d$-dimensional analogue of \cite[Lemma 2.1]{kos2009reconstruction}.

For $1\leq u\leq k$, define a polynomial $\beta_u(x):=\binom{x-1}{u-1}\binom{n-x}{k-u}$, then $\deg \beta_u(x)=k-1$. It is known that $\beta_u(x)$, $1\leq u\leq k$ form a basis of the linear space of all polynomials with degree less than $k$ \cite[Lemma 2.3]{kos2009reconstruction}. Denote $\vx:=(x_1,\ldots,x_d)$. For any $\vu=(u_1,\ldots,u_d)\in[k]^d$, define  $\beta_{\vu}(\vx):=\beta_{u_1}(x_1)\times\cdots\times\beta_{u_d}(x_d)$. Then $\beta_{\vu}(\vx), \vu\in[k]^d$ form a basis of the linear space of all $d$-variable polynomials with local degree less than $k$.

\vspace{0.3cm}
\begin{lemma}\label{nonsym}
Let $A,B\in \{0,1\}^{n^{\times d}}$ be two arbitrary hypermatrices and let $D=A-B=(d_{i_1\cdots i_d})_{1\leq i_1,\ldots,i_d\leq n}\triangleq(d_{\vi})_{\vi\in [n]^d}$ be their difference. Then $S_k(A)=S_k(B)$ if and only if
\begin{equation}\label{eq-p}
  \sum_{\vi\in[n]^d}p(\vi)\cdot d_{\vi}=0
\end{equation}
for any polynomial $p(\vx)$ with local degree $\deg_l p\leq k-1$.

\end{lemma}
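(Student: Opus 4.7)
The plan is to expand each entry of $S_k(A)$ as a linear functional of $A$, observe that these functionals are precisely evaluation against the basis polynomials $\beta_\vu$, and then upgrade ``vanishes on the basis'' to ``vanishes on every polynomial of local degree at most $k-1$'' via the basis property already recalled in the text.

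First I would compute the $\vu$-th entry of $S_k(A)$ for $\vu=(u_1,\ldots,u_d)\in[k]^d$. A $k^{\times d}$-subhypermatrix of $A$ is obtained by choosing, independently for each direction $j\in[d]$, a size-$k$ subset $I_j\subseteq[n]$ and reading off $A$ at the Cartesian product $I_1\times\cdots\times I_d$; its $\vu$-th entry is $a_\vi$, where $i_j$ is the $u_j$-th smallest element of $I_j$. Hence an entry $a_\vi$ of $A$ contributes to position $\vu$ of a subhypermatrix precisely when, in each direction $j$, we keep index $i_j$ and choose $u_j-1$ kept indices below $i_j$ and $k-u_j$ above it; the number of such subsets in direction $j$ is $\binom{i_j-1}{u_j-1}\binom{n-i_j}{k-u_j}=\beta_{u_j}(i_j)$, and the choices across directions are independent. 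Multiplying over $j$ yields
\[
S_k(A)_\vu \;=\; \sum_{\vi\in[n]^d} a_\vi\,\beta_\vu(\vi), \qquad \beta_\vu(\vx):=\prod_{j=1}^d \beta_{u_j}(x_j).
\]
Subtracting the same identity for $B$, I obtain that $S_k(A)=S_k(B)$ is equivalent to $\sum_{\vi\in[n]^d} d_\vi\,\beta_\vu(\vi)=0$ for every $\vu\in[k]^d$.

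Next I would pass from the basis to arbitrary $p$ with $\deg_l p\leq k-1$. The excerpt already records that $\{\beta_u(x)\}_{u\in[k]}$ is a basis of univariate polynomials of degree at most $k-1$; tensoring across the $d$ coordinates shows that $\{\beta_\vu(\vx)\}_{\vu\in[k]^d}$ is a basis of the space of $d$-variable polynomials of local degree at most $k-1$ (a dimension count of $k^d$ on both sides confirms nothing is lost). Consequently, the linear functional $p\mapsto\sum_\vi d_\vi\,p(\vi)$ vanishes on every such $p$ if and only if it vanishes on each $\beta_\vu$, which is exactly the condition derived in the previous step. This proves both implications of the lemma.

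The only step requiring real thought is the combinatorial identity for $S_k(A)_\vu$, and even this is not a genuine obstacle: because deletions in the $d$ coordinate directions are performed independently, the count factorizes as a product of the one-dimensional Krasikov--Roditty expressions $\beta_{u_j}(i_j)$, and the rest of the argument is linear algebra.
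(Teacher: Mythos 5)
Your proof is correct and follows essentially the same route as the paper: you establish the key counting identity $S_k(D)_\vu=\sum_\vi \beta_\vu(\vi)\,d_\vi$ by the same independence-of-directions argument, then pass between the basis $\{\beta_\vu\}_{\vu\in[k]^d}$ and the full space of polynomials of local degree at most $k-1$. The paper spells out the two implications separately by expanding $p$ in the $\beta_\vu$ basis and by specializing to $p=\beta_\vu$, whereas you phrase it as a linear functional vanishing on the whole space iff it vanishes on a basis, but this is just a tidier packaging of the same linear algebra.
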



\begin{proof} By definition, the $k^{\times d}$-hypermatrix $S_k(D)=S_k(A)-S_k(B)$. We first claim that for each $\vu=(u_1,\ldots,u_d)\in[k]^d$, the $\vu$-entry in $S_k(D)$ can be expressed as
\begin{equation}\label{eq-sk}
  \big(S_k(D)\big)_{\vu}=\sum_{\vi\in[n]^d}\beta_{\vu}(\vi)d_{\vi}.
\end{equation}
To prove this claim, it suffices to show that each  $d_{i_1\cdots i_d}$ occurs $\beta_{u_1}(i_1)\cdots\beta_{u_d}(i_d)$ times in $\M_k(D)$ as the $(u_1,\ldots,u_d)$-entry. Indeed, when we delete $n-k$ rows of the $j$-th dimension of $D$, there are $\beta_{u_j}(i_j)=\binom{i_j-1}{u_j-1}\binom{n-i_j}{k-u_j}$ choices such that the $i_j$-th row of $D$ becomes the $u_j$-th row of the subhypermatrix. By the independence of deletions in each dimension, each entry $d_{i_1\cdots i_d}$ occurs $\beta_{u_1}(i_1)\cdots\beta_{u_d}(i_d)$ times in $\M_k(D)$ as the $(u_1,\ldots,u_d)$-entry. This proves the claim.

Suppose that $S_k(A)=S_k(B)$, that is $ \big(S_k(D)\big)_{\vu}=0$ for any $\vu\in[k]^d$. Let $p(\vx)$ be an arbitrary polynomial with $\deg_l p\leq k-1$. Then there exist real numbers $\lambda_{\vu}, \vu\in[k]^d$ such that
\[p(\vx)=\sum_{\vu\in[k]^d}\lambda_{\vu}\beta_{\vu}(\vx).\]
Hence,
\begin{align*}
&\sum_{\vi\in[n]^d}p(\vi)\cdot d_{\vi}=\sum_{\vi\in[n]^d}\left(\sum_{\vu\in[k]^d}\lambda_{\vu}\beta_{\vu}(\vi)\right)d_{\vi}\\
=&\sum_{\vu\in[k]^d}\lambda_{\vu}\left(\sum_{\vi\in[n]^d}\beta_{\vu}(\vi)d_{\vi}\right)
\overset{(\ref{eq-sk})}{=}\sum_{\vu\in[k]^d}\lambda_{\vu}\cdot \left(S_k(D)\right)_{\vu}=0.
\end{align*}

Now we prove the other way.  Suppose Eq. (\ref{eq-p}) is true for any polynomial $p$ with local degree less than $k$. Then applying Eq. (\ref{eq-p}) with the polynomial $\beta_{\vu}(\vx)=\beta_{u_1}(x_1)\times \cdots \times \beta_{u_d}(x_d)$, which has local degree $k-1$, we have
\[\left(S_k(D)\right)_{\vu}\overset{(\ref{eq-sk})}{=}\sum_{\vi\in[n]^d}\beta_{\vu}(\vi)d_{\vi}\overset{(\ref{eq-p})}{=}0\]
for any $\vu\in [k]^d$. So $S_k(D)=0$, i.e., $S_k(A)=S_k(B)$.

\end{proof}

\subsection{The symmetric case}
The symmetric case is a $d$-dimensional analogue of \cite[Lemma 2.4]{kos2009reconstruction}, but more complicated.

We use ``$\twoheadrightarrow$" to mean a map is surjective. For $\tau:[d]\twoheadrightarrow[r]~(1\leq r\leq d)$, let $I_\tau\subset [n]^d$ consist of $n$-tuples which have the same relative orders as all of the digits in $(\tau(1),\ldots,\tau(d))$.
That is, $(i_1,\ldots,i_d)\in I_\tau$ if  for any $1\leq j_1<j_2\leq d$, we have $i_{j_1}>i_{j_2}$ if and only if $\tau(j_1)>\tau(j_2)$, and so are the cases $i_{j_1}<i_{j_2}$ and $i_{j_1}=i_{j_2}$. For example, when $n=4,d=3,r=2$ and $\tau([3])=(\tau(1),\tau(2),\tau(3))=(1,2,1)$, we have $I_\tau=\{(1,2,1),(1,3,1),(1,4,1),(2,3,2),(2,4,2),(3,4,3)\}\subset [4]^3$. It is easy to see that for two distinct $\tau_1:[d]\twoheadrightarrow[r_1]$ and $\tau_2:[d]\twoheadrightarrow[r_2]$ ($r_1=r_2$ is allowable), $I_{\tau_1}\cap I_{\tau_2}=\emptyset$ and
\begin{equation}\label{eq-nd}
  [n]^d=\cup_{r=1}^d\cup_{\tau:[d]\twoheadrightarrow[r]}I_\tau,
\end{equation}
i.e., $[n]^d$ is a disjoint union of all $I_\tau$.
%
\vspace{0.3cm}
\begin{lemma}\label{sym}
Let $A,B\in \{0,1\}^{n^{\times d}}$ be two arbitrary hypermatrices and let $D=A-B=(d_{\vi})_{\vi\in[n]^d}$ be their difference. Then $S_k^{\text{p}}(A)=S_k^{\text{p}}(B)$ if and only if for any integer $r$ such that $1\leq r\leq d$, we have
\begin{equation}\label{eq-it}
  \sum_{\vi\in I_\tau}p(\vi)\cdot d_{\vi}=0
\end{equation}
 for any surjective $\tau:[d]\twoheadrightarrow[r]$ and for  any polynomial $p$ with total degree $\deg p\leq k-r$.

\end{lemma}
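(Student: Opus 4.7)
The plan is to mimic the proof of \Cref{nonsym}, adapted to the principal setting where a single $k$-subset $S\subset[n]$ simultaneously governs all $d$ coordinates. The new feature is that an entry $d_{\vi}$ can appear as the $\vu$-entry of a principal $k$-subhypermatrix only when $\vi$ and $\vu$ have the same order type, which is exactly why the partition (\ref{eq-nd}) of $[n]^d$ into the sets $I_\tau$ organizes the calculation.

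First I would compute $\bigl(S_k^{\text{p}}(D)\bigr)_{\vu}$ explicitly. Fix $\vu\in[k]^d$ of order type $\tau:[d]\twoheadrightarrow[r]$ with distinct values $u_{(1)}<\cdots<u_{(r)}$. For $\vi\in I_\tau$ with distinct values $i_{(1)}<\cdots<i_{(r)}$, counting the $k$-subsets $S=\{s_1<\cdots<s_k\}\subset[n]$ with $s_{u_{(l)}}=i_{(l)}$ for every $l$ amounts to distributing the remaining $k-r$ elements of $S$ among the $r+1$ gaps carved out by the $i_{(l)}$'s, giving a product of binomial coefficients. Thus
\[
\bigl(S_k^{\text{p}}(D)\bigr)_{\vu}=\sum_{\vi\in I_\tau}\gamma_{\vu}(\vi)\cdot d_{\vi},\qquad \gamma_{\vu}(\vi):=\prod_{l=0}^{r}\binom{i_{(l+1)}-i_{(l)}-1}{u_{(l+1)}-u_{(l)}-1},
\]
with boundary conventions $i_{(0)}=u_{(0)}=0$, $i_{(r+1)}=n+1$, $u_{(r+1)}=k+1$. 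Viewed as a function of $(i_{(1)},\ldots,i_{(r)})$, each $\gamma_{\vu}$ is a polynomial of total degree $\sum_{l=0}^{r}(u_{(l+1)}-u_{(l)}-1)=k-r$.

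The main technical point is a Kronecker-delta identity. Evaluating $\gamma_{\vu}$ at $(i_{(1)},\ldots,i_{(r)})=(u'_{(1)},\ldots,u'_{(r)})$ for a second tuple $\vu'$ of type $\tau$, each factor $\binom{u'_{(l+1)}-u'_{(l)}-1}{u_{(l+1)}-u_{(l)}-1}$ vanishes unless $u'_{(l+1)}-u'_{(l)}\geq u_{(l+1)}-u_{(l)}$. Summing these telescoping inequalities over $l=0,\ldots,r$ yields $k+1\geq k+1$, so each must be tight; hence $\vu'=\vu$, and the product then equals $1$. This gives $\gamma_{\vu}(\vu')=\delta_{\vu,\vu'}$, so the $\binom{k}{r}$ polynomials $\{\gamma_{\vu}\}$ are linearly independent. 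Since the space of polynomials in $r$ variables of total degree at most $k-r$ also has dimension $\binom{k}{r}$, the $\gamma_{\vu}$ form a basis of that space.

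Finally, I would assemble the equivalence. Any $d$-variable polynomial $p$ with $\deg p\leq k-r$ restricts on $I_\tau$ to a polynomial in $(i_{(1)},\ldots,i_{(r)})$ of total degree $\leq k-r$, and conversely every such polynomial lifts to some $p$ by duplicating variables according to $\tau$. Together with the basis property of $\gamma_{\vu}$, this shows that $\bigl(S_k^{\text{p}}(D)\bigr)_{\vu}=0$ for all $\vu$ of type $\tau$ is equivalent to (\ref{eq-it}) holding for that $\tau$ and every admissible $p$; ranging over all $r$ and $\tau$ and invoking the disjoint union (\ref{eq-nd}) gives the full lemma. The only real obstacle is the identity $\gamma_{\vu}(\vu')=\delta_{\vu,\vu'}$; everything else is bookkeeping that parallels the nonsymmetric case.
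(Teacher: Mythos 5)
Your proposal is correct and follows the same overall structure as the paper's proof in the appendix: define the counting polynomials $\gamma_{\vu}$, show $\bigl(S_k^{\text{p}}(D)\bigr)_{\vu}=\sum_{\vi\in I_\tau}\gamma_{\vu}(\vi)\,d_{\vi}$, prove that the $\gamma_{\vu}$ with $\vu$ ranging over $U_\tau$ form a basis of the $\binom{k}{r}$-dimensional space of $r$-variable polynomials of total degree at most $k-r$, and then translate back via the partition of $[n]^d$ into the sets $I_\tau$. The one place you diverge is in establishing the basis property: the paper argues by picking the lexicographically first nonzero coefficient and invoking a triangularity-type vanishing of the other $\gamma$'s at that evaluation point, which requires one to track carefully how lex order on $(u_1,\ldots,u_d)$ interacts with the gap differences $u_{(l+1)}-u_{(l)}$. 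Your Kronecker-delta identity $\gamma_{\vu}(\vu')=\delta_{\vu,\vu'}$, proved by the telescoping-sum argument forcing all the gap inequalities to be equalities, is sharper and cleaner: it shows that the evaluation matrix on $U_\tau$ is literally the identity, so the $\gamma_{\vu}$ are in fact the dual basis to evaluation at the points of $U_\tau$. This avoids the bookkeeping the paper leaves implicit and gives linear independence in one stroke. The remaining steps (the gap-counting derivation of $\gamma_{\vu}$, the degree count $\sum_{l}(u_{(l+1)}-u_{(l)}-1)=k-r$, and the correspondence between $d$-variable polynomials of degree $\leq k-r$ restricted to $I_\tau$ and $r$-variable polynomials of the same degree bound) all check out.
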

\vspace{0.3cm}

The proof of \Cref{sym} is similar to that of \Cref{nonsym}, hence we move it to Appendix~\ref{app-1}.  Note that the summation in Eq. (\ref{eq-it}) is restricted on $I_\tau$. By Eq. (\ref{eq-nd}), it is immediate to have the following simpler version  of \Cref{sym}, which will be applied to prove \Cref{S_k reconstruction}.

\vspace{0.3cm}
\begin{corollary}\label{cor-sy}
Let $A,B\in \{0,1\}^{n^{\times d}}$ and $D=A-B=(d_{\vi})_{\vi\in[n]^d}$. If $S_k^{\text{p}}(A)=S_k^{\text{p}}(B)$, then
\begin{equation*}
  \sum_{\vi\in [n]^d}p(\vi)\cdot d_{\vi}=0
\end{equation*}
for any polynomial $p(\vx)$ with total degree $\deg p\leq k-d$.
\end{corollary}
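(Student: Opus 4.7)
The plan is to deduce the corollary directly from \Cref{sym} combined with the partition identity (\ref{eq-nd}). First I would split the full sum over $[n]^d$ according to the disjoint decomposition $[n]^d=\bigcup_{r=1}^{d}\bigcup_{\tau:[d]\twoheadrightarrow[r]}I_\tau$, writing, for any polynomial $p(\vx)$,
$$\sum_{\vi\in[n]^d}p(\vi)\cdot d_{\vi}=\sum_{r=1}^{d}\sum_{\tau:[d]\twoheadrightarrow[r]}\sum_{\vi\in I_\tau}p(\vi)\cdot d_{\vi}.$$
Under the hypothesis $S_k^{\text{p}}(A)=S_k^{\text{p}}(B)$, \Cref{sym} asserts that every inner sum $\sum_{\vi\in I_\tau}p(\vi)\cdot d_{\vi}$ vanishes whenever $\deg p\le k-r$.

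Next I would invoke the degree assumption $\deg p\le k-d$. Since $1\le r\le d$ throughout the decomposition, we have $k-d\le k-r$ for every value of $r$, so the degree hypothesis of \Cref{sym} is satisfied uniformly across every $r$ and every surjection $\tau:[d]\twoheadrightarrow[r]$. Consequently, each of the innermost sums is zero, and their total is also zero, which is exactly the stated identity. There is no real obstacle here; the corollary is just \Cref{sym} repackaged through the weakest (and hence most uniform) degree bound $k-d$ together with the disjointness of the family $\{I_\tau\}$, which is why it is presented merely as a corollary rather than a lemma.
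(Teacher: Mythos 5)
Your argument is correct and follows exactly the paper's intended route: decompose $[n]^d$ via Eq.~(\ref{eq-nd}) into the disjoint pieces $I_\tau$, observe that $\deg p\le k-d\le k-r$ for every $1\le r\le d$, apply \Cref{sym} to kill each piece, and sum. The paper states this in one sentence as ``immediate''; you have simply spelled out the same reasoning.
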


\subsection{Proof of \Cref{S_k reconstruction}}
Combining  \Cref{d_polynomial} with \Cref{nonsym} and \Cref{cor-sy}, we can prove \Cref{S_k reconstruction} as follows, and thus prove \Cref{d_reconstruction}.

\vspace{0.3cm}
\begin{proof}[Proof of \Cref{S_k reconstruction}]

Let $k\geq d^{\frac{3}{2}d}n^\frac{d}{d+1}$. Given any two distinct $A,B\in \{0,1\}^{n^{\times d}}$, let $D=A-B=(d_{\vi})_{\vi\in[n]^d}$. Consider the set $H=\{\vi\in [n]^d:d_{\vi}\neq 0\}$. By \Cref{d_polynomial}, there exists a point $\mathbf{h}=(h_1,\ldots,h_d)\in H$ and a polynomial $p(\vx)$ with total degree $\deg p\leq k-d$ such that \begin{equation*}
p(\mathbf{h})>\sum_{\mathbf{x}\in H\setminus\mathbf{h}}|p(\mathbf{x})|.
\end{equation*}  Then
\begin{align*}
&\left|\sum_{\vi\in[n]^d}p(\vi)d_{\vi}\right|=\left|\sum_{\vi\in H}p(\vi)d_{\vi}\right|
\geq p(\mathbf{h})-\sum_{\vi\in H\setminus\mathbf{h}}|p(\vi)|>0.
\end{align*}
Hence, $\sum_{\vi\in[n]^d}p(\vi)d_{\vi}\neq 0$. By \Cref{nonsym} and \Cref{cor-sy}, this implies $S_k(A)\neq S_k(B)$ and $S_k^{\text{p}}(A)\neq S_k^{\text{p}}(B)$, respectively. That is, all $n^{\times d}$-hypermatrices can be reconstructed by their $S_k$ ($S_k^{\text{p}}$).
\end{proof}

The remaining of this paper is devoted to prove  \Cref{d_polynomial}, that is, to construct a $d$-variable polynomial with a peak value whose total degree is $O(n^\frac{d}{d+1})$.
\vspace{0.5cm}
\section{Construction sketch of our polynomial}

%

In this section, we give the idea of {the construction} of our required polynomial. Consider the $d$-dimensional Euclidean space $E^d$. A $d$-sphere denotes the set of points in $E^d$ that are situated at a constant distance from a fixed point. A $(d-1)$-hyperplane  is a translate of a $(d-1)$-dimensional subspace of $E^d$. 
{An $r$-lattice is a lattice of rank $r$.}

In the construction, a key ingredient is 
{the so-called direction function}.  A function $g: E^d \rightarrow \bbR$ is called a \emph{direction function} if $g(\vx)$ is a linear function of the form
\[g(\vx)=\va\cdot(\vx-\vh)=a_1(x_1-h_1)+a_2(x_2-h_2)+\cdots+a_d(x_d-h_d)\]
for some nonzero $\va=(a_1,a_2,\ldots,a_d)$ and $\vh=(h_1,h_2,\ldots,h_d)\in E^d$. We observe that $\deg g(\vx)=1$ and the value of $g(\vx)$ increases along the direction of $\va$, i.e., each $(d-1)$-hyperplane orthogonal to $\va$ is a level set of $g(\vx)$, so we call $\va$ the \emph{direction vector} of $g$. If we compose a direction function into a univariate function $f$, then the value of $f\circ g(\vx)$ changes along the direction of $\va$, where $g(\vx)$ plays a role of a direction.

In \cite{kos2009reconstruction}, K\'{o}s et al. set a bivariate polynomial as $p(x,y)=p_0(x,y)p_1(x,y)$ with $p_i(x,y)=f_i\circ g_i(x,y)$, $i=0,1$, where $g_i$ is a direction function and $f_i$ is a univariate polynomial. In their construction, $g_i(\vx)=\vu_i\cdot (\vx-\vh)$, $i=0,1$ for some carefully chosen $\vh$ and linearly independent vectors $\vu_0$ and $\vu_1$ in $E^2$, so that the map $(x,y)\mapsto(g_0(x,y),g_1(x,y))$ is injective. This allows them to estimate the sum as
\begin{equation}\label{eq-p2}
  \sum_{\vx\in H}|p(\vx)|=\sum_{\vx\in H}|f_0\circ g_0(\vx)||f_1\circ g_1(\vx)|\leq \sum_{z\in g_0(H)}|f_0(z)|\cdot\sum_{z\in g_1(H)}|f_1(z)|,
\end{equation}
for any set $H\subset [n]^2$, which degenerates the polynomial into the univariate case so that one can construct it by applying the polynomials for sequence reconstruction.

For the two univariate polynomials $f_i$,  K\'{o}s et al.  \cite{kos2009reconstruction} chose them as in the following lemmas which behave like a pulse, taking a large value somewhere and decaying quickly elsewhere. This property is useful for showing that the polynomial $p$ has a peak value and a low degree.\\
\vspace{0.5cm}
\begin{lemma}\label{f1}(\cite[Lemma 3.4]{kos2009reconstruction} )
For arbitrary reals $b_0,m_0>0$, there exists a polynomial $f(x)$ with real coefficients such that
\begin{itemize}
  \item[(a)]$f(0)=m_0$,
  \item[(b)]$|f(x)|\leq\min(m_0,\frac{1}{x^2})$ for all $x\in (0,b_0]$, and
  \item[(c)]$\deg f<\sqrt{\pi}\sqrt{b_0}\sqrt[4]{m_0}+2$.
\end{itemize}
\end{lemma}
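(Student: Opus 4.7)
My plan is to construct $f$ explicitly as a square $f(x) = u(x)^2$, where $u$ is a polynomial derived from a Fej\'er-type kernel transplanted from the unit circle to $[0, b_0]$ via a quadratic change of variable, and then (if necessary) to upgrade the kernel to shave the constant in (c).

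First I would reduce the problem to constructing a single polynomial $u$ of degree roughly $\tfrac{1}{2}\sqrt{\pi}\sqrt{b_0}\, m_0^{1/4}$ with $u(0) = \sqrt{m_0}$ and $|u(x)| \leq \min\{\sqrt{m_0},\, 1/x\}$ on $(0, b_0]$; squaring then gives (a) and (b) simultaneously. The desired profile of $u$---value $\sqrt{m_0}$ at $0$ and envelope $1/x$ elsewhere---is exactly the profile of a Fej\'er-squared kernel after a trigonometric-to-algebraic substitution.

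Concretely, under the substitution $\cos\phi = 1 - 2x/b_0$ (so $\phi = 0 \leftrightarrow x = 0$ and $\phi = \pi \leftrightarrow x = b_0$), the squared Fej\'er kernel
\[
 K_n(\phi)^2 \;:=\; \frac{1}{n^2}\left(\frac{\sin(n\phi/2)}{\sin(\phi/2)}\right)^{\!4}
\]
is a nonnegative trigonometric polynomial of degree $2(n-1)$ in $\phi$ that in fact depends only on $\cos\phi$ (since $\sin(n\phi/2)/\sin(\phi/2) = U_{n-1}(\cos(\phi/2))$ and $U_{n-1}^2$ is an even polynomial), so it becomes a polynomial in $x$ of degree $2(n-1)$. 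The key identity $\sin^2(\phi/2) = x/b_0$ transplants the trigonometric decay $K_n^2 \leq 1/(n^2 \sin^4(\phi/2))$ to the algebraic decay
\[
 K_n(\phi)^2 \;\leq\; \frac{b_0^2}{n^2\, x^2},\qquad x \in (0, b_0].
\]
Setting $f(x) := (m_0/n^2)\, K_n(\phi)^2$ then gives a polynomial of degree $2(n-1)$ with $f(0) = m_0$, $f(x) \geq 0$, and $f(x) \leq m_0\, b_0^2/(n^4\, x^2)$; the bound $f(x) \leq m_0$ follows from nonnegativity combined with the fact that $K_n^2$ attains its absolute maximum at $\phi = 0$. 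Choosing $n$ just above $\sqrt{b_0}\,m_0^{1/4}$ forces the envelope below $1/x^2$, establishing (a), (b) with $\deg f = 2(n-1) \leq 2\sqrt{b_0}\,m_0^{1/4} - 2$.

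The main obstacle is to sharpen the multiplicative constant in (c) from the value $2$ obtained above down to the stated $\sqrt{\pi} \approx 1.77$. I would attempt this by replacing the Fej\'er-squared kernel with the true extremal polynomial for the weighted sup-norm problem ``maximize $p(0)$ subject to $\max(1, m_0 x^2)\,|p(x)| \leq 1$ on $[0, b_0]$'': after the rescaling $y = x\sqrt{m_0}$ this becomes a weighted Chebyshev problem on $[0, b_0 \sqrt{m_0}]$ with weight $\max(1, y^2)$, whose extremal polynomial is of Jacobi type. A Gaussian-scaling asymptotic analysis as $n \to \infty$, in which the integral $\int_0^{\infty} e^{-t^2}\,dt = \sqrt{\pi}/2$ appears naturally, produces the factor $\sqrt{\pi}/2$ in front of $\sqrt{b_0}\,m_0^{1/4}$; the $+2$ term in (c) absorbs the rounding to an integer degree.
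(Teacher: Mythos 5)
Your Fej\'er-squared construction is correct and does establish (a) and (b) cleanly: under $\sin^2(\phi/2)=x/b_0$ one has $\bigl|\sin(n\phi/2)/\sin(\phi/2)\bigr|\le\min\bigl(n,\,1/\sin(\phi/2)\bigr)$, so with $n=\lceil\sqrt{b_0}\,m_0^{1/4}\rceil$ the polynomial $f(x)=\frac{m_0}{n^2}\bigl(\sin(n\phi/2)/\sin(\phi/2)\bigr)^4$ satisfies $f(0)=m_0$, $0\le f\le m_0$, and $f(x)\le m_0 b_0^2/(n^4x^2)\le 1/x^2$ on $(0,b_0]$, with $\deg f=2(n-1)\le 2\sqrt{b_0}\,m_0^{1/4}$. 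The trouble is (c): the lemma demands the constant $\sqrt{\pi}\approx 1.772$, and whenever $\sqrt{b_0}\,m_0^{1/4}>2/(2-\sqrt{\pi})\approx 8.8$ your bound $2\sqrt{b_0}\,m_0^{1/4}$ is strictly larger than $\sqrt{\pi}\sqrt{b_0}\,m_0^{1/4}+2$. So you have proved a weaker lemma, not the stated one.

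The paragraph meant to close this gap is an intention, not an argument. Saying that the extremal polynomial for the weighted Chebyshev problem ``is of Jacobi type'' and that a ``Gaussian-scaling asymptotic analysis'' in which $\int_0^\infty e^{-t^2}\,dt=\sqrt{\pi}/2$ ``appears naturally'' does not produce the constant: you would have to (i) write down an explicit (near-)extremal family, (ii) verify the uniform pointwise bound $|f(x)|\le\min(m_0,1/x^2)$ on the \emph{whole} of $(0,b_0]$, not merely in a scaling window near the origin, and (iii) prove, not assert, the degree asymptotics with the sharp constant. That is precisely the substance of the cited Lemma~3.4 of K\'os, Ligeti and Sziklai; the present paper does not reprove it but imports it as a black box, and your sketch does not replace it. It is worth noting, however, that the slack in Lemma~5.3 of the paper is large enough that the cruder constant $2$ from your Fej\'er construction would in fact still yield $\deg f_0<0.1\,d^{3d/2}n^{d/(d+1)}$ for all $d\ge 3$, so your construction could be used if one is willing to state and cite a correspondingly weaker version of the lemma.
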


\vspace{0.5cm}
\begin{lemma}\label{f2}(\cite[Lemma 3.5]{kos2009reconstruction} )
For arbitrary reals $a,b,m\geq 1$, there exists a polynomial $f(x)$ with real coefficients such that
\begin{itemize}
  \item[(a)]$f(0)=m$,
  \item[(b)]$|f(x)|<\min(4m,\frac{1}{x^2})$ for all $x\in [-a,b], x\neq 0$, and
  \item[(c)]$\deg f<7\sqrt{abm}+2$.
\end{itemize}
\end{lemma}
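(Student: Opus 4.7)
The plan is to derive Lemma~\ref{f2} from Lemma~\ref{f1} via a quadratic substitution that reduces the two-sided interval $[-a,b]$ to a one-sided range. I would first apply Lemma~\ref{f1} with parameters $b_0 = \sqrt{m}\,\max(a,b)^2$ and $m_0 = m$ to obtain a polynomial $g$ satisfying $g(0)=m$, $|g(y)| \le \min(m, 1/y^2)$ on $(0,b_0]$, and $\deg g < \sqrt{\pi}\,\max(a,b)\,\sqrt{m}+2$. Setting $f(x) := g(\sqrt{m}\,x^2)$ yields a polynomial with $f(0)=g(0)=m$ directly.

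To verify part (b), note that for $x\in [-a,b]\setminus\{0\}$ we have $\sqrt{m}\,x^2 \in (0,b_0]$, so
\[
|f(x)| \le \min\Bigl(m,\;\frac{1}{m\,x^4}\Bigr).
\]
A short case analysis on whether $|x| \le m^{-1/2}$ shows that this minimum is at most $1/x^2$ in both subcases: if $|x|\le m^{-1/2}$ then $m \le 1/x^2$, and otherwise $1/(m x^4) \le 1/x^2$. Combined with the trivial $m \le 4m$, this gives $|f(x)| < \min(4m, 1/x^2)$ as required, while $\deg f = 2\deg g < 2\sqrt{\pi}\,\max(a,b)\,\sqrt{m}+4$. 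This already meets the target $7\sqrt{abm}+2$ whenever $a$ and $b$ are of the same order.

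The main obstacle is the asymmetric regime $a \ne b$, where $\max(a,b)\,\sqrt{m}$ can exceed $\sqrt{abm}$ by a factor of $\sqrt{\max(a,b)/\min(a,b)}$, so the substitution $x\mapsto \sqrt{m}\,x^2$ alone is too coarse. To overcome this, I would replace the single substitution by a product construction of the form $f(x) = \bigl(f_+(x)\psi_+(x)\bigr)\bigl(f_-(x)\psi_-(x)\bigr)$, where $f_\pm$ come from two independent applications of Lemma~\ref{f1} (with $m_0=\sqrt{m}$) on $[0,b]$ and---after reflection---$[-a,0]$, and $\psi_\pm$ are auxiliary cut-off polynomials vanishing to suitable orders at the opposite endpoint in order to tame the unknown "wrong-side" behavior of each $f_\pm$. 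The hard part will be choosing the degrees of $\psi_\pm$ so as to balance the additional degree against the needed damping, exploiting the slack in the coefficient $4$ in $\min(4m,1/x^2)$ to absorb the cut-off constants and land within the degree budget $7\sqrt{abm}+2$.
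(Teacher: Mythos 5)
This lemma is quoted from K\'os, Ligeti, and Sziklai~\cite{kos2009reconstruction}; the present paper does not reprove it, so there is no internal proof to compare against. Judged on its own terms, your proposal is incomplete, and you have correctly identified where.

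The first half is sound in the regime where $a$ and $b$ are comparable. With $b_0=\sqrt{m}\,\max(a,b)^2$ and $m_0=m$, the substitution $f(x)=g(\sqrt{m}x^2)$ gives $\deg f < 2\sqrt{\pi}\,\max(a,b)\sqrt{m}+4$, and since $\max(a,b)=\sqrt{ab}\cdot\sqrt{\max(a,b)/\min(a,b)}$, this beats $7\sqrt{abm}+2$ precisely when the aspect ratio $\max(a,b)/\min(a,b)$ is at most about $49/(4\pi)\approx 3.9$. (A minor quibble: Lemma~\ref{f1}(b) gives a non-strict bound, so at $|x|=m^{-1/2}$ you get $|f(x)|\le m=\min(4m,1/x^2)$ rather than a strict inequality; this is cosmetic and can be fixed by a tiny rescaling, but it should be said.) Beyond that aspect ratio the quadratic substitution alone genuinely fails, exactly as you observe, and this is the main case since $a$ and $b$ can be wildly different.

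The second half is where the gap lies, and I think it is not a gap that a routine choice of $\psi_{\pm}$ will fill. Lemma~\ref{f1} controls $f_{+}$ only on $(0,b_0]$; outside that interval the only generic bound available is Chebyshev extrapolation, under which a degree-$d$ polynomial bounded by $M$ on an interval of length $L$ can be as large as $M\,T_d(1+2\delta/L)\approx M\,e^{c\,d\sqrt{\delta/L}}$ at distance $\delta$. For the $f_{+}$ you obtain, $d_{+}\approx\sqrt{b\sqrt{m}}$, so at $x=-a$ the ``wrong-side'' value can be exponentially large in $\sqrt{a\sqrt{m}}$. A cut-off $\psi_{+}$ of degree $k$ that equals roughly $1$ at $0$ and decays on $[-a,0)$ can, by the same Chebyshev extremality, decay at best like $e^{-c'k\sqrt{\delta/a}}$ at distance $\delta$ from $0$, so matching the exponential blow-up forces $k$ to be comparable to $d_{+}$; and since you need two such factors, the total degree doubles or worse. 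Whether that fits inside $7\sqrt{abm}+2$ is precisely ``the hard part'' you defer, and the numerology is not obviously in your favor: the slack between the $2\sqrt{\pi}\approx 3.54$ constant of the substitution method and the target constant $7$ is about a factor of $2$, and the cut-offs consume at least that. So as written, the argument is a plan rather than a proof: the balanced case is done, but the unbalanced case --- which is the content of the lemma --- is missing the decisive estimate, and the sketched repair runs into a quantitative obstacle (uncontrolled wrong-side growth of the Lemma~\ref{f1} polynomials) that needs to be confronted explicitly, either by proving an outside-the-interval bound for those polynomials or by replacing them with an explicit Chebyshev/Fej\'er-type construction whose global behavior is known.
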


\vspace{0.5cm}

For the $d$-dimensional case, we also suppose the polynomial mapping $E^d$ to $\bbR$ to be of the form $p(\vx)=p_0(\vx)\cdots p_{d-1}(\vx)$, where $p_i(\vx)=f_i \circ g_i(\vx)$ and $g_i$ is a direction function. The univariate polynomial $f_i$ could be chosen from \Cref{f1} or \Cref{f2} as before. However, the direction functions $g_i$ should be very carefully chosen such that the values of $g_i$ match with the domain of $f_i$, so as to bound the degree of $f_i$ by \Cref{f1} or \Cref{f2}. To accomplish this, we need to introduce the concept of ``primitive point" and related lattices in $\bbZ^d$, to apply the tools of lattice theory.

\subsection{Notations}

In the Euclidean space $E^d$, we use a bold small letter to mean a point or the corresponding  vector, whose meaning could be easily recognized from the context. For example, let $\|\va\|$ denote the norm (length) of the vector $\va$, and let $\angle(\va,\vb)$ denote the angle between two vectors $\va$ and $\vb$. On the other hand, the notation $\va\vb$ denotes the line segment with two endpoints $\va$ and $\vb$, and $d(\va,\vb)$ denotes the Euclidean distance between the two points. For a point set $S\subset E^d$, define $d(\va,S):=\inf_{\vy\in S}d(\va,\vy)$. For another point set $X\subset E^d$, further define $\Delta(X,S):=\sup_{\vx\in X}d(\vx,S)$.

An integer point $\va=(a_1,\ldots,a_d)\in \bbZ^d$ is $primitive$, if $\gcd(a_1,\ldots,a_d)=1$. Geometrically, a primitive point is an integer point visible from the origin in the integer lattice $\bbZ^d$.
Given a positive real number $R$, denote $\N(R)$ the set of all primitive points of length not larger than $R$, i.e.,
\[\N(R)=\{\va\in \bbZ^d: \va \text{ is primitive, }\|\va\|\leq R\}.\]
Note that $\va\in \N(R)$ if and only if $-\va\in \N(R)$.

For each $\va=(a_1,\ldots,a_d)\in\N(R)$, denote $\P(\va)$ the family of all $(d-1)$-hyperplanes with normal vector $\va$ (i.e., hyperplanes orthogonal to $\va$), that is, $\{(X_1,\ldots,X_d)\in E^d:a_1X_1+\ldots+a_dX_d=b\}$ with $b\in \bbR$. Further, we can define a lattice $L(\va)=\{(x_1,\ldots,x_d)\in \bbZ^d:a_1x_1+\ldots+a_dx_d=0\}$, which is a $(d-1)$-sublattice of $\bbZ^d$. The maps $\P$ and $L$ are both injective in terms of pairs $(\va,-\va)$, that is, over $\N(R)$, we have
 \[(\va,-\va)\stackrel{1:1}{\longleftrightarrow}\P(\va)\stackrel{1:1}{\longleftrightarrow}L(\va).\]
So we say that each element of $\P(\va)$ is a hyperplane of $\va$ and each integer lattice translated from $L(\va)$ is a lattice of $\va$.
$~$

 Let $\lambda \in \bbZ$ be a positive integer. Define
 \begin{align*}
\N'_{\lambda}(R)=\{(a_1,\ldots,a_d)\in \N(R):&\gcd(\lambda^d,a_i)=\lambda^{d-i}~(3\leq i\leq d);\\
&\gcd(a_1,a_2)=\lambda^{d-2},\sqrt{a_1^2+a_2^2}\geq \lambda^{d-1}\}.
\end{align*}
  For example, when $d=3, R=6, \lambda=2$, we have $\N'_{2}(6)=\{(\pm4,\pm2,\pm1),(\pm2,\pm4,\pm1),$ $(\pm4,\pm2,\pm3),$ $(\pm2,\pm4,\pm3)\}$. Let $\mathcal{S}_d$ denote the set of permutations on $[d]$. Define
\[\N_\lambda(R)=\cup_{\tau\in \mathcal{S}_d}\left\{(a_{\tau(1)},\ldots,a_{\tau(d)}): (a_1,\ldots,a_d)\in\N'_{\lambda}(R)\right\}.\]
That is, $\N_\lambda(R)$ is the collection of all rearrangements of vectors in $\N'_{\lambda}(R)$.
Note that $\N_\lambda(R)\subset\N(R)$ and if $\va\in\N_{\lambda}(R)$, then $-\va\in\N_{\lambda}(R)$.

  %

Now we set the two  important parameters in $\N_\lambda(R)$. For fixed $d\geq 3$  and sufficiently large $n$, let
\begin{equation}\label{setRlambd}
  \text{$R:=\sqrt{d}n^\frac{d-1}{d+1}$ and $\lambda$ be the largest prime not more than $n^\frac{1}{d+1}$.}
\end{equation}
By \cite[Theorem 1]{baker2001difference}, $\lambda=(1-o(1))n^\frac{1}{d+1}$.
With the defined values of $R$ and $\lambda$, we will show that  $\N_\lambda(R)$ has the following two properties.

\begin{itemize}
  \item  For each primitive point  $\va\in \N_\lambda(R)$, {the lattice $L(\va)$ has no short vectors.} See \Cref{lattice distance}.
  \item As a point set in $E^d$, $\N_\lambda(R)$ is densely distributed around  the $d$-sphere of radius $R$ centered at the origin. See \Cref{dense distribution}.  Note that we describe its density in such a way: for any line through the origin, we can always find some point in $\N_\lambda(R)$ around the $d$-sphere that is close to the line.

\end{itemize}
These two properties are crucial for the proof of \Cref{g_i(H)}, which shows that the values of $g_i$ match with the desired domain of $f_i$, so as to bound the degree of $f_i$.

\vspace{0.5cm}
\begin{lemma}\label{lattice distance}
For any $\va\in \N_\lambda(R)$, {each vector in the lattice $L(\va)$ is of length at least $\lambda$.}
\end{lemma}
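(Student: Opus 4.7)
The plan is, by symmetry, to reduce to the case $\va\in \N'_\lambda(R)$. Any coordinate permutation is an isometry of $\bbZ^d$ that carries $L(\va')$ to $L(\va)$ whenever $\va$ is obtained from $\va'$ by permuting entries, so the minimum nonzero length in $L(\va)$ equals that in $L(\va')$, and it suffices to treat $\va\in\N'_\lambda(R)$.

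Suppose, for contradiction, that $\vx=(x_1,\ldots,x_d)\in L(\va)$ is nonzero with $\|\vx\|<\lambda$. Then $|x_i|<\lambda$ for every $i$, so proving $\lambda\mid x_i$ will force $x_i=0$. Since $\lambda$ is prime, the conditions in the definition of $\N'_\lambda(R)$ translate cleanly into $\lambda$-adic valuations: $v_\lambda(a_i)=d-i$ for $3\leq i\leq d$, while $v_\lambda(a_1),v_\lambda(a_2)\geq d-2$. I would then reduce the identity $\sum_{i=1}^d a_i x_i=0$ successively modulo $\lambda,\lambda^2,\ldots,\lambda^{d-2}$. At the $j$-th step, every term with $v_\lambda(a_i)\geq j$ vanishes, the already-killed coordinates $x_{d-j+2},\ldots,x_d$ contribute zero, and the surviving congruence $a_{d-j+1}x_{d-j+1}\equiv 0\pmod{\lambda^j}$ combined with $v_\lambda(a_{d-j+1})=j-1$ forces $\lambda\mid x_{d-j+1}$, hence $x_{d-j+1}=0$. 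After $d-2$ such steps one has $x_3=x_4=\cdots=x_d=0$.

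The final step is to handle the residual equation $a_1x_1+a_2x_2=0$. Factoring out $\gcd(a_1,a_2)=\lambda^{d-2}$ and writing $a_j=\lambda^{d-2}b_j$ with $\gcd(b_1,b_2)=1$, every integer solution takes the form $(x_1,x_2)=t(b_2,-b_1)$ for some $t\in\bbZ$. If $t\neq 0$, then the inequality $\sqrt{a_1^2+a_2^2}\geq\lambda^{d-1}$ built into $\N'_\lambda(R)$ gives $\|\vx\|^2\geq b_1^2+b_2^2=(a_1^2+a_2^2)/\lambda^{2(d-2)}\geq \lambda^2$, contradicting $\|\vx\|<\lambda$. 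Hence $t=0$, so $\vx=\mathbf{0}$, the required contradiction.

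The only delicate part is the valuation bookkeeping in the cascade, and it is precisely what dictates the exponents $\lambda^{d-i}$ in the definition of $\N'_\lambda(R)$: the peeling process can reach index $i=3$ but runs out of $\lambda$-divisibility in $a_1,a_2$ there, which is why the remaining two coordinates must be controlled by the separate size condition on $\sqrt{a_1^2+a_2^2}$. Beyond this, the argument is a purely arithmetic cascade with no analytic subtleties.
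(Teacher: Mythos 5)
Your proof is correct, and it is essentially the same argument as the paper's: both exploit the graded $\lambda$-divisibility built into $\N'_\lambda(R)$ to force $\lambda\mid x_i$ for $i\geq 3$, and then use the size condition $\sqrt{a_1^2+a_2^2}\geq\lambda^{d-1}$ to handle the residual equation $a_1x_1+a_2x_2=0$. The paper is slightly more economical -- it singles out only the largest nonzero index $k$ and runs one divisibility step there (case $k\geq 3$) or goes straight to the $a_1,a_2$ argument (case $k\leq 2$), getting $\|\vx\|\geq|x_k|\geq\lambda$ directly rather than by contradiction -- but the arithmetic content of your cascade is identical, and your contradiction framing is equally valid.
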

\begin{proof}
It suffices to show that the conclusion holds for any $\va=(a_1,\ldots,a_d)\in \N'_{\lambda}(R)$, that is, $\|\vx\|\geq\lambda$ for each nonzero integer point $\vx=(x_1,\ldots,x_d)\in L(\va)$ with $\va\in \N'_{\lambda}(R)$.


Let $k=\max\{i\in [d]:x_i\neq 0\}$. If $k\geq 3$, by the definition of $\N'_{\lambda}(R)$, $\gcd(a_1,\ldots,a_{k-1})=\lambda^{d-k+1}$. Since $a_1x_1+\cdots+a_kx_k=0$, we have $\lambda^{d-k+1}|a_kx_k$. Then $\gcd(\lambda^d,a_k)=\lambda^{d-k}$ implies  $\lambda|x_k$, and hence $\|\vx\|\geq |x_k|\geq \lambda$. If $k\leq 2$, we can write $a_1=a'_1\lambda^{d-2},a_2=a'_2\lambda^{d-2}$ with $a'_1,a'_2\in \bbZ\backslash\{0\}$ and $\gcd(a'_1,a'_2)=1$ by definition. Then $a'_1x_1+a'_2x_2=0$, which yields $a'_1|x_2,a'_2|x_1,x_1,x_2\neq 0$, and thus $|x_2|\geq |a'_1|$, $|x_1|\geq |a'_2|$. So $\|\vx\|\geq \sqrt{x_1^2+x_2^2}\geq \sqrt{{a'_2}^2+{a'_1}^2}\geq \lambda$. This completes the proof.
\end{proof}

\vspace{0.5cm}
\begin{lemma}\label{dense distribution} Fix an arbitrary integer $d\geq 3$. When $n$ is sufficiently large, for any line $l\in E^d$ through the origin, there exists some point $\vb\in\N_\lambda(R)$ such that
\[\|\vb\|=(1-o(1))R~~\text{and}~~d(\vb,l)\leq(1+o(1))\lambda^{d-2}.\]
\end{lemma}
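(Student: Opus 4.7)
Let $\vu=(u_1,\ldots,u_d)$ be the unit direction of $l$. Because $\N_\lambda(R)$ is closed under coordinate permutations and sign flips, I may assume $u_1\geq u_2\geq\cdots\geq u_d\geq 0$, so that $u_1\geq 1/\sqrt{d}$ and $\sqrt{u_1^2+u_2^2}\geq 1/\sqrt{d}$. The plan is to round the coordinates of $R'\vu$, for some carefully chosen $R'\in[R-K,R]$ with $K=o(R)$, to produce a vector $\vb=(a_1,\ldots,a_d)\in\N'_{\lambda}(R)$ approximating $R'\vu$; since $\N'_{\lambda}(R)\subset\N_\lambda(R)$, this will suffice.

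For the ``tail'' coordinates $i=3,\ldots,d$, I would set $a_i=\lambda^{d-i}b_i$, where $b_i\in\bbZ\setminus\{0\}$ is the closest integer to $R'u_i/\lambda^{d-i}$ satisfying $\gcd(b_i,\lambda)=1$. Since $\lambda$ is prime, such $b_i$ exists with $|b_i-R'u_i/\lambda^{d-i}|\leq 3/2$, yielding $|a_i-R'u_i|\leq (3/2)\lambda^{d-i}$. Summing squares gives $\sum_{i=3}^{d}(a_i-R'u_i)^2=O(\lambda^{2(d-3)})=o(\lambda^{2(d-2)})$, a negligible contribution to the final bound.

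The delicate step is choosing $(a_1,a_2)=(\lambda^{d-2}b_1,\lambda^{d-2}b_2)$ with $\gcd(b_1,b_2)=1$ approximating $R'(u_1,u_2)$. A simple pointwise rounding can fail because for some real targets every integer pair in the neighbouring unit square has $\gcd>1$ (for instance, all four integer pairs nearest $(174.5,20.5)$ share a common factor). To circumvent this, I would let $R'$ vary over $[R-K,R]$ with $K$ chosen as a suitably large multiple of $\lambda^{d-2}$ (say $K=\lambda^{d-2}\log n$); note $K=o(R)$ since $R\asymp\lambda^{d-1}$. As $R'$ sweeps the window, the target $(R'u_1/\lambda^{d-2},R'u_2/\lambda^{d-2})$ traces a segment in $\bbR^2$ of length $L=K\sqrt{u_1^2+u_2^2}/\lambda^{d-2}\gtrsim\log n$. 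The tube of radius $1+o(1)$ about this segment has area $\gtrsim 2L$, and since primitive points in $\bbZ^2$ have positive asymptotic density $6/\pi^2$, a standard lattice-point-counting estimate guarantees that this tube contains at least one primitive pair $(b_1,b_2)$. Fixing this pair and the corresponding $R'$ gives $(a_1-R'u_1)^2+(a_2-R'u_2)^2\leq (1+o(1))^2\lambda^{2(d-2)}$.

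Combining the two contributions, the error vector $\mathbf{e}:=\vb-R'\vu$ satisfies $\|\mathbf{e}\|\leq (1+o(1))\lambda^{d-2}$. The divisibility and exact-$\gcd$ conditions for $\N'_{\lambda}(R)$ hold by construction; primitivity of $\vb$ follows from $\gcd(b_1,b_2)=1$ together with $\gcd(b_d,\lambda)=1$; and $\sqrt{a_1^2+a_2^2}\geq\lambda^{d-1}$ follows from $R'u_1/\lambda^{d-2}\geq\lambda(1+o(1))$. After slightly enlarging $K$ if needed to absorb the parallel component of $\mathbf{e}$, we obtain $\|\vb\|\leq R$, $\|\vb\|\geq R'-\|\mathbf{e}\|=(1-o(1))R$, and $d(\vb,l)\leq\|\mathbf{e}\|\leq (1+o(1))\lambda^{d-2}$, which proves the lemma. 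I expect the main technical obstacle to be precisely the coprime-pair step for $i=1,2$: pointwise rounding provably fails in general, so the one-parameter density / lattice-counting argument over $R'$ appears essential.
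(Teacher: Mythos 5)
Your overall plan---round the coordinates $\lambda$-adically and rescale along $l$ so as to land inside $\N'_\lambda(R)$---matches the paper's, and you correctly identify the crux: producing a coprime pair $(b_1,b_2)$ near the rescaled target. But your resolution of that crux has a genuine gap, and the paper handles it very differently.

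You propose to sweep $R'$ over a window of length $K\asymp\lambda^{d-2}\log n$, so that the scaled target traces a segment of length $L\gtrsim\log n$ in $E^2$, and then assert that a ``standard lattice-point-counting estimate'' plus the fact that primitive points have asymptotic density $6/\pi^2$ guarantees a primitive pair in the tube of radius $1+o(1)$ about that segment. No such standard estimate exists: positive average density does not give presence in every $O(1)$-width tube, which is far below the scale at which the $6/\pi^2$ density is effective, and one can certainly arrange that the integer points nearest a given real target all have a common factor (your own example). A rigorous version of your idea would need something like Jacobsthal-function bounds (the maximal gap between integers coprime to a fixed modulus $m$ is $\ll(\log m)^2$), after reducing to finitely many candidate values of $b_2$; this would force $K\gtrsim\lambda^{d-2}(\log n)^2$, which is still $o(R)$, so the approach is plausibly salvageable, but it is not complete as written. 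The paper avoids this entirely by a snap-to-prime trick: it shows the scaled first coordinate $\overline{x}_1\gtrsim\lambda\to\infty$, takes $p$ the largest prime $\leq\overline{x}_1$ (so $p=(1-o(1))\overline{x}_1$ by prime-gap estimates), rescales the target along $l$ so the first scaled coordinate is exactly $p$, and then takes for the second coordinate any $i_0\in[p-1]$ within distance $1$ of the rescaled second coordinate; $(p,i_0)$ is automatically primitive because $p$ is prime, with no density or tube-counting argument required.

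There is also a secondary slip: you only use $\sqrt{u_1^2+u_2^2}\geq 1/\sqrt{d}$, which gives $\sqrt{b_1^2+b_2^2}\geq(1-o(1))\lambda$, and the asserted inequality $R'u_1/\lambda^{d-2}\geq(1+o(1))\lambda$ is not justified (the $o(1)$ can have the wrong sign); this fails to guarantee the membership condition $\sqrt{a_1^2+a_2^2}\geq\lambda^{d-1}$. The paper instead uses the sharper bound $\sqrt{u_1^2+u_2^2}\geq\sqrt{2/d}$, which follows from $u_1\geq u_2\geq\cdots\geq u_d\geq 0$ and $\sum u_i^2=1$, so that $\sqrt{b_1^2+b_2^2}\geq(1-o(1))\sqrt{2}\lambda>\lambda$ and hence $\sqrt{a_1^2+a_2^2}>\lambda^{d-1}$ with room to spare.
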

\begin{proof}
Take a point $\va_1\in l$ such that $\|\va_1\|=(1-\epsilon)R$, where $\epsilon=\frac{2\lambda^{d-2}}{R}=o(1)$. Without loss of generality, let $\va_1=(x_1,\ldots,x_d)$ with $x_1\geq x_2\geq\cdots\geq x_d\geq 0$. Then $\sqrt{x_1^2+x_2^2}\geq\sqrt{\frac{2}{d}}(1-\epsilon)R$. We want to find some point $\vb$ in $\N_\lambda(R)$ close to $\va_1$.

Consider a map $\pi:E^d\rightarrow E^2$, which maps
\[\vr=(r_1,\ldots,r_d)\mapsto\overline{\vr}=(\overline{r}_1,\overline{r}_2)=\lambda^{-(d-2)}(r_1,r_2),\]
then $\pi$ is a projection onto the first two coordinates with certain scaling. For $\overline{\va}_1=\pi(\va_1)=(\overline{x}_1,\overline{x}_2)=\lambda^{-(d-2)}(x_1,x_2)$, recall that $R=\sqrt{d}n^\frac{d-1}{d+1}$ and $\lambda=(1-o(1))n^\frac{1}{d+1}$, we have
\[\|\overline{\va}_1\|=\sqrt{\overline{x}_1^2+\overline{x}_2^2}=\lambda^{-(d-2)}\sqrt{x_1^2+x_2^2}\geq\lambda^{-(d-2)}\sqrt{\frac{2}{d}}(1-\epsilon)R
\geq(1-o(1))\sqrt{2}\lambda.\]
Since $x_1\geq x_2\geq 0$, we have $\overline{x}_1\geq\overline{x}_2\geq 0$. Then $\overline{x}_1\geq\frac{\sqrt{2}}{2}\sqrt{\overline{x}_1^2+\overline{x}_2^2}=\Omega(\lambda)=\Omega(n^\frac{1}{d+1})$, which is large. Let $p$ be the largest prime satisfying $p\leq\overline{x}_1$. By \cite[Theorem 1]{baker2001difference}, $p=(1-o(1))\overline{x}_1$. Let $\overline{\va}=(p,q)\in \vo\overline{\va}_1\subset \pi(l)$. Then $q/p=\overline{x}_2/\overline{x}_1$ and thus $0\leq q\leq p$
and $\|\overline{\va}\|=\frac{p}{\overline{x}_1}\|\overline{\va}_1\|=(1-o(1))\|\overline{\va}_1\|$. Since $p$ is prime, $(p,i)$ is a primitive point in $\bbZ^2$ for all $i\in[p-1]$. Hence there exists a primitive point $\overline{\vb}=(p,i_0)$ such that $i_0\in[p-1]$ and $|i_0-q|\leq 1$. 
{Then
\[\|\overline{\vb}\|=\sqrt{p^2+i_0^2}=(1\pm o(1))\sqrt{p^2+q^2}=(1\pm o(1))\|\overline{\va}\|=(1\pm o(1))\|\overline{\va}_1\|\geq (1-o(1))\sqrt{2}\lambda.\]}
Let $\va=(y_1,\ldots,y_d)=\pi^{-1}(\overline{\va})\cap l$ be a preimage of $\overline{\va}$, then $y_1=\lambda^{d-2}p, y_2=\lambda^{d-2}q, \|\va\|/\|\va_1\|=\|\overline{\va}\|/\|\overline{\va}_1\|$ and thus \[\|\va\|=(1-o(1))\|\va_1\|=(1-o(1))(1-\epsilon)R.\]
For $\overline{\vb}=(p,i_0)$, take one of its preimage $\vb=(b_1,\ldots,b_d)\in\bbZ^d$ with $b_1=p\lambda^{d-2},b_2=i_0\lambda^{d-2}$ and $b_i=(\lfloor y_i/\lambda^{d-i}\rfloor+\delta_i)\lambda^{d-i}~(3\leq i\leq d)$, where $\delta_i\in \{0,1\}$ such that $\lambda\nmid (\lfloor y_i/\lambda^{d-i}\rfloor+\delta_i)$. In particular, $\lambda\nmid b_d$. Hence $b_1=y_1$, $|b_2-y_2|=\lambda^{d-2}|i_0-q|\leq\lambda^{d-2}$, $|b_i-y_i|\leq\lambda^{d-i}$ for $3    \leq i\leq d$ and thus \[d(\va,\vb)=\sqrt{(y_1-b_1)^2+(y_2-b_2)^2+\cdots+(y_d-b_d)^2}\leq (1+o(1))\lambda^{d-2}=o(R).\]
So we have $d(\vb,l)\leq d(\va,\vb)\leq(1+o(1))\lambda^{d-2}$. That is, we have found a point $\vb$ which is close to the line $l$.

Next, we need to show that $\vb\in\N_\lambda(R)$. Indeed, observe that
\begin{itemize}
  \item $\|\vb\|\leq \|\va\|+d(\va,\vb)\leq(1-o(1))(1-\epsilon)R+(1+o(1))\lambda^{d-2}\\
~~~~=(1-o(1))(R-2\lambda^{d-2})+(1+o(1))\lambda^{d-2}\leq(1-o(1))R\leq R$;
  \item $\gcd(b_1,b_2,\ldots,b_d)\leq \gcd(b_1,b_2,b_d)=\gcd(p\lambda^{d-2},i_0\lambda^{d-2},b_d)=\gcd(p,i_0,b_d)=1$;
  \item $\gcd(\lambda^d,b_i)=\lambda^{d-i}$ for $3\leq i\leq d$;
  \item $\gcd(b_1,b_2)=\lambda^{d-2}\gcd(p,i_0)=\lambda^{d-2}$; and
  \item
        {$\sqrt{b_1^2+b_2^2}=\lambda^{d-2}\sqrt{p^2+i_0^2}=\lambda^{d-2}\|\overline{\vb}\|\geq\lambda^{d-2}(1-o(1))\sqrt{2}\lambda>\lambda^{d-1}$.}
\end{itemize}
By definition, $\vb\in\N'_\lambda(R)\subset\N_\lambda(R)$.

Finally, we claim that $\|\vb\|=(1-o(1))R$. We see that $\|\vb\|\leq(1-o(1))R$ above, and similarly, $\|\vb\|\geq \|\va\|-d(\va,\vb)=(1-o(1))R-o(R)=(1-o(1))R$. Hence $\|\vb\|=(1-o(1))R$. This completes the proof.
\end{proof}

\subsection{Idea of the polynomial construction}


Given an arbitrary nonempty set $H\subset [n]^d$, we briefly explain the idea of the construction of our polynomial  $p=p_0p_1\cdots p_{d-1}$ with each $p_i=f_i\circ g_i$, where $f_i$ is a univariate polynomial provided by \Cref{f1} or \Cref{f2}, and $g_i$ is a $d$-variable direction function. Required by \Cref{d_polynomial}, we aim to construct a polynomial $p$ with a peak value and of degree $O(n^\frac{d}{d+1})$. Observing that $\deg p=\sum\deg p_i=\sum\deg f_i$ as $g_i$ is linear, we need to bound the degree of $f_i$ up to $O(n^\frac{d}{d+1})$.
{Required by property (c) of \Cref{f1} and \Cref{f2},} we need to bound the range of values of $g_i$ on $H$.  For convenience, let $g_i(X)$ be the set of values of $g_i$ on all points in $X$ for some  $X\subset E^d$.

In the construction, we take $f_0$ from \Cref{f1} with the domain $[0,b_0]$ and $f_0(0)=m_0$, and $f_i$ from \Cref{f2} with the domain $[-a,b]$  and $f_i(0)=m$ for $1\leq i\leq d-1$, where $b_0=O(n^\frac{2d}{d+1})$, $a=O(n^\frac{d-1}{d+1})$, $b=\sqrt{d}n$ and $m_0, m$ are two positive constants. The explicit definitions of these parameters are given in {Eq. (\ref{eq-f1}) and Eq. (\ref{eq:ab})}. So we can apply  \Cref{f1} (c) and \Cref{f2} (c) to get $\deg f_i=O(n^\frac{d}{d+1})$ for $0\leq i\leq d-1$.

For the $d$ direction functions $g_i$, let $\va_i$ be their direction vectors and $\alpha_i$ be their zero-valued $(d-1)$-hyperplanes, respectively, i.e., $g_i(\alpha_i)=0$. In the construction, we take the point $\vh\in H$ in \Cref{d_polynomial} to be the unique intersecting point of these $d$ zero-valued hyperplanes $\alpha_i$, i.e., $\vh=\alpha_0\cap\cdots\cap\alpha_{d-1}$. Then define $g_i(\vx)=\va_i\cdot(\vx-\vh)$. This requires that the $d$ direction vectors $\va_i$ are linearly independent.

{
In order to reach the conclusion of  \Cref{d_polynomial},} we need each $g_i$ possessing the following three properties.

\vspace{0.3cm}
\textbf{Property A}: $g_0(H)\subset[0,b_0]$ and $g_i(H)\subset[-a,b],~1\leq i\leq d-1$.

\vspace{0.3cm}
\textbf{Property B}: The map $(g_0,g_1,\ldots,g_{d-1}):E^d\rightarrow E^d, \vx\mapsto (g_0(\vx),g_1(\vx),\ldots,g_{d-1}(\vx))$ is injective.

\vspace{0.3cm}
\textbf{Property C}: $g_0(H)\subset\bbZ$ and $g_i(H\cap g_0^{-1}(k))\subset\bbZ+\epsilon_{ik}$ with a constant $\epsilon_{ik}\in(-\frac{1}{2},\frac{1}{2}]$ for $ 1\leq i\leq d-1,k\in \bbZ$. That is, $g_i(H\cap g_0^{-1}(k))$ is contained in a translation of $\bbZ$.

\vspace{0.3cm}
Note that \textbf{Property A} is used to bound the degrees of $f_i$ and hence guarantees that the polynomial $p$ is of degree $O(n^\frac{d}{d+1})$. \textbf{Property B} is equivalent to that the $d$ direction vectors $\va_i$ are linearly independent and thus the $d$ zero-valued hyperplanes $\alpha_i$ intersect at a unique point $\vh$. \textbf{Property C} will be used to show that the sums $\sum_{g_0(H)}|f_0|$ and $\sum_{g_i(H\cap g_0^{-1}(k))}|f_i|$ are upper bounded when applying \Cref{f1} (b) and \Cref{f2} (b). Therefore, combining \textbf{Property B} with \textbf{Property C}, we are able to estimate the following sum as in Eq. (\ref{eq-p2})
\begin{align}\label{eq-peak}
\sum_{\vx\in H}|p(&\vx)|=\sum_{\vx\in H}|f_0\circ g_0(\vx)||f_1\circ g_1(\vx)|\cdots|f_{d-1}\circ g_{d-1}(\vx)|\notag\\
&\overset{\textbf{Property~B}}{\leq} \sum_{k\in g_0(H)}|f_0(k)|(\sum_{z_1\in g_1(H\cap g_0^{-1}(k))}|f_1(z_1)|)\cdots(\sum_{z_{d-1}\in g_{d-1}(H\cap g_0^{-1}(k))}|f_{d-1}(z_{d-1})|)\notag\\
&{\overset{\textbf{Property~C}}{<} 2p(\vh).}
\end{align}
This deduction degenerates the polynomial $p$ into univariate case so that one can construct it by applying the polynomials $f_i$ for sequence reconstruction. Note that in Eq. (\ref{eq-peak}), the first inequality results immediately from \textbf{Property B}, and the second inequality holds after careful computations under \textbf{Property C}, see details in \Cref{p(H)}. Also note that Eq. (\ref{eq-peak}) implies Eq. (\ref{equation d}). Therefore, \textbf{Properties A-C} of $g_i$ imply that $p$ has  degree $O(n^\frac{d}{d+1})$ and has a peak value $p(\vh)$, thus verify \Cref{d_polynomial}.

\vspace{0.5cm}
So the main task next is to construct direction functions ${g_i}$ so that they satisfy \textbf{Properties A-C}.


Given $f_0$ from \Cref{f1} and $f_i$ for $1\leq i\leq d-1$ from \Cref{f2} above, we will take the direction vector $\va_0$ from $\N_\lambda(R)$ and the $d-1$ direction vectors $\va_i$, $1\leq i\leq d-1$, being unit vectors. Then for any $ \vx\in E^d$, $g_0(\vx+\frac{\va_0}{R})-g_0(\vx)\leq 1$ and $g_i(\vx+\va_i)-g_i(\vx)=1,1\leq i\leq d-1$. Hence we have
\[\max_{H}g_0-\min_{H}g_0\leq O(nR)=O(n^\frac{2d}{d+1})\leq b_0,~\max_{H}g_i-\min_{H}g_i\leq O(n)\leq b+a,1\leq i\leq d-1,\]
required by \textbf{Property A}.

Given direction vectors $\va_i$ above, $g_i$ is determined by its zero-valued hyperplane $\alpha_i$. Geometrically, \textbf{Property A} implies that $\alpha_0$ \emph{is tangent to} $H$ (see \Cref{tangent}) as $g_0(H)\geq 0$, and for $1\leq i\leq d-1$, $\alpha_i$ is approximately tangent to $H$ as 
{$\min g_i(H)\geq -a=-O(n^\frac{d-1}{d+1})=-o(n)$}. Combined with that $\vh=\alpha_0\cap\cdots\cap\alpha_{d-1}$, we see $\alpha_i$ is a perturbation of $\alpha_0$ around $\vh$ and thus $\va_i$ is a unitized perturbation of $\va_0$ for $1\leq i\leq d-1$. Then our construction follows three steps as below. Here, all deductions are roughly explained and details will be given in the later sections.

\vspace{0.3cm}
\textbf{Step 1.} Take
$S$  a $d$-sphere of radius $\frac{\sqrt{d}}{2}n$ in $E^d$, and
$P$ a circumscribed hyperpolygon of $S$ whose
facets formed by two hyperplanes in $\P(\va)$ that are tangent to $S$, where $\va$ runs over $\N_\lambda(R)$. We translate $S\cup P$ carefully so that
 $P$ \emph{is tangent to} $H$, that is, all points in $H$ are inside of $P$ and some points in $H$ touches $P$.
See \Cref{Fig.1}.

\begin{figure}[H]
\centering 
\begin{minipage}[h]{0.45\textwidth} 
\centering 
\includegraphics[width=1\textwidth]{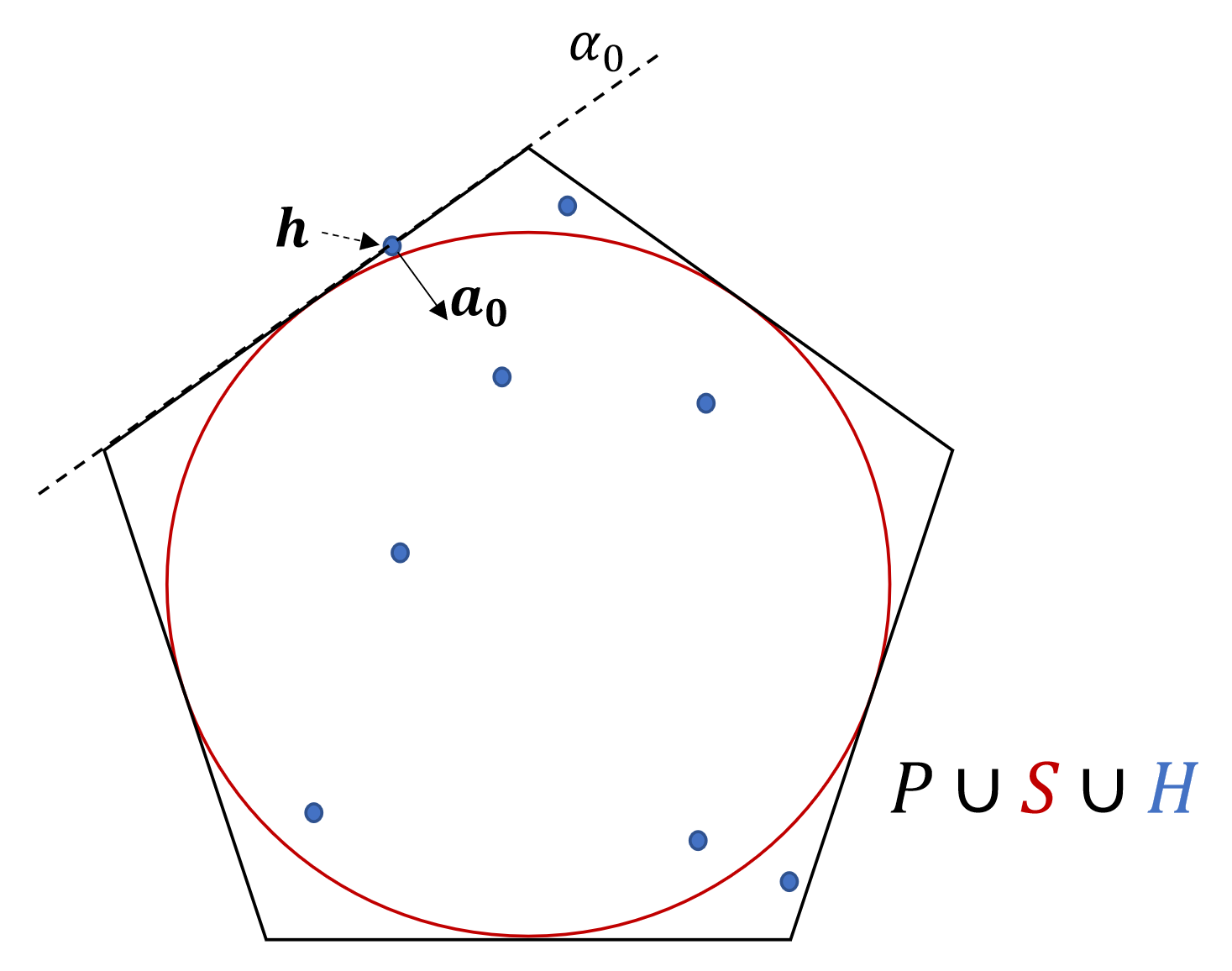} 
\caption{$P$ is tangent to $H$}
\label{Fig.1}
\end{minipage}
\begin{minipage}[h]{0.45\textwidth} 
\centering 
\includegraphics[width=1\textwidth]{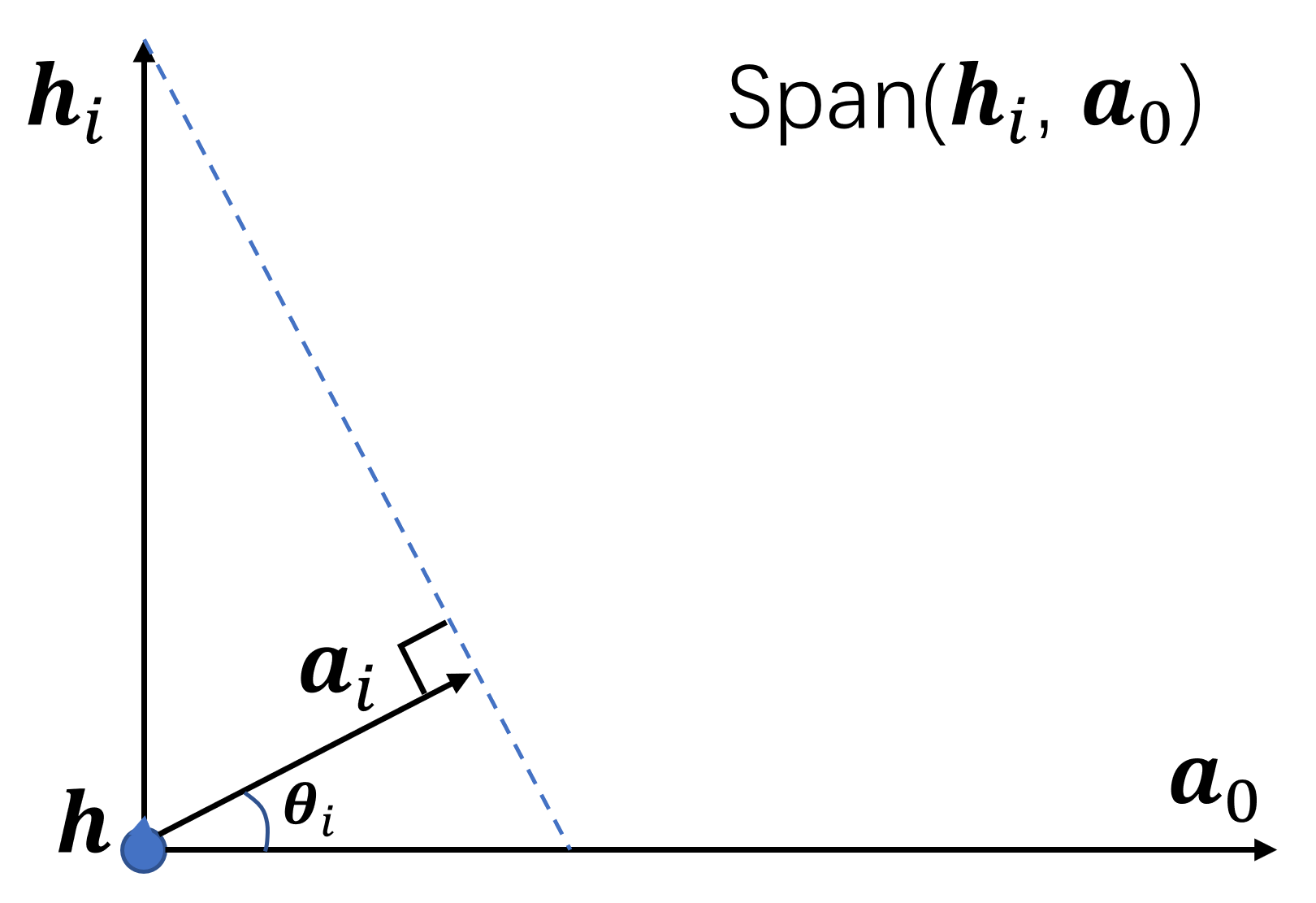}
\caption{Construct $\va_i$ in $\text{Span}(\vh_i,\va_0)$}
\label{Fig.2}
\end{minipage}
\end{figure}

Take $\alpha_0$ a facet of $P$ that intersects $H$, and take $\vh\in \alpha_0\cap H$. Then
$\alpha_0$ is associated with a hyperplane (still denoted by $\alpha_0$) of some $\va_0\in \N_\lambda(R)$ and $\alpha_0$ is tangent to $H$. Construct the first direction function $g_0(\vx)=\va_0\cdot(\vx-\vh)$. Then $g_0$ satisfies
\textbf{Property A} with $g_0(H)\subset[0,\sqrt{d}nR]$ and \textbf{Property C} with $g_0(H)\subset\bbZ$; see \Cref{g}.

Further, this careful construction of $S\cup P$ forces all facets of $P$ are small and thus $P$ is close to $S$; see \Cref{DPS}. This is because $\N_\lambda(R)$ is densely distributed around the $d$-sphere of radius $R$ centered at the origin, i.e., \Cref{dense distribution}. Then the fact that $P$ is tangent to $H$ implies that $S$ is approximately tangent to $H$.
This is helpful for the later constructions of $g_i$ whose zero-valued hyperplane $\alpha_i$ is approximately tangent to $S$.

\vspace{0.3cm}
\textbf{Step 2.} Fix the above point $\vh$ as the origin of $E^d$, and take a Mahler  basis $\vy_1,\ldots,\vy_{d-1}$ of the lattice $L(\va_0)$.  Then
$L(\va_0)$ is the maximal integral $(d-1)$-lattice in $\alpha_0$ as $\vh\in \alpha_0$ is the origin. Since $\va_0\in \N_\lambda(R)$, $\|\vy_i\|\geq \lambda$ by \Cref{lattice distance}. For $1\leq i\leq d-1$, let the \emph{height vector} of $\vy_i$ be $\vh_i$,  that is the projection of $\vy_i$ in the direction orthogonal to all $\vy_j$ with $j\neq i$, see \Cref{defhv}. By the approximate orthogonality of Mahler basis \cite[Corollary 3.35]{tao2006additive}, we have
 \[\|\vh_i\|\approx \|\vy_i\| =\Omega(\lambda)=\Omega(n^\frac{1}{d+1}),1\leq i\leq d-1.\]
See \Cref{h_i}.
\vspace{0.3cm}

\textbf{Step 3.} As shown in \Cref{Fig.2}, construct each unit vector $\va_i$ in the plane $\text{Span}(\vh_i,\va_0)$ by rotating $\va_0$ towards $\vh_i$ with a small angle $\theta_i=\arcsin\|\vh_i\|^{-1}$. Let direction functions $g_i(\vx)=\va_i\cdot(\vx-\vh),1\leq i\leq d-1$.
Denote $\alpha_i$ the hyperplane orthogonal to $\va_i$ through $\vh$, then $g_i(\alpha_i)=0$. Since $\va_i$ is a unitized perturbation of $\va_0$ around $\vh$, $\alpha_i$ is a perturbation of $\alpha_0$ around $\vh$ and $\vh\in \alpha_0\cap\cdots\cap\alpha_{d-1}$.
\begin{itemize}
  \item Since $\alpha_0$ is tangent to $S$, $\alpha_i$ is approximately tangent to $S$. This implies that  $g_i(H)\geq -o(n)$ (With careful computation, $g_i(H)\geq -O(n^\frac{d-1}{d+1})$).
Hence $g_i(H)\subset[-a,b],~1\leq i\leq d-1$ (\textbf{Property A}); see \Cref{g_i(H)}.
\item Since $\vy_1,\ldots,\vy_{d-1}$ is a basis of the lattice $L(\va_0)$  and  $\va_i\in \text{Span}(\vh_i,\va_0)$ ($1\leq i\leq d-1$), $\va_i \perp \vy_j$ for any $i\neq j$.
Thus $g_i(L(\va_0))=g_i(\bbZ \vy_1+\cdots+\bbZ \vy_{d-1})=g_i(\bbZ \vy_i),1\leq i\leq d-1$, which implies that
 the map $(g_1,\ldots,g_{d-1}):E^d\rightarrow E^{d-1},\vx\mapsto (g_1(\vx),\ldots,g_{d-1}(\vx))$ is injective restricted on $L(\va_0)=g_0^{-1}(0)\cap \bbZ$. So \textbf{Property B} is obtained; see \Cref{rem}.
\item By definition of height vectors, the projection of the lattice $L(\va_0)$ on $\text{Span}(\vh_i,\va_0)$ degenerates into a $1$-lattice with the basis $\vh_i$. Observing that the projection of $\vh_i$ on $\text{Span}(\va_i)$ is $\va_i$, we have $g_i(L(\va_0))=\bbZ$.
 So $g_i(H \cap L(\va_0))=g_i(H\cap g_0^{-1}(0))\subset\bbZ$, that is, \textbf{Property C} holds for $k=0$. By the periodicity of the lattice $\bbZ^d$,
 $g_i(H\cap g_0^{-1}(k))\subset\bbZ+\epsilon_{ik},1\leq i\leq d-1,k\in \bbZ$, where $\epsilon_{ik}\in(-\frac{1}{2},\frac{1}{2}]$ is a constant. So \textbf{Property C} holds for any $k$.
\end{itemize}


\vspace{0.5cm}
\section{Construction of the polynomial}\label{sec:con}

Recall that  the polynomial $p$ is of the form $p=p_0p_1\cdots p_{d-1}$. Next we construct $p_i$ for different $i$ separately.

\subsection{Construction of $p_0=f_0\circ g_0$}

\begin{definition}\label{tangent}
For a bounded point set $H\subset E^d$, a $(d-1)$-hyperplane $\alpha$ \emph{is tangent to} $H$, if $\alpha\cap H\neq\emptyset$ and $H$ lies on one side of $\alpha$. Similarly, a $d$-sphere $S$ \emph{is tangent to} $H$, if $S\cap H\neq\emptyset$ and $H$ lies inside $S$.


\end{definition}
\vspace{0.5cm}

Now we construct the hyperpolygon $P$ and the $d$-sphere $S$ in \Cref{Fig.1} as follows.

Given an arbitrary nonempty set $H\subset [n]^d$, let $S'$ be a $d$-sphere of radius $\frac{\sqrt{d}}{2}n$ such that $H$ is inside of $S'$. This is allowable since we can put all points of $[n]^d$ inside $S'$. For each $\va\in\N_\lambda(R)$, $\P(\va)$ contains exactly two hyperplanes tangent to $S'$, corresponding to the pair  $(\va,-\va)$ as their normal vectors, respectively. Let $\va$ go through $\N_\lambda(R)$, whose elements are all primitive points. Then all pairs of  hyperplanes  tangent to $S'$ form a  circumscribed convex hyperpolygon $P'$ of $S'$, and each facet of $P'$ one-to-one corresponds to an element of $\N_\lambda(R)$. It is clear that $H$ is also inside $P'$.

Translate $P'\cup S'$ to $P\cup S$ such that $P$ is tangent to $H$, that is, some facet $\alpha_0$ of $P$ from some $\P(\va_0)$ is tangent to $H$ and $H$ is still inside $P$, where $\va_0$ points to the same side of $\alpha_0$ as $H$. Pick some point $\vh\in \alpha_0\cap H$, then $\vh$ will be the desired point in \Cref{d_polynomial}.

\vspace{0.3cm}
Note that after this translation, some points in $H$ may be outside $S$ as in \Cref{Fig.1}. Especially for the selected point $\vh\in \alpha_0\cap H\subset P$, since $P$ is outside $S$, $\vh$ might be outside $S$ and generally $\vh\notin S$. However, we claim that  $S$ is indeed approximately tangent to $H$, that is, all points of $H$ outside $S$ are close to $S$. Since $P$ is tangent to $H$, it suffices to show that $P$ is close to $S$, or mathematically, $\Delta(P,S)$ is small. 
 Indeed, the following lemma tells us that $\Delta(P,S)=O(n^\frac{d-1}{d+1})$ which is small comparing to the radius $\frac{\sqrt{d}}{2}n$ of $S$.

\begin{figure}[H]
\centering
\includegraphics[width=0.5\textwidth]{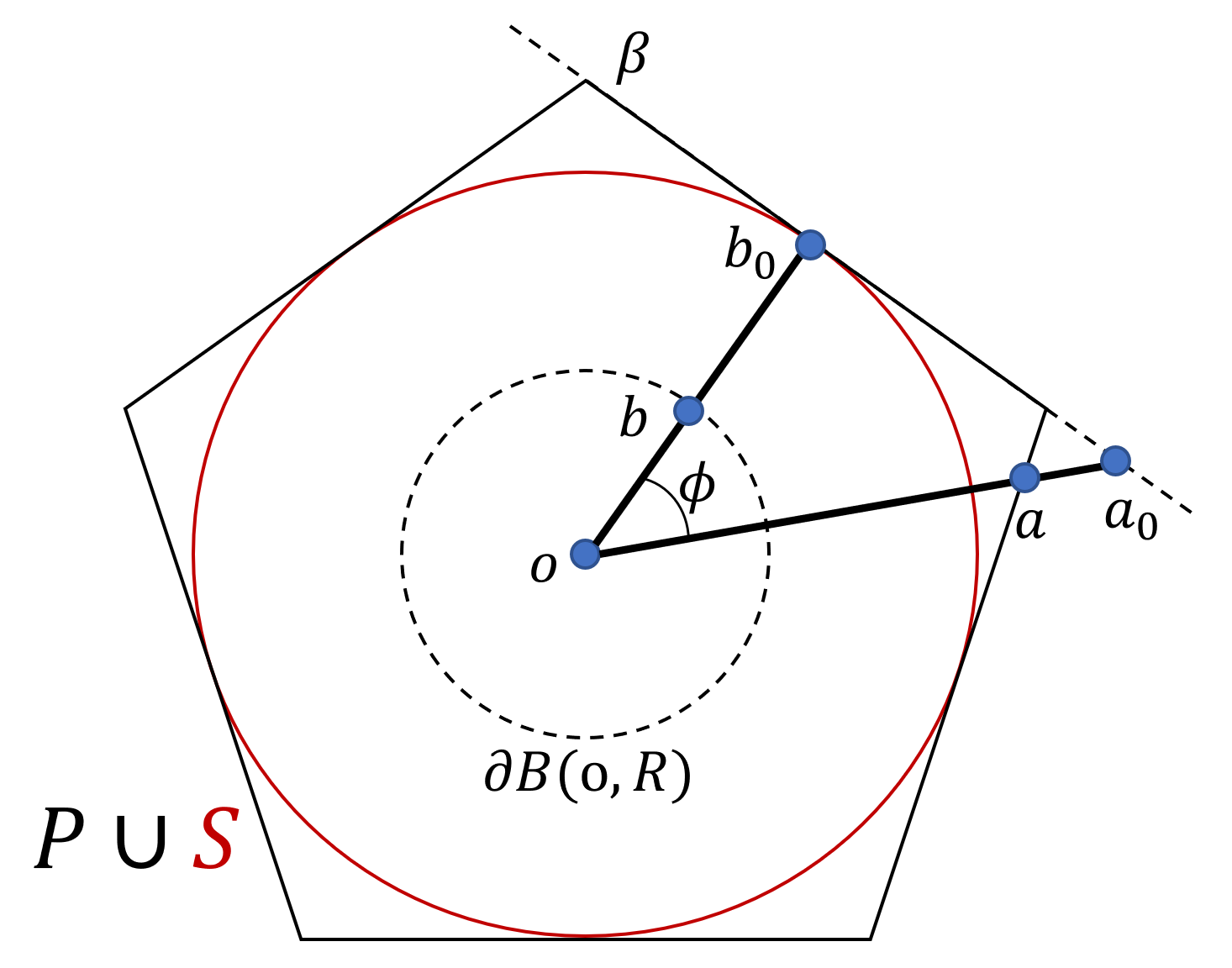}
\caption{For \Cref{DPS}. Here $\partial B(\vo,R)$ denotes the $d$-sphere of radius $R$.}
\label{Fig.3}
\end{figure}

\begin{lemma}\label{DPS}
For fixed $d\geq 3$, when $n$ is sufficiently large, 
\[\Delta(P,S)\leq (1+o(1))\frac{1}{4\sqrt{d}}n^\frac{d-1}{d+1}.\]
\end{lemma}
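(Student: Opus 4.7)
The plan is to bound, for an arbitrary point $\mathbf{p}\in P$ lying outside $S$, the quantity $\|\mathbf{p}-\mathbf{c}\|-r$, where $\mathbf{c}$ denotes the common center of $S$ and $P$ (they share a center because $P\cup S$ is a single translate of $P'\cup S'$) and $r=\frac{\sqrt{d}}{2}n$ its radius. Since $S\subseteq P$, $d(\mathbf{p},S)$ equals $\max(\|\mathbf{p}-\mathbf{c}\|-r,0)$. Because $P$ is the intersection of half-spaces $\{\mathbf{x}:\mathbf{a}\cdot(\mathbf{x}-\mathbf{c})\le r\|\mathbf{a}\|\}$ with $\mathbf{a}$ running over $\mathcal{N}_\lambda(R)$, any single such constraint turns into an upper bound on $\|\mathbf{p}-\mathbf{c}\|$ controlled by the angle between $\mathbf{a}$ and $\mathbf{p}-\mathbf{c}$. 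The core of the argument is therefore to find one $\mathbf{b}\in\mathcal{N}_\lambda(R)$ whose direction is very close to that of $\mathbf{v}:=\mathbf{p}-\mathbf{c}$.

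Exactly such a vector is supplied by \Cref{dense distribution}: applied to the line through the origin parallel to $\mathbf{v}$, it produces $\mathbf{b}\in\mathcal{N}_\lambda(R)$ with $\|\mathbf{b}\|=(1-o(1))R$ and distance at most $(1+o(1))\lambda^{d-2}$ from that line. After replacing $\mathbf{b}$ by $-\mathbf{b}$ if necessary (both lie in $\mathcal{N}_\lambda(R)$), I may assume $\mathbf{b}\cdot\mathbf{v}>0$, and letting $\theta=\angle(\mathbf{b},\mathbf{v})$ I obtain
\[
\sin\theta \;=\; \frac{d(\mathbf{b},l)}{\|\mathbf{b}\|} \;\le\; (1+o(1))\frac{\lambda^{d-2}}{R} \;=\; o(1),
\]
using $\lambda^{d-2}/R=\Theta(n^{-1/(d+1)})$ from the definitions of $R$ and $\lambda$. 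The defining inequality $\mathbf{b}\cdot(\mathbf{p}-\mathbf{c})\le r\|\mathbf{b}\|$ then rewrites as $\|\mathbf{v}\|\cos\theta\le r$, so $\|\mathbf{p}-\mathbf{c}\|-r\le r(\sec\theta-1)$.

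Since $\theta\to 0$, I then expand $\sec\theta-1=(1+o(1))\sin^2\theta/2$ and substitute $r=\frac{\sqrt{d}}{2}n$, $R=\sqrt{d}\,n^{(d-1)/(d+1)}$ and $\lambda=(1-o(1))n^{1/(d+1)}$ to get
\[
\|\mathbf{p}-\mathbf{c}\|-r \;\le\; (1+o(1))\cdot\frac{r}{2}\cdot\frac{\lambda^{2(d-2)}}{R^2} \;=\; (1+o(1))\cdot\frac{1}{4\sqrt{d}}\,n^{(d-1)/(d+1)}.
\]
Taking the supremum over $\mathbf{p}\in P$ completes the argument.

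The only genuinely delicate step is the selection of $\mathbf{b}$: I need a primitive point of controlled length that is uniformly close in angle to \emph{every} direction from $\mathbf{c}$. This uniformity is precisely the content of \Cref{dense distribution}, which in turn rests on the careful definition of $\mathcal{N}_\lambda(R)$ and on the Baker--Harman--Pintz prime-gap result. Once that lemma is in hand, the remainder of the proof is a single half-space bound combined with the elementary expansion $\sec\theta-1\sim\theta^2/2$, and I expect no further obstacles beyond tracking the $(1\pm o(1))$ factors.
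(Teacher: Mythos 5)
Your proof is correct and takes essentially the same approach as the paper: both invoke \Cref{dense distribution} to produce a $\vb\in\N_\lambda(R)$ nearly parallel to the relevant radial direction, deduce that the radial distance is at most $\frac{\sqrt{d}}{2}n\sec\theta$ from the tangent constraint associated with $\vb$, and finish with $\sec\theta-1\sim\tfrac{1}{2}\sin^2\theta$. Your half-space inequality $\vb\cdot(\mathbf{p}-\mathbf{c})\le \frac{\sqrt d}{2}n\|\vb\|$ is simply a cleaner algebraic rendering of the paper's right-triangle construction with $\vo$, $\va_0$, $\vb_0$, so the underlying estimate is identical.
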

\begin{proof}
Assume that the center of the $d$-sphere $S$ is the origin $\vo$. As shown in \Cref{Fig.3}, since the hyperpolygon $P$ is a compact point set, there exists some point $\va\in P$ such that $d(\va,S)=\Delta(P,S)$. On the other hand, $d(\va,S)=\|\va\|-\frac{\sqrt{d}}{2}n$ as $\va\in P$ is outside $S$. Extend $\vo\va$ to get a line $l$ through the origin $\vo$. By \Cref{dense distribution}, there exists some point $\vb\in\N_\lambda(R)$ such that $\|\vb\|=(1-o(1))R$ and $d(\vb,l)\leq(1+o(1))\lambda^{d-2}$. By replacing $\vb$ with $-\vb$ if necessary, we have $d(\vb,\vo\va)=\min_{\vx\in \vo\va} d(\vb,\vx)\leq(1+o(1))\lambda^{d-2}$, that is, $\vb$ is close to the segment $\vo\va$.

Extend $\vo\vb$ to intersect $S$ at point $\vb_0$, then $\|\vb_0\|=\frac{\sqrt{d}}{2}n$. Since $\vb\in\N_\lambda(R)$, $P$ has a surface $\beta\in\P(\vb)$ such that $\vb_0=\beta\cap S$, i.e., $\vb_0$ is the tangent point of $\beta$ and $S$. By the convexity of $P$, $\va$ and $\vo$ are on the same side of $\beta$. Extend $\vo\va$ to intersect $\beta$ at point $\va_0$. Let $\phi=\angle(\va,\vb)=\angle(\va_0,\vb_0)$, then $\sin\phi=\frac{d(\vb,l)}{\|\vb\|}\leq(1+o(1))\frac{\lambda^{d-2}}{R}=O(n^{-\frac{1}{d+1}})=o(1)$ and $\cos\phi=1-o(1)$. This implies that $\phi$ is small and thus $\va$ is closer to $\beta$ than $\vo$. Then $\va\in \vo\va_0$, $\|\va_0\|\geq \|\va\|$ and
\[\Delta(P,S)=d(\va,S)=\|\va\|-\frac{\sqrt{d}}{2}n\leq \|\va_0\|-\frac{\sqrt{d}}{2}n.\]
It suffices to compute the value of $\|\va_0\|$.

Since $\vb_0$ is the tangent point of $\beta$ and $S$, we have $\angle(\va_0-\vb_0,\vo-\vb_0)=\frac{\pi}{2}$ and thus three points $\vo,\va_0,\vb_0$ form a right triangle. Then
\begin{align*}
\Delta(P,S)&\leq \|\va_0\|-\frac{\sqrt{d}}{2}n=\frac{\|\vb_0\|}{\cos\phi}-\frac{\sqrt{d}}{2}n=\frac{\sqrt{d}}{2}n(\frac{1}{\cos\phi}-1)=\frac{\sqrt{d}}{2}n\frac{\sin^2\phi}{\cos\phi(1+\cos\phi)}\\
&\leq(1+o(1))\frac{\sqrt{d}}{4}n\left((1+o(1))\frac{\lambda^{d-2}}{R}\right)^2\\
&=(1+o(1))\frac{1}{4\sqrt{d}}n^\frac{d-1}{d+1}.
\end{align*}
\end{proof}

\vspace{0.3cm}
Now we construct the first direction function
\[g_0(\vx):=\va_0\cdot(\vx-\vh)\]
 and a univariate polynomial $f_0$ provided by \Cref{f1} with
\begin{equation}\label{eq-f1}
  \text{domain }[0,b_0]:=[0,\sqrt{d}nR], \text{ and}~f_0(0)=m_0:=\frac{\pi^2}{6}4^d.
\end{equation}
Then compose $g_0$ into $f_0$ to get the first polynomial $p_0=f_0\circ g_0$. We see $g_0$ satisfies \textbf{Property A} and \textbf{Property C} as in the following lemma.

\vspace{0.5cm}
\begin{lemma}\label{g}
Given the direction function $g_0$ above, we have $g_0(H)\subset [0,\sqrt{d}nR)\cap\bbZ$.
\end{lemma}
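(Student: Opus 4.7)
The plan is to verify the three separate assertions packaged into $g_0(H)\subset[0,\sqrt{d}nR)\cap\bbZ$: integrality of values, nonnegativity, and the strict upper bound $\sqrt{d}nR$. All three fall out directly from the data that was already assembled in \textbf{Step 1} of the construction, so no new geometric machinery is needed; the only ingredients are Cauchy--Schwarz, the fact that $\va_0$ is an integer vector in $\N_\lambda(R)$, and the tangency arrangement of $\alpha_0$ with $H$.

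Integrality will be handled first, as it is immediate: by construction $\va_0\in\N_\lambda(R)\subset\bbZ^d$ (the definition of $\N_\lambda(R)$ only contains primitive integer points), and for every $\vx\in H$ we have $\vx,\vh\in[n]^d\subset\bbZ^d$. Hence $g_0(\vx)=\va_0\cdot(\vx-\vh)$ is a sum of products of integers and therefore lies in $\bbZ$.

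Next I would treat the lower bound. Recall from \textbf{Step 1} that $\alpha_0$ is a facet of the circumscribed hyperpolygon $P$ that is tangent to $H$, with $\va_0$ chosen to point to the side of $\alpha_0$ that contains $H$, and that $\vh$ was selected from $\alpha_0\cap H$. Since $\alpha_0=\{\vx:\va_0\cdot(\vx-\vh)=0\}$ and $H$ lies entirely on the $\va_0$-positive side of $\alpha_0$, every $\vx\in H$ satisfies $\va_0\cdot(\vx-\vh)\geq 0$, i.e.\ $g_0(\vx)\geq 0$.

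The upper bound is the last piece and is a one-line Cauchy--Schwarz estimate. Since $\vx,\vh\in[n]^d$, each coordinate of $\vx-\vh$ has absolute value at most $n-1$, so $\|\vx-\vh\|\leq\sqrt{d}(n-1)<\sqrt{d}n$. Combined with $\|\va_0\|\leq R$ from $\va_0\in\N_\lambda(R)$, this gives
\[
g_0(\vx)=\va_0\cdot(\vx-\vh)\leq\|\va_0\|\,\|\vx-\vh\|\leq R\sqrt{d}(n-1)<\sqrt{d}nR,
\]
which is the desired strict inequality. No step here is really an obstacle; the only thing one must be careful about is to observe that the diameter of $[n]^d$ is $\sqrt{d}(n-1)$ rather than $\sqrt{d}n$, which is exactly what furnishes the strict inequality $g_0(\vx)<\sqrt{d}nR$ and thereby places $g_0(H)$ inside the half-open interval $[0,\sqrt{d}nR)$.
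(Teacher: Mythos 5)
Your proof is correct and follows essentially the same three-step argument as the paper: integrality from $\va_0\in\bbZ^d$ and $\vx,\vh\in[n]^d$; nonnegativity from the tangency of $\alpha_0$ to $H$ with $\va_0$ pointing toward $H$; and the strict upper bound via Cauchy--Schwarz with $\|\va_0\|\leq R$ and the diameter bound on $[n]^d$. The only (harmless) difference is that you pin down the diameter of $[n]^d$ as exactly $\sqrt{d}(n-1)$, whereas the paper simply observes it is less than $\sqrt{d}n$.
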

\begin{proof} Let $\vx$ be an arbitrary point in $H$. First, since $\vh\in H \subset [n]^d$ and $\va_0\in \bbZ^d$, we have $g_0(\vx)=\va_0\cdot(\vx-\vh)\in \bbZ$. Second, since
$\va_0$ is the normal vector of $\alpha_0$ and points to the same side of $\alpha_0$ as $H$, then $\angle(\vx-\vh,\va_0)\leq \frac{\pi}{2}$ and $g_0(\vx)=\va_0\cdot(\vx-\vh)\geq 0$. Third, since the
diameter of $H$ is less than the diameter of $[n]^d$, which is less than $\sqrt{d}n$, then
 $\|\vx-\vh\|<\sqrt{d}n$ and $g_0(\vx)=\va_0\cdot(\vx-\vh)\leq \|\va_0\|\|\vx-\vh\|<\sqrt{d}nR$.
Thus we show that $g_0(H)\subset [0,\sqrt{d}nR)\cap\bbZ$.
\end{proof}

\subsection{Lattice and Mahler basis}

From now on, we fix $\vh\in \alpha_0\cap H$ as the origin of $E^d$ and assume that $\va_0$ begins from $\vh$. 
{Then $\alpha_0$ is a $(d-1)$-dimensional subspace of $E^d$ 
and thus the lattice $L(\va_0)\subset \alpha_0$.} By definition, $L(\va_0)$ is the collection of all integer points in $\alpha_0$, hence $L(\va_0)$ is the maximal integral $(d-1)$-lattice in $\alpha_0$.

Take a Mahler basis $\vy_1,\ldots,\vy_{d-1}$ of $L(\va_0)$ with respect to the unit ball \cite[Theorem 3.34]{tao2006additive}, where each $\vy_i$ begins from the origin $\vh$. Then $\vy_1,\ldots,\vy_{d-1}$ is also an integer basis of $\alpha_0$ as $L(\va_0)\subset \alpha_0$. By \cite[Corollary 3.35]{tao2006additive}, for $d\geq 3$,
\begin{align}\label{Mahler-basis}
{\prod_{i=1}^{d-1}\|\vy_i\|\leq\frac{2(d-1)!\Gamma(\frac{d-1}{2}+1)}{\pi^{(d-1)/2}}\det L(\va_0)\leq\frac{8\sqrt{2}}{81\pi}\frac{d^{3d/2}}{d^{1/2}(d-1)^{3/2}}\det L(\va_0):=\vartheta_d\det L(\va_0)}.
\end{align}

Recall that for $1\leq i\leq d-1$, the direction function $g_i(\vx)=\va_i\cdot(\vx-\vh)$ involves a unit direction vector $\va_i$. To construct $\va_i$, we need to introduce the ``height vector" of $\vy_i$ for $1\leq i\leq d-1$.
\vspace{0.5cm}\begin{definition}\label{defhv}
  We say $\vh_i$ is the \emph{height vector} of $\vy_i$ with respect to the basis $\vy_1,\ldots,\vy_{d-1}$ of $\alpha_0$, if $\vh_i$ is the projection of $\vy_i$ onto the orthogonal complementary space $\text{Span}(\vy_1,\ldots,\vy_{i-1},\vy_{i+1},\ldots,\vy_{d-1})^\bot$.
\end{definition}
\vspace{0.5cm}

Note that $\vh_i$  also begins from $\vh$. 
We call it height vector as we could think about the hypercube formed by $\vy_1,\ldots,\vy_{d-1}$.
For example, consider a parallelogram $\parallelogram BACD$ in $E^2$, then $\overrightarrow{AB},\overrightarrow{AC}$ form a basis. Let $\vh_C$ be the height vector of $\overrightarrow{AC}$ with respect to the basis. Obviously, $\|\vh_C\|$ is the height of $\parallelogram BACD$ with respect to the base side $AB$.

For these chosen height vectors $\vh_1,\ldots,\vh_{d-1}$, we expect that each $\vh_i$ is of large length for the following reason. Each $\va_i$ is supposed to be a unit vector obtained by perturbing $\va_0$ in the later construction. Since $\vh_i\bot \va_0$ (as $\vh_i\in \alpha_0$), if $\vh_i$ has small length, the projection of $\vh_i$ on $\text{Span}(\va_i)$ is small and thus the projection of $L(\va_0)$ on $\text{Span}(\va_i)$ is dense. Since each hyperplane orthogonal to $\va_i$ is a level set of $g_i$, $g_i(L(\va_0))$ as a point set in $\bbR$ is densely distributed and thus the sum $\sum_{\vx\in H\cap L(\va_0)}f_i\circ g_i(\vx)$ may be large. Then $p(\vh)$ may not be a peak value under the magnification of Eq. (\ref{eq-peak}). See details of arguments in \Cref{g_i} and \Cref{p slice}.

Indeed, each $\vh_i$ is of  length $\Omega(n^\frac{1}{d+1})$, which is confirmed by the following lemma.

\vspace{0.5cm}
\begin{lemma}\label{h_i}
Given the Mahler basis $\vy_1,\ldots,\vy_{d-1}$ of $L(\va_0)$ with height vectors $\vh_1,\ldots,\vh_{d-1}$ above, we have $\|\vh_i\|\geq \vartheta_d^{-1}\lambda$ for $1\leq i\leq d-1$.
\end{lemma}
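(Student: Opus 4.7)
The plan is to deduce this bound by combining the Mahler basis estimate on $\prod_i \|\vy_i\|$ with the standard volume-decomposition formula for a parallelepiped in terms of one edge's height and the base formed by the remaining edges, together with Hadamard's inequality and the short-vector lower bound of \Cref{lattice distance}.

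First, I would recall the geometric fact that the $(d-1)$-dimensional volume of the fundamental parallelepiped of $L(\va_0)$ admits, for every fixed index $i$, the factorization
\begin{equation*}
\det L(\va_0)=\|\vh_i\|\cdot V_i,
\end{equation*}
where $V_i$ is the $(d-2)$-dimensional volume of the parallelepiped spanned by $\vy_1,\ldots,\vy_{i-1},\vy_{i+1},\ldots,\vy_{d-1}$. This is exactly the ``base times height'' identity and is justified by the fact that $\vh_i$ is, by \Cref{defhv}, the component of $\vy_i$ orthogonal to the span of the remaining basis vectors. Applying Hadamard's inequality to that base gives $V_i\leq \prod_{j\neq i}\|\vy_j\|$, so rearranging yields the lower bound $\|\vh_i\|\geq \det L(\va_0)\big/\prod_{j\neq i}\|\vy_j\|$.

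Next I would invoke the Mahler basis estimate $\prod_{j=1}^{d-1}\|\vy_j\|\leq \vartheta_d\det L(\va_0)$ displayed in \eqref{Mahler-basis}, rewritten as $\prod_{j\neq i}\|\vy_j\|\leq \vartheta_d\det L(\va_0)/\|\vy_i\|$. Substituting this into the inequality from the previous step yields
\begin{equation*}
\|\vh_i\|\;\geq\;\frac{\det L(\va_0)\cdot\|\vy_i\|}{\vartheta_d\det L(\va_0)}\;=\;\vartheta_d^{-1}\|\vy_i\|.
\end{equation*}
Finally, since $\vy_i\in L(\va_0)$ is a nonzero lattice vector and $\va_0\in\N_\lambda(R)$, \Cref{lattice distance} gives $\|\vy_i\|\geq \lambda$, and hence $\|\vh_i\|\geq \vartheta_d^{-1}\lambda$, as required.

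I do not foresee a real obstacle here: the argument is short and purely geometric. The only point that deserves a sentence of care in the write-up is the base-times-height identity, since one must distinguish the $(d-1)$-dimensional volume $\det L(\va_0)$ inside $\alpha_0$ from the $(d-2)$-dimensional base volume $V_i$; once that is cleanly stated, the rest is one line of algebra plus two citations to the results already established in the paper.
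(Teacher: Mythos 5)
Your proposal is correct and follows essentially the same argument as the paper: both rest on the base-times-height decomposition $\det L(\va_0)=\|\vh_i\|\cdot V_i$, Hadamard's bound $V_i\leq\prod_{j\neq i}\|\vy_j\|$, the Mahler basis estimate \eqref{Mahler-basis}, and the short-vector lower bound $\|\vy_i\|\geq\lambda$ from \Cref{lattice distance}.
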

\begin{proof} Consider the $(d-1)$-hypercube in $\alpha_0$ formed by
 $\vy_1,\ldots,\vy_{d-1}$:  
\[V_0=\{\lambda_1\vy_1+\lambda_2\vy_2+\cdots+\lambda_{d-1}\vy_{d-1}:\lambda_i\in [0,1],1\leq i\leq d-1\}.\]
For  $1\leq i\leq d-1$, let $V_i$ be the $(d-2)$-undersurface of $V_0$ with respect to $\vy_i$, i.e., $V_i=V_0\cap\text{Span}(\vy_1,\ldots,\vy_{i-1},\vy_{i+1},\ldots,\vy_{d-1})$. We easily deduce the following conclusions on the volume of $V_i$, $0\leq i\leq d-1$.
\begin{itemize}
  \item Since $\{\vy_1,\ldots,\vy_{d-1}\}$ is a basis of $L(\va_0)$, the hypervolume $|V_0|=\det L(\va_0)$.
  \item Since $V_i$ ($1\leq i\leq d-1$) is formed by $\vy_1,\ldots,\vy_{i-1},\vy_{i+1},\ldots,\vy_{d-1}$, the hyperarea $|V_i|\leq \prod_{j\neq i}\|\vy_j\|$.
\end{itemize} Since $\va_0\in\N_\lambda(R)$, by \Cref{lattice distance}, $\|\vy_i\|\geq\lambda$ for $1\leq i\leq d-1$.
Then by definition of height vector,
\[\|\vh_i\|=\frac{|V_0|}{|V_i|}\overset{}{=}\frac{\det L(\va_0)}{|V_i|}\overset{}{\geq}\frac{\det L(\va_0)\cdot\|\vy_i\|}{\prod_{j=1}^{d-1}\|\vy_j\|}\overset{}{\geq}\vartheta_d^{-1}\lambda,\]
where the last inequality applies Eq. (\ref{Mahler-basis}).
\end{proof}

\subsection{Construction of $p_i=f_i\circ g_i,1\leq i\leq d-1$}

Recall that $\vh$ is the origin and $\vy_1,\ldots,\vy_{d-1}$ is a Mahler basis of $L(\va_0)$ and thus a basis of $\alpha_0$. Then for $1\leq i\leq d-1$, $U_i:=\text{Span}(\vy_1,\ldots,\vy_{i-1},\vy_{i+1},\ldots,\vy_{d-1})$ is a {$(d-2)$-dimensional subspace} of $E^d$. By definition of height vector, $\vh_i\bot U_i$. Since $\va_0\bot\alpha_0$ and $\vh_i,U_i\subset\alpha_0$, we have $\va_0\bot\vh_i,\va_0\bot U_i$, and thus the $2$-dimensional orthogonal complementary space $U_i^\bot=\text{Span}(\va_0,\vh_i),1\leq i\leq d-1$.

Now we construct the direction vector $\va_i$ for $1\leq i\leq d-1$. Define
\[\theta_i:=\arcsin\|\vh_i\|^{-1}.\] Then the
unit vector $\va_i$ is obtained by rotating $\va_0$ towards $\vh_i$ with a small angle $\theta_i$ in the space $\text{Span}(\va_0,\vh_i)$, see \Cref{Fig.2}. Let \[g_i(\vx):=\va_i\cdot(\vx-\vh).\]

Now set
\begin{equation}\label{eq:ab}
  a:=\frac{10}{33}\sqrt{d}\vartheta_d^2n^\frac{d-1}{d+1},~b:=\sqrt{d}n \text{ and } m:=\frac{2}{3}\pi^2(d-1).
\end{equation}
For each $1\leq i\leq d-1$, let $f_i:=f$ be the same univariate polynomials provided by \Cref{f2} with
$\text{domain }[-a,b] $ and $f_i(0)=m$.
Then compose $g_i$ into $f_i$ to get the rest polynomials $p_i=f_i\circ g_i$.

\vspace{0.3cm}
Indeed, our construction above endows direction functions $g_i$ with the three required properties, see the proof of \textbf{Property A} in \Cref{g_i(H)},  \textbf{Property B} in \Cref{g_i} and  \textbf{Property C} in \Cref{g_i L_k}.

\begin{figure}[H]
\centering
\includegraphics[width=0.5\textwidth]{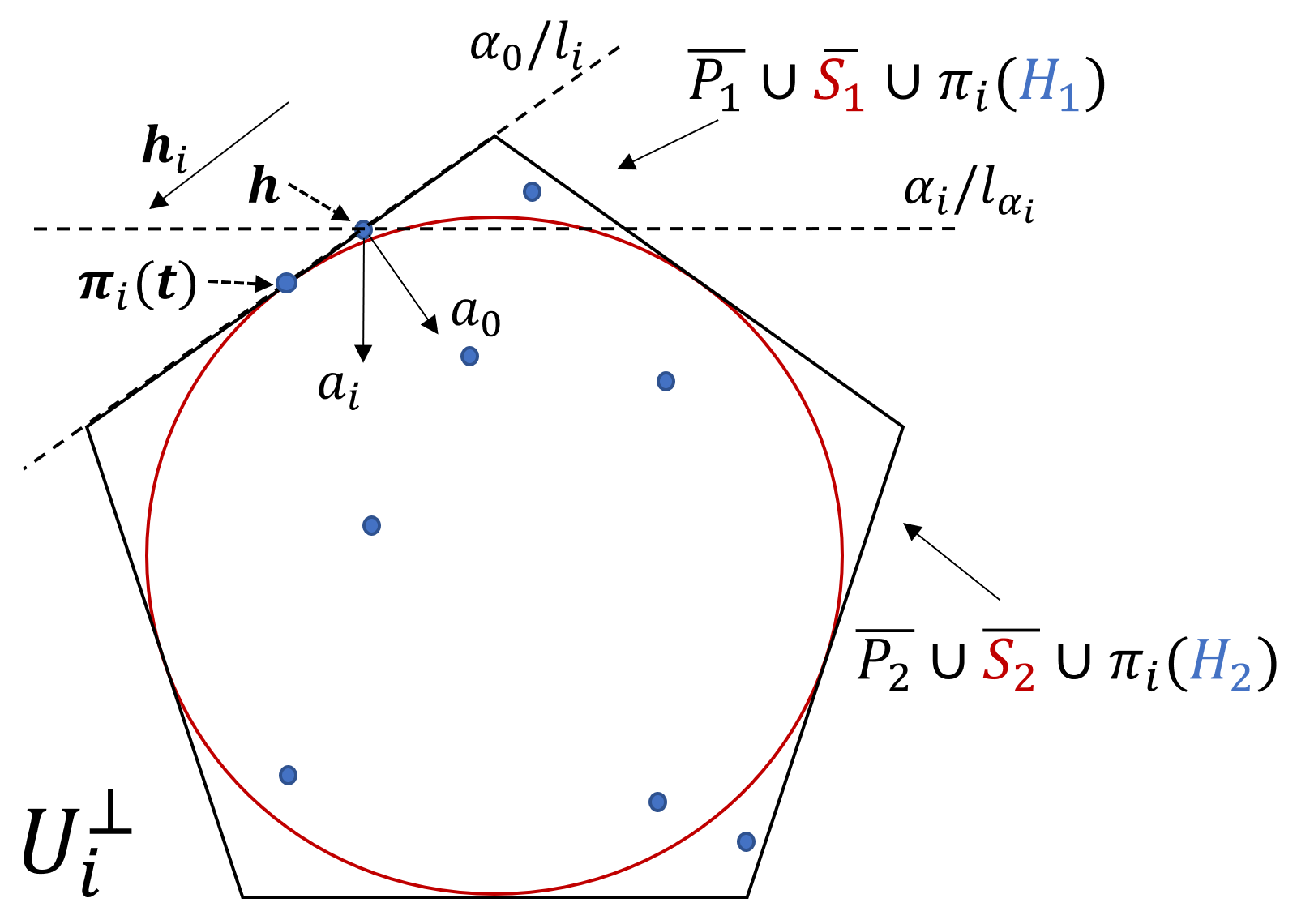}
\caption{\Cref{g_i(H)}}
\label{Fig.4}
\end{figure}

First, let us describe \textbf{Property A} of $g_i$ in a geometric point of view. Denote by $\alpha_i$ the $(d-1)$-hyperplane orthogonal to $\va_i$ and $\vh\in\alpha_i$. Then $g_i(\alpha_i)=0$. Since $\va_i$ is a unitized perturbation of $\va_0$, then $\alpha_i$ is a perturbation of $\alpha_0$. Define two subsets of $H$: 
\[H_1=\{\vx\in H:g_i(\vx)\leq 0\},\quad H_2=\{\vx\in H:g_i(\vx)\geq 0\}.\]
Then $H_1\cup H_2=H$ and  they are the two parts of $H$ partitioned by $\alpha_i$, where points of $H_2$ lie on the same side of $\va_i$  and points of $H_1$ lie on the other side, as shown in \Cref{Fig.4}. 
Recall that $\Delta(X,S)=\max_{\vx\in X}d(\vx,S)$ for any two nonempty sets $X,S\subset E^d$. To show that $g_i$ satisfies \textbf{Property A}, that is $g_i(H)\subset [-O(n^\frac{d-1}{d+1}),\sqrt{d}n]$, we need to show that $\Delta(H_1,\alpha_i)\leq O(n^\frac{d-1}{d+1})=o(n)$ and $\Delta(H_2,\alpha_i)\leq \sqrt{d}n$ as in the proof of \Cref{g_i(H)}. Hence all points in $H_1$ must be close to $\alpha_i$. This is intuitively true since before the perturbation $\alpha_0$ is tangent to the $d$-sphere $S$, and  $S$ is approximately tangent to $H$ (see \Cref{DPS}).


\vspace{0.5cm}
\begin{lemma}\label{g_i(H)} Given the dimension $d\geq 3$,
when $n$ is sufficiently large,
\[g_i(H)\subset [-a,b], 1\leq i\leq d-1.\]
\end{lemma}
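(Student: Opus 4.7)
The plan is to split into the trivial upper bound and the tighter lower bound. The upper bound $g_i(\vx)\le b=\sqrt{d}n$ is immediate from $\|\va_i\|=1$ and $\|\vx-\vh\|\le \sqrt{d}n$ (both $\vx$ and $\vh$ lie in $[n]^d$). It thus suffices to show $-g_i(\vx)\le a$ for every $\vx\in H$ with $g_i(\vx)\le 0$, i.e., for every $\vx\in H_1$.

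The main idea is to exploit that $\alpha_i$ strictly separates such an $\vx$ from the center $\vc$ of $S$. Writing $\va_i=\cos\theta_i\,\hat{\va}_0+\sin\theta_i\,\hat{\vh}_i$ and using $\hat{\va}_0\cdot(\vc-\vh)=\sqrt{d}n/2$ (since $\alpha_0$ is tangent to $S$ with $\hat\va_0$ pointing toward $\vc$) gives $\va_i\cdot(\vc-\vh)=\cos\theta_i\cdot\sqrt{d}n/2+\sin\theta_i\,(\hat{\vh}_i\cdot\vc)$. I will arrange $\hat{\vh}_i\cdot\vc\ge 0$ by replacing $\vy_i$ with $-\vy_i$ in the Mahler basis if necessary; this single-vector sign flip preserves the lattice $L(\va_0)$ and the Mahler-basis inequality, flips only $\vh_i$ (each $\vh_j$ with $j\neq i$ depends on $\mathrm{Span}(\vy_i)=\mathrm{Span}(-\vy_i)$, which is unchanged), so the conclusions of \Cref{lattice distance} and \Cref{h_i} remain intact. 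Under this sign choice, $\va_i\cdot(\vc-\vh)\ge \cos\theta_i\cdot\sqrt{d}n/2>0$ while $\va_i\cdot(\vx-\vh)\le 0$, so $\alpha_i$ strictly separates $\vx$ from $\vc$.

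Projecting $\vx$ and $\vc$ along $\va_i$ then gives $d(\vx,\alpha_i)+d(\vc,\alpha_i)\le \|\vx-\vc\|$. Combining with $\|\vx-\vc\|\le \sqrt{d}n/2+\Delta(P,S)$ (from $\vx\in H\subset P$ and \Cref{DPS}) and $d(\vc,\alpha_i)\ge \cos\theta_i\cdot\sqrt{d}n/2$ yields
\begin{equation*}
-g_i(\vx)\;=\;d(\vx,\alpha_i)\;\le\;\frac{\sqrt{d}}{2}n(1-\cos\theta_i)+\Delta(P,S).
\end{equation*}
Applying $1-\cos\theta_i=\sin^2\theta_i/(1+\cos\theta_i)=(1+o(1))\sin^2\theta_i/2$, $\sin\theta_i=1/\|\vh_i\|\le \vartheta_d/\lambda$ from \Cref{h_i}, $\lambda=(1-o(1))n^{1/(d+1)}$ from \Cref{setRlambd}, and the bound on $\Delta(P,S)$ from \Cref{DPS}, one obtains
\begin{equation*}
-g_i(\vx)\;\le\;(1+o(1))\left(\frac{\sqrt{d}\vartheta_d^2}{4}+\frac{1}{4\sqrt{d}}\right)n^{(d-1)/(d+1)}.
\end{equation*}
The proof then concludes by the arithmetic check $\sqrt{d}\vartheta_d^2/4+1/(4\sqrt{d})\le (10/33)\sqrt{d}\vartheta_d^2$ for every $d\ge 3$, equivalently $1/(d\vartheta_d^2)\le 7/33$; this is tight at $d=3$ (where $\vartheta_3=4/\pi$ gives $\pi^2/48\approx 0.2056<0.2121\approx 7/33$) and is comfortable for $d\ge 4$ since $\vartheta_d$ grows super-exponentially.

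The main obstacle is the sign-choice step. Without it one is forced to bound $\sin\theta_i\,|\hat{\vh}_i\cdot\vc|$, and the best unconditional estimate $|\hat{\vh}_i\cdot\vc|\le \|\vh-\text{(tangent point of $\alpha_0$ with $S$)}\|\le O(n^{d/(d+1)})$ (itself coming from $\vh\in H\subset P$ together with \Cref{DPS}) produces an extra contribution of order $(\vartheta_d/2)\,n^{(d-1)/(d+1)}$, which already exceeds $a$ at $d=3$. Recognizing that the free sign choice within the Mahler basis kills this term is exactly what makes the constant $10/33$ attainable.
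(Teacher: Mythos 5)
Your argument is correct and reproduces the paper's key estimate $\Delta(H_1,\alpha_i)\le\frac{\sqrt{d}}{2}n(1-\cos\theta_i)+\Delta(P,S)$ with the same sign normalization of the Mahler basis vector $\vy_i$: your condition $\hat{\vh}_i\cdot(\vc-\vh)\ge 0$, where $\vc$ is the center of $S$, is equivalent to the paper's choice that $\vh_i$ point in the same direction as $\pi_i(\mathbf{t})-\vh$, since $\vc-\mathbf{t}$ is parallel to $\va_0$ and hence orthogonal to $\vh_i$. What differs is how the estimate is derived. The paper projects onto the $2$-plane $U_i^\perp$, works with the projected circle $\overline{S}$ (boundary of $\pi_i(S)$) and polygon $\overline{P}$, and splits into two cases according to whether the line $l_{\alpha_i}$ cuts the circle, using that the minor arc has central angle at most $2\theta_i$. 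You instead establish the separation inequality $d(\vx,\alpha_i)+d(\vc,\alpha_i)\le\|\vx-\vc\|$, compute $d(\vc,\alpha_i)\ge\cos\theta_i\cdot\frac{\sqrt{d}}{2}n$ directly from the rotation formula $\va_i=\cos\theta_i\,\hat{\va}_0+\sin\theta_i\,\hat{\vh}_i$ and the sign choice, and bound $\|\vx-\vc\|\le\frac{\sqrt{d}}{2}n+\Delta(P,S)$ from $\vx\in H\subset P$. This avoids the planar projection picture and the case distinction entirely, and your closing remark correctly identifies why the sign choice is load-bearing: without it the extra term of order $(\vartheta_d/2)\,n^{(d-1)/(d+1)}$ coming from $\|\vh-\mathbf{t}\|=O(n^{d/(d+1)})$ would already dominate $a$ at $d=3$. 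Both arguments finish identically, invoking \Cref{DPS}, \Cref{h_i}, and the numerical check $1/(d\vartheta_d^2)\le 7/33$ for $d\ge 3$, which is tight at $d=3$ where $\vartheta_3=4/\pi$.
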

\begin{proof}
Since $\va_i$ is a unit normal vector of $\alpha_i$ beginning from $\vh\in\alpha_i$, and $H=H_1\cup H_2$ with points of $H_2$ lying on the same side of $\va_i$  and $H_1$ the other side,
\[\min g_i(H)=\min \va_i\cdot(H_1-\vh)\geq-\|\va_i\|\cdot \Delta(H_1,\alpha_i)=-\Delta(H_1,\alpha_i),\]
\[\max g_i(H)=\max \va_i\cdot(H_2-\vh)\leq\|\va_i\|\cdot \Delta(H_2,\alpha_i)=\Delta(H_2,\alpha_i).\]
Then $g_i(H)\subset[-\Delta(H_1,\alpha_i),\Delta(H_2,\alpha_i)]$. Hence it suffices to show that $ \Delta(H_1,\alpha_i)\leq a=O(n^\frac{d-1}{d+1})$ and $\Delta(H_2,\alpha_i)\leq b=\sqrt{d}n$. The latter inequality is trivial since $H$ is of diameter less than $\sqrt{d}n$. It is left to prove $ \Delta(H_1,\alpha_i)\leq a$. To simplify the computation, we first modify the basis $\vy_1,\ldots,\vy_{d-1}$ by choosing a proper $\vy_i$ from each pair $\{\vy_i,-\vy_i\}$ as below.

{

Let $\pi_i:E^d\rightarrow U_i^\bot$ be the projection on $U_i^\bot$. Then $\pi_i(U_i)=\vh$ and $\pi_i(\alpha_0)=l_i$ is a line as shown in \Cref{Fig.4}. Recall that $\alpha_0$ is tangent to $S$. Let $\mathbf{t}=\alpha_0\cap S$ be the tangent point, then $\pi_i(\mathbf{t})\in l_i$. Since $\va_i$ is the normal vector of $\alpha_i$ and $\va_i\bot U_i$, we have $U_i\subset\alpha_i$. Since $\pi_i(U_i)$ degenerates into a point, $\pi_i(\alpha_i)$ degenerates into a line in $U_i^\bot$. Let $l_{\alpha_i}=\pi_i(\alpha_i)$ denote the line and let $\overline{S}$ denote the boundary circle of $\pi_i(S)$. Now take $\vy_i$ from $\{\vy_i,-\vy_i\}$ such that $\vh_i$ points to the same direction as $\pi_i(\mathbf{t})-\vh$. Then as shown in \Cref{Fig.4}, $l_{\alpha_i}$ slices the circle $\overline{S}$ into two arcs with the minor arc of central angle not more than $2\theta_i$ (the central angle is exactly $2\theta_i$ if and only if $\pi_i(\mathbf{t})=\vh$), which is needed in \textbf{Case 2} later.}

Let $l_{\va_i}$ denote the line spanned by $\va_i$, which is a $1$-subspace of $E^d$. Then $l_{\va_i}\subset U_i^\bot$ since $\va_i\in U_i^\bot$. Let $\tau_i:E^d\rightarrow l_{\va_i}$ be the projection on $l_{\va_i}$, then for any point $\vx\in E^d$,
\[d(\vx,\alpha_i)=d(\tau_i(\vx),\tau_i(\alpha_i))=d(\tau_i(\vx),\vh).\]
This implies $\Delta(H_1, \alpha_i)=\max_{\vx\in H_1}d(\tau_i(\vx),\vh)$.

Since $l_{\alpha_i}\bot l_{\va_i}$, we have $\tau_i(l_{\alpha_i})=\vh$. Since $l_{\va_i}\in U_i^\bot$, we have $\tau_i=\tau_i\circ\pi_i$. Then
\[d(\tau_i(\vx),\vh)=d(\tau_i\circ\pi_i(\vx),\tau_i(l_{\alpha_i}))=d(\pi_i(\vx),l_{\alpha_i}),\]
and $\Delta(H_1, \alpha_i)=\max_{\vx\in H_1}d(\pi_i(\vx),l_{\alpha_i})={\Delta}(\pi_i(H_1),l_{\alpha_i})$. 
 Therefore, it suffices to compute ${\Delta}(\pi_i(H_1),l_{\alpha_i})$ in $U_i^\bot$.

Recall that the circle $\overline{S}$ is the boundary of $\pi_i(S)$ and let the polygon $\overline{P}$ be the boundary of $\pi_i(P)$. Assume that the line $l_{\alpha_i}$ divides $\overline{P}$ into $\overline{P_1}$ and $\overline{P_2}$, divides $\overline{S}$ into $\overline{S_1}$ and $\overline{S_2}$, where $\overline{P_1}$ and $\overline{S_1}$ are on the same side as $\pi_i(H_1)$ of $l_{\alpha_i}$, as shown in \Cref{Fig.4}.

\textbf{Case 1.} $\overline{S_1}=\emptyset$. Then $\pi_i(S)$ and $\pi_i(H_1)$ are on different sides of $l_{\alpha_i}$, then $S$ and $H_1$ are on different sides of $\alpha_i$, then
\[\Delta(H_1,\alpha_i)\leq \Delta(H_1,S)\leq \Delta(P,S),\]
where the second inequality holds since $H_1$ is inside $P$ and outside $S$.

\textbf{Case 2.} $\overline{S_1}\neq\emptyset$. Then $\overline{S_1}$ and $\overline{S_2}$ are two nontrivial arcs. {By the choice of $\vy_i$ above}, $\overline{S_1}$ has a central angle not more than $2\theta_i$, then $\Delta(\overline{S_1},l_{\alpha_i})\leq\frac{\sqrt{d}}{2}n(1-\cos\theta_i)$. Then
\[\Delta(H_1,\alpha_i)={\Delta}(\pi_i(H_1),l_{\alpha_i})\leq {\Delta}(\overline{P_1},l_{\alpha_i})\leq \Delta(\overline{P},\overline{S})+\Delta(\overline{S_1},l_{\alpha_i})\leq \Delta(P,S)+\frac{\sqrt{d}}{2}n(1-\cos\theta_i).\]

Combining these two cases, we have
\[\Delta(H_1,\alpha_i)\leq \Delta(P,S)+\frac{\sqrt{d}}{2}n(1-\cos\theta_i).\]
By \Cref{DPS}, $\Delta(P,S)\leq (1+o(1))\frac{1}{4\sqrt{d}}n^\frac{d-1}{d+1}$. On the other hand, by \Cref{h_i}, $\|\vh_i\|\geq \vartheta_d^{-1}\lambda=\Omega(n^{\frac{1}{d+1}})$, then $\sin\theta_i=\|\vh_i\|^{-1}=o(1)$, $\cos\theta_i=1-o(1)$. Hence
\[{\frac{\sqrt{d}}{2}n(1-\cos\theta_i)=\frac{\sqrt{d}}{2}n\frac{\sin^2\theta_i}{1+\cos\theta_i}=\frac{\sqrt{d}}{2}n\frac{\|\vh_i\|^{-2}}{2-o(1)}\leq(1+o(1))\frac{\sqrt{d}}{4}\vartheta_d^2n^\frac{d-1}{d+1}.}\]
Observe that $\frac{1}{4\sqrt{d}}\leq \frac{\pi^2}{192}\sqrt{d}\vartheta_d^2$ when $d\geq 3$, finally we have
\begin{align*}
\Delta(H_1,\alpha_i)&\leq \Delta(P,S)+\frac{\sqrt{d}}{2}n(1-\cos\theta_i)
\leq(1+o(1))\frac{1}{4\sqrt{d}}n^\frac{d-1}{d+1}+(1+o(1))\frac{\sqrt{d}}{4}\vartheta_d^2n^\frac{d-1}{d+1}\\
&<\frac{10}{33}\sqrt{d}\vartheta_d^2n^\frac{d-1}{d+1}=a.
\end{align*}
\end{proof}
\vspace{0.5cm}

Second, we show that \textbf{Property B} holds, i.e., the map $(g_0,g_1,\ldots,g_{d-1})$ is injective on $E^d$.

Recall that $\vh$ is the origin and $\vy_1,\ldots,\vy_{d-1}$ is a basis of $L(\va_0)$, then $L(\va_0)=\vh+\bbZ\vy_1+\cdots+\bbZ\vy_{d-1}$ and each $\vx\in L(\va_0)$ has a unique integral coordinate representation $\vx=(x_1,\ldots,x_{d-1})$ such that $\vx=\vh+x_1\vy_1+\cdots+x_{d-1}\vy_{d-1}$. The following lemma reveals the relationship between the direction function $g_i$ and the coordinate $(x_1,\ldots,x_{d-1})$.

\vspace{0.5cm}
\begin{lemma}\label{g_i}
For each $\vx\in L(\va_0)$ of coordinate $(x_1,\ldots,x_{d-1})$ with respect to the basis $\vy_1,\ldots,\vy_{d-1}$, we have $g_i(\vx)=x_i$ for $1\leq i\leq d-1$.
\end{lemma}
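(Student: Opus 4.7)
The plan is to decompose $\vx-\vh$ in the basis $\vy_1,\ldots,\vy_{d-1}$ and exploit the orthogonality relations built into the construction of $\va_i$. Since $\vh$ is the origin of $E^d$, the hypothesis $\vx = \vh + x_1\vy_1 + \cdots + x_{d-1}\vy_{d-1}$ gives
\[
g_i(\vx) = \va_i\cdot(\vx-\vh) = \sum_{j=1}^{d-1} x_j(\va_i\cdot\vy_j),
\]
so the whole statement reduces to showing $\va_i\cdot\vy_j = \delta_{ij}$ for $1\leq i,j\leq d-1$.

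For the off-diagonal case $j\neq i$, I would argue that $\va_i\in U_i^\bot$ and $\vy_j\in U_i$, so $\va_i\cdot\vy_j=0$. Indeed, $\va_0\bot\alpha_0\supset U_i$, and $\vh_i\bot U_i$ by the definition of the height vector; since $\va_i\in\text{Span}(\va_0,\vh_i)$, it is a linear combination of two vectors both orthogonal to $U_i$ and hence $\va_i\in U_i^\bot$. The inclusion $\vy_j\in U_i$ for $j\neq i$ is immediate from the definition $U_i=\text{Span}(\vy_1,\ldots,\vy_{i-1},\vy_{i+1},\ldots,\vy_{d-1})$.

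For the diagonal case $j=i$, I would split $\vy_i$ into its projection onto $U_i^\bot$ and $U_i$, namely $\vy_i = \vh_i + \vu_i$ with $\vu_i\in U_i$. Since $\va_i\in U_i^\bot$ kills $\vu_i$, we get $\va_i\cdot\vy_i = \va_i\cdot\vh_i$, reducing the problem to a two-dimensional computation in the plane $\text{Span}(\va_0,\vh_i)$. In this plane $\va_0\bot\vh_i$ (as $\vh_i\in\alpha_0$ and $\va_0\bot\alpha_0$), so we may write the unit vector $\va_i$ in the orthonormal frame $\{\va_0/\|\va_0\|,\vh_i/\|\vh_i\|\}$ as
\[
\va_i = \cos\theta_i\cdot\frac{\va_0}{\|\va_0\|} + \sin\theta_i\cdot\frac{\vh_i}{\|\vh_i\|}.
\]
Using $\sin\theta_i = \|\vh_i\|^{-1}$ (the very choice that fixes the rotation angle), I get $\va_i\cdot\vh_i = \sin\theta_i\cdot\|\vh_i\| = 1$, which closes the argument.

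There is no real obstacle here: the lemma is essentially a sanity check that the angle $\theta_i = \arcsin\|\vh_i\|^{-1}$ was designed precisely so that $\va_i$ acts as the dual basis vector to $\vy_i$ along the lattice $L(\va_0)$. The only care needed is bookkeeping of the orthogonality relations $\va_0\bot\alpha_0$, $\vh_i\bot U_i$, and $\vy_j\in U_i$ for $j\neq i$, which together force $\va_i\in U_i^\bot$ and reduce the final computation to the chosen value of $\sin\theta_i$.
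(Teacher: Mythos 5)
Your proposal is correct and follows essentially the same route as the paper: both reduce the claim to the duality relation $\va_i\cdot\vy_j=\delta_{ij}$, dispose of $j\neq i$ via $\va_i\in U_i^\bot$ and $\vy_j\in U_i$, and handle $j=i$ by decomposing $\vy_i$ orthogonally and computing $\va_i\cdot\vh_i=\|\vh_i\|\sin\theta_i=1$. Your explicit orthonormal-frame expression for $\va_i$ is just a restatement of the paper's $\|\va_i\|\|\vh_i\|\cos\angle(\va_i,\vh_i)=\|\vh_i\|\sin\theta_i$ computation.
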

\begin{proof}
Since  $\va_i\in U_i^\bot$ and $\vy_j\in U_i$ if $ j\neq i$, we have $\va_i\bot\vy_j$ for $j\neq i$. Recall $\vh_i\in U_i^\bot$. Take the orthogonal decomposition $\vy_i=\vh_i+\vy'_i$, where $\vy'_i\in U_i$. From $\vx-\vh=x_1\vy_1+\ldots+x_{d-1}\vy_{d-1}$ and $\va_i\bot\vy_j~(\forall j\neq i)$, we get
\[g_i(\vx)=\va_i\cdot(\vx-\vh)=x_i\va_i\cdot\vy_i
=x_i\va_i\cdot\vh_i=x_i\|\va_i\|\|\vh_i\|\cos\angle(\va_i,\vh_i)
=x_i\|\vh_i\|\sin\theta_i=x_i\|\vh_i\|\|\vh_i\|^{-1}=x_i.\]
\end{proof}

\begin{remark}\label{rem}
Similarly, since $\vy_1,\ldots,\vy_{d-1}$ is a basis of $\alpha_0$, for each $\vx\in\alpha_0$ of coordinate $(x_1,\ldots,x_{d-1})$ with respect to the basis, $g_i(\vx)=x_i$ holds for $1\leq i\leq d-1$. Then
\[\vx\neq\vy\in\alpha_0~\Leftrightarrow~(x_1,\ldots,x_{d-1})\neq(y_1,\ldots,y_{d-1})~\Leftrightarrow~
(g_1(\vx),\ldots,g_{d-1}(\vx))\neq(g_1(\vy),\ldots,g_{d-1}(\vy)).\]
This implies that the map $(g_1,\ldots,g_{d-1}):E^d\rightarrow E^{d-1},\vx\mapsto (g_1(\vx),\ldots,g_{d-1}(\vx))$ is injective restricted on $\alpha_0$. By the linearity of $g_i$ and that $\alpha_0$ is a level set of $g_0$, we immediately arrive at \textbf{Property B} that the map $(g_0,g_1,\ldots,g_{d-1}):E^d\rightarrow E^d,~\vx\mapsto (g_0(\vx),g_1(\vx),\ldots,g_{d-1}(\vx))$ is injective.

Intuitively, this conclusion indeed results from that  direction vectors $\va_i$ are constructed based on the basis $\vy_1,\ldots,\vy_{d-1}$ so that $\va_i$ inherit the linear independence of this basis.
\end{remark}
\begin{remark} Since $L(\va_0)$ is the maximal integral $(d-1)$-lattice in $\alpha_0= g_0^{-1}(0)$,
by \Cref{g_i}, we have $g_i(H\cap g_0^{-1}(0))=g_i(H\cap L(\va_0))\subset\bbZ,1\leq i\leq d-1$. That is, \textbf{Property C} holds for the case $k=0$.
\end{remark}

\vspace{0.5cm}
Third, we show that $g_i$ satisfies \textbf{Property C}, i.e., $g_i(H\cap g_0^{-1}(k))$ is contained in a translate of $\bbZ$ for $1\leq i\leq d-1,k\in \bbZ$.

When $k=0$, we already have $g_i(H\cap g_0^{-1}(0))\subset\bbZ$. For general $k\in \bbZ$, denote
\[P_k:=g_0^{-1}(k),\text{ and }L_k:=P_k\cap\bbZ^d.\]
Then $P_k\in \P(\va_0)$ is an integer-valued level set of $g_0$ and thus $P_k$ is parallel to $\alpha_0$. In particular, $P_0=\alpha_0$ and $L_0=L(\va_0)$. Since $H\subset \bbZ^d$, \textbf{Property C} can be written as $g_i(H\cap L_k)\subset\bbZ+\epsilon_{ik}$.

We claim that $L_k$ is a translate of the lattice $L(\va_0)$. By the periodicity of the lattice $\bbZ^d$, and $L(\va_0)\subset\bbZ^d$ and that $P_k$ is parallel to $\alpha_0$, it suffices to show that there exists an integer point $\vx_k$ in $P_k$. Indeed, since $\va_0$ is primitive, there exists an integer point $\vy$ such that $\va_0\cdot\vy=1$. Take the integer point $\vx_k=(k+\va_0\cdot\vh)\vy$ to get $g_0(\vx_k)=\va_0\cdot((k+\va_0\cdot\vh)\vy-\vh)=k$, which implies that $\vx_k\in P_k$.

By the claim, $\vy_1,\ldots,\vy_{d-1}$ is also a basis of each lattice $L_k$.

By $g_0(H)\subset\bbZ$ from \Cref{g}, we have $H\subset\bigcup_{k\in\bbZ}L_k$. In fact, $k$ belongs to $ [0,\sqrt{d}nR) \cap \bbZ$. Hence we can slice $H$ into lattices $L_k$ and study each $g_i(L_k)$ to get the range of $g_i(H)$.
The following lemma tells that $g_i(L_k)$ has a good distribution and then \textbf{Property C} holds.
~\\
\begin{lemma}\label{g_i L_k}
(1) For each $k\in \bbZ$, there exists a point $\mathbf{o}_k\in L_k$ such that $g_i(\mathbf{o}_k)=\epsilon_{ik}\in(-\frac{1}{2},\frac{1}{2}]$ for $1\leq i\leq d-1$.

(2) Suppose the basis $\vy_1,\ldots,\vy_{d-1}$ of $L_k$ begins from $\mathbf{o}_k$, then for each $\vx\in L_k$ of coordinate $(x_1,\ldots,x_{d-1})$ with respect to the basis, we have $g_i(\vx)=x_i+\epsilon_{ik}$ for $1\leq i\leq d-1$.
\end{lemma}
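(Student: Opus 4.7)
The plan is to exploit the fact that $L_k$ is a coset of the sublattice $L(\va_0)$, combined with the identity $g_i(\vx)=x_i$ already established for $\vx\in L(\va_0)$ in \Cref{g_i}. The key structural fact extracted from that proof is $\va_i\cdot\vy_j=\delta_{ij}$ for $1\leq i,j\leq d-1$: each $\va_i$ reads off the $i$-th coordinate in the Mahler basis $\vy_1,\ldots,\vy_{d-1}$. Consequently, translating a point of $L_k$ by a lattice vector $\vz=z_1\vy_1+\cdots+z_{d-1}\vy_{d-1}\in L(\va_0)$ shifts each $g_i$ by exactly the integer $z_i$, and these $d-1$ shifts can be chosen independently.

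For part (1), I would start from the explicit integer point $\vx_k=(k+\va_0\cdot\vh)\vy\in L_k$ exhibited just before the lemma and set $c_i:=g_i(\vx_k)\in\bbR$ for $1\leq i\leq d-1$. Pick the unique integer $z_i$ with $c_i+z_i\in(-\tfrac12,\tfrac12]$, form $\vz:=z_1\vy_1+\cdots+z_{d-1}\vy_{d-1}\in L(\va_0)$, and put $\mathbf{o}_k:=\vx_k+\vz$. Since $L_k=\vx_k+L(\va_0)$, the point $\mathbf{o}_k$ lies in $L_k$, and by linearity of $g_i$ together with $\va_i\cdot\vy_j=\delta_{ij}$,
\[g_i(\mathbf{o}_k)=g_i(\vx_k)+\va_i\cdot\vz=c_i+z_i=:\epsilon_{ik}\in(-\tfrac12,\tfrac12].\]

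For part (2), any $\vx\in L_k$ with basis-coordinate representation based at $\mathbf{o}_k$ has the form $\vx=\mathbf{o}_k+x_1\vy_1+\cdots+x_{d-1}\vy_{d-1}$ with $x_j\in\bbZ$. Decomposing
\[g_i(\vx)=\va_i\cdot(\mathbf{o}_k-\vh)+\va_i\cdot\bigl(x_1\vy_1+\cdots+x_{d-1}\vy_{d-1}\bigr)\]
and applying $\va_i\cdot\vy_j=\delta_{ij}$ once more yields $g_i(\vx)=g_i(\mathbf{o}_k)+x_i=\epsilon_{ik}+x_i$, as claimed.

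I anticipate no serious obstacle here: the entire lemma boils down to recognizing that the restriction of each $g_i$ to $L_k$ is the affine function sending a point to its $i$-th basis coordinate plus a coset-dependent constant, and that this constant can be normalized into $(-\tfrac12,\tfrac12]$ by rechoosing the origin of $L_k$ within its coset. The essential linear-algebraic input, namely $\va_i\perp\vy_j$ for $j\neq i$ and $\va_i\cdot\vy_i=1$, was already secured in \Cref{g_i}, so the argument reduces to a short coset-translation calculation.
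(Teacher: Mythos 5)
Your proof is correct and follows essentially the same route as the paper: both arguments rest on the identity $\va_i\cdot\vy_j=\delta_{ij}$ established in the proof of \Cref{g_i}, and both normalize an arbitrary starting point of $L_k$ into the fundamental window by translating within $L(\va_0)$. The only cosmetic differences are that the paper starts from an arbitrary $\vx_0\in L_k$ (rather than your explicit $\vx_k$) and writes the shift as a subtraction rather than an addition, and that the paper dismisses part (2) as ``similar to Lemma \ref{g_i}'' where you spell out the one-line computation.
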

\begin{proof}
(1) Take an arbitrary point $\vx_0\in L_k$ and consider $g_i(\vx_0)$. Write $g_i(\vx_0)=m_i+\epsilon_{ik}$ such that $m_i\in\bbZ$ and $\epsilon_{ik}\in(-\frac{1}{2},\frac{1}{2}]$ for $1\leq i\leq d-1$. Let $\mathbf{o}_k=\vx_0-m_1\vy_1-\cdots-m_{d-1}\vy_{d-1}\in L_k$. Then for $1\leq i\leq d-1$, since $\va_i\cdot\vy_j=\delta_{ij}$ (by the proof of \Cref{g_i}),
\[g_i(\mathbf{o}_k)=\va_i\cdot(\vx_0-\sum_{j=1}^{d-1}m_j\vy_j-\vh)
=g_i(\vx_0)-\sum_{j=1}^{d-1}m_j\va_i\cdot\vy_j=g_i(\vx_0)-m_i=\epsilon_{ik}.\]

(2) Similar to the proof of \Cref{g_i}.
\end{proof}

\vspace{0.3cm}

\Cref{g_i L_k} implies that $g_i(L_k)\subset\bbZ+\epsilon_{ik}$ and thus $g_i(H\cap L_k)\subset g_i(L_k)\subset\bbZ+\epsilon_{ik}$, which is \textbf{Property C}. Compose $g_i$ into $f_i$ to get the $i$-th polynomial $p_i=f_i\circ g_i$, $1\leq i\leq d-1$, and finally let $p=p_0p_1\cdots p_{d-1}$.

\vspace{0.5cm}
\section{Proof of \Cref{d_polynomial}}
In this section, we prove that the polynomial $p=p_0p_1\cdots p_{d-1}$  constructed in Section~\ref{sec:con} satisfies \Cref{d_polynomial}. That is, the polynomial $p$ has the following properties:
\begin{itemize}
  \item[(a)]$p$ has a peak value at the point $\vh$;
  \item[(b)]$p$ has a low degree $O(n^\frac{d}{d+1})$.
\end{itemize}

\subsection{Peak value}
The idea to show that $p$ has a peak value at the point $\vh$ is as follows. First, we show that $|p(\vx)|$ must have a small summation over points in each slice $L_k\cap H$ with $k\geq 1$; see \Cref{p slice}. Note that when $k\geq 1$, each summation excludes the value at the point $\vh$, which is in the slice $L_0\cap H$. Next, we summarize $|p(\vx)|$ over all slices of $H$ to see that $p$ has a peak value $p(\vh)$; see \Cref{p(H)}.

\vspace{0.5cm}
\begin{lemma}\label{p slice}
For each integer $k\in[1,\sqrt{d}nR)$, we have
\[\sum_{\vx\in L_k\cap H}|p(\vx)|<\frac{1}{k^2}(4m+\pi^2-4)^{d-1}.\]
\end{lemma}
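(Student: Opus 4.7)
The plan is to factor $p(\vx) = f_0(g_0(\vx)) \prod_{i=1}^{d-1} f_i(g_i(\vx))$, peel off the $f_0$ contribution as a scalar that depends only on $k$, and then decouple the remaining product via the injectivity built into \textbf{Property B}, so that each of the $d-1$ one-dimensional sums can be estimated from the pulse decay of $f_i$ together with \textbf{Property C}. Concretely, every $\vx \in L_k$ satisfies $g_0(\vx) = k$, and since $k \in [1, \sqrt{d}nR] = [1, b_0]$, \Cref{f1}(b) gives $|f_0(k)| \leq 1/k^2$. Pulling this scalar out leaves
\[
\sum_{\vx \in L_k \cap H} |p(\vx)| \leq \frac{1}{k^2} \sum_{\vx \in L_k \cap H} \prod_{i=1}^{d-1} |f_i(g_i(\vx))|.
\]

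Next I would invoke \textbf{Property B}: restricted to $L_k \subset g_0^{-1}(k)$, the map $\vx \mapsto (g_1(\vx),\ldots,g_{d-1}(\vx))$ is injective, so the tuples $(g_1(\vx),\ldots,g_{d-1}(\vx))$ for $\vx \in L_k \cap H$ are pairwise distinct inside $\prod_{i=1}^{d-1} g_i(L_k \cap H)$. This yields the factorization
\[
\sum_{\vx \in L_k \cap H} \prod_{i=1}^{d-1} |f_i(g_i(\vx))| \leq \prod_{i=1}^{d-1} \Bigl(\sum_{z \in g_i(L_k \cap H)} |f_i(z)|\Bigr),
\]
reducing the lemma to showing that each inner sum is at most $4m + \pi^2 - 4$. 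By \textbf{Property C} (\Cref{g_i L_k}) the admissible $z$ lie in $(\bbZ + \epsilon_{ik}) \cap [-a,b]$ with $\epsilon_{ik} \in (-1/2, 1/2]$, and \Cref{f2} gives $|f_i(0)| = m$ together with $|f_i(z)| < \min(4m, 1/z^2)$ for $z \in [-a,b]\setminus\{0\}$. When $\epsilon_{ik} = 0$ the bound $m + \sum_{n \neq 0} 1/n^2 = m + \pi^2/3$ is at most $4m + \pi^2 - 4$ since $m \geq 1$; when $\epsilon_{ik} \neq 0$ every term is nonzero and applying the Mittag-Leffler identity $\sum_{n\in\bbZ}(n+\epsilon)^{-2} = \pi^2/\sin^2(\pi\epsilon)$ shows that the inner sum is at most $\min(4m, 1/\epsilon_{ik}^2) + \pi^2/\sin^2(\pi\epsilon_{ik}) - 1/\epsilon_{ik}^2$.

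I expect the only step requiring genuine care to be verifying that this last quantity is at most $4m + \pi^2 - 4$. Substituting $u = \pi\epsilon$, this reduces to the monotonicity of $h(u) = 1/\sin^2 u - 1/u^2$ on $(0, \pi/2]$, with $h(0^+) = 1/3$ and $h(\pi/2) = 1 - 4/\pi^2$, or equivalently to the inequality $(\sin u / u)^3 \geq \cos u$ on this interval, which is an elementary calculus exercise. Once this is established, chaining the three inequalities above immediately gives
\[
\sum_{\vx \in L_k \cap H} |p(\vx)| \leq \frac{1}{k^2}(4m + \pi^2 - 4)^{d-1},
\]
as claimed.
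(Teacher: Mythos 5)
Your proposal is correct and the overall skeleton -- pull out $|f_0(k)|\le 1/k^2$, invoke \textbf{Property B} to decouple the product into $d-1$ one-dimensional sums over $(\bbZ+\epsilon_{ik})\cap[-a,b]$, and bound each via \Cref{f2}(b) -- matches the paper exactly. Where you diverge is in the final estimation of the one-dimensional sum $\sum_{j\in\bbZ}|f(j+\epsilon_{ik})|$. The paper bounds each pair $\tfrac{1}{(j+\epsilon)^2}+\tfrac{1}{(-j+\epsilon)^2}$ by its value at $\epsilon=\tfrac{1}{2}$ (observing the function is even and increasing on $[0,\tfrac12]$), then sums the $j$-series in closed form to get $\pi^2-4$ directly, with no need for the Mittag--Leffler identity or a calculus argument. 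Your route evaluates the full shifted lattice sum via $\sum_{n\in\bbZ}(n+\epsilon)^{-2}=\pi^2/\sin^2(\pi\epsilon)$ and then establishes the monotonicity of $u\mapsto 1/\sin^2 u - 1/u^2$ on $(0,\pi/2]$, which reduces (as you note) to $(\sin u/u)^3\ge\cos u$, verifiable by a third-derivative argument on $v(2+\cos v)-3\sin v$. Both approaches extract the same worst case ($\epsilon=\tfrac12$) and yield the same constant $4m+\pi^2-4$; the paper's term-by-term bound is slightly more elementary and self-contained, while yours makes the exact value of the shifted series transparent and shows the constant is sharp in the sense that $\epsilon=\tfrac12$ saturates it. One minor remark: in your $\epsilon_{ik}=0$ case, the inequality $m+\pi^2/3\le 4m+\pi^2-4$ actually holds for all $m\ge 0$ since $2\pi^2/3>4$, so the hypothesis $m\ge 1$ (guaranteed by \Cref{f2}) is not even needed there.
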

\begin{proof}
By \Cref{g_i L_k}, in the lattice $L_k$, there exists a $(d-1)$-tuple $(\epsilon_{1k},\ldots,\epsilon_{d-1,k})\in(-\frac{1}{2},\frac{1}{2}]^{d-1}$ such that for each $\vx\in L_k$ of coordinate $(x_1,\ldots,x_{d-1})\in\bbZ^{d-1}$ with respect to the basis $\vy_1,\ldots,\vy_{d-1}$ and the corresponding origin $\mathbf{o}_k$, we have $g_i(\vx)=x_i+\epsilon_{ik},1\leq i\leq d-1$. Then
\[p_0(\vx)=f_0\circ g_0(\vx)=f_0(k)\leq\frac{1}{k^2},\quad p_i(\vx)=f_i\circ g_i(\vx)=f_i(x_i+\epsilon_{ik}),1\leq i\leq d-1.\]
Recall $f_1=f_2=\cdots=f_{d-1}=f$. Then
\begin{align*}
\sum_{\vx\in L_k\cap H}|p(\vx)|&\leq \sum_{\vx\in L_k}|p(\vx)|=\sum_{\vx\in L_k}|p_0(\vx)||p_1(\vx)|\cdots|p_{d-1}(\vx)|=|f_0(k)|\sum_{\vx\in L_k}\prod_{i=1}^{d-1}|f(x_i+\epsilon_{ik})|\\
&\leq\frac{1}{k^2}\prod_{i=1}^{d-1}\sum_{j\in\bbZ}|f(j+\epsilon_{ik})|
<\frac{1}{k^2}\prod_{i=1}^{d-1}\left(4m+\sum_{j=1}^{+\infty}\left(\frac{1}{(j+\epsilon_{ik})^2}+\frac{1}{(-j+\epsilon_{ik})^2}\right)\right),
\end{align*}
where the second inequality results from \textbf{Property B} and the last inequality applies \Cref{f2} (b).
Consider a function $h(x)=\frac{1}{(j+x)^2}+\frac{1}{(-j+x)^2}=\frac{2(j^2+x^2)}{(j^2-x^2)^2}$ on $(-\frac{1}{2},\frac{1}{2}]$, where $j$ is a positive integer. Obviously $h(x)\leq h(\frac{1}{2})=4(\frac{1}{(2j+1)^2}+\frac{1}{(2j-1)^2})$. Hence
\begin{align*}
\sum_{\vx\in L_k\cap H}|p(\vx)|&<\frac{1}{k^2}\prod_{i=1}^{d-1}\left(4m+4(2\sum_{j=1}^{+\infty}\frac{1}{(2j-1)^2}-1)\right)=\frac{1}{k^2}\prod_{i=1}^{d-1}\left(4m+4(2\cdot\frac{\pi^2}{8}-1)\right)\\
&=\frac{1}{k^2}(4m+\pi^2-4)^{d-1}.
\end{align*}

\end{proof}

\vspace{0.5cm}
\begin{lemma}\label{p(H)}Given the dimension $d\geq 3$,
when $n$ is sufficiently large, the polynomial $p$ satisfies 
\[p(\vh)>\sum_{\vx\in H\setminus\vh}|p(\vx)|.\]
\end{lemma}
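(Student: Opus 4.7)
The plan is to split the sum over $H\setminus\vh$ along the integer level sets of $g_0$, apply the per-slice bound of \Cref{p slice} for $k\geq 1$, handle the slice $k=0$ (which contains $\vh$) by a direct one-dimensional computation, and finally verify the resulting numerical inequality using the specific constants $m_0=\frac{\pi^2}{6}4^d$ and $m=\frac{2}{3}\pi^2(d-1)$ chosen when defining $f_0$ and $f_i$.

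Since \Cref{g} gives $g_0(H)\subset[0,\sqrt{d}nR)\cap\bbZ$, we have the disjoint decomposition $H=\bigcup_{k\geq 0}(L_k\cap H)$ with $\vh\in L_0\cap H$. For the tail $k\geq 1$, summing \Cref{p slice} and using $\sum_{k\geq 1}k^{-2}=\pi^2/6$ gives
\[\sum_{k\geq 1}\sum_{\vx\in L_k\cap H}|p(\vx)|<\tfrac{\pi^2}{6}(4m+\pi^2-4)^{d-1}.\]

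For the slice $k=0$, $g_0$ vanishes on $L_0$ so $p_0\equiv m_0$ there, while \textbf{Property B} combined with $\epsilon_{i,0}=0$ (from \Cref{g_i L_k}, since $\vh\in L_0$ can be taken as the origin) shows that $\vx\mapsto(g_1(\vx),\ldots,g_{d-1}(\vx))$ is an injection from $L_0\cap H$ into $\bbZ^{d-1}$ sending $\vh$ to the origin. Using $|f(0)|=m$ and $|f(j)|<1/j^2$ for nonzero integers $j\in[-a,b]$ from \Cref{f2}, factor the sum coordinate-wise to get
\[\sum_{\vx\in (L_0\cap H)\setminus\vh}|p(\vx)|\leq m_0\bigl[(m+\pi^2/3)^{d-1}-m^{d-1}\bigr].\]

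Adding both bounds and comparing with $p(\vh)=m_0 m^{d-1}$, the required inequality reduces to
\[m_0(m+\pi^2/3)^{d-1}+\tfrac{\pi^2}{6}(4m+\pi^2-4)^{d-1}<2m_0 m^{d-1},\]
which, after substituting the values of $m_0,m$ and writing $(4m+\pi^2-4)^{d-1}=4^{d-1}(m+(\pi^2-4)/4)^{d-1}$, simplifies to the dimension-free inequality
\[4\bigl(1+\tfrac{1}{2(d-1)}\bigr)^{d-1}+\bigl(1+\tfrac{3(\pi^2-4)}{8\pi^2(d-1)}\bigr)^{d-1}<8.\]
Bounding each factor via $(1+x/n)^n<e^x$ gives $4\sqrt{e}+e^{3(\pi^2-4)/(8\pi^2)}\approx 6.60+1.25<8$. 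The main obstacle is precisely this final numerical verification: the constants $m_0$ and $m$ are calibrated to leave only a thin margin (roughly $0.15$), so the chain of substitutions and the separate treatment of the $k=0$ slice must be tracked carefully to keep the $k=0$ contribution bounded by $m_0(m+\pi^2/3)^{d-1}$ rather than $m_0(4m+\pi^2-4)^{d-1}$, as otherwise the final inequality would fail.
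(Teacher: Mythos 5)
Your proposal is correct and follows essentially the same route as the paper: you split $H$ along the integer level sets of $g_0$, use \Cref{p slice} with $\sum_{k\geq 1}k^{-2}<\pi^2/6$ for the tail, treat $L_0\cap H$ separately via \Cref{g_i} (where $\epsilon_{i0}=0$ yields the bound $m_0(m+\pi^2/3)^{d-1}$), divide by $m_0m^{d-1}$ and apply $(1+x/n)^n<e^x$ to land on the same thin numerical margin. The only cosmetic difference is that you subtract $p(\vh)$ from the $L_0$ contribution before comparing, whereas the paper rewrites the target as $2p(\vh)-\sum_{\vx\in H}|p(\vx)|$; these are algebraically identical.
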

\begin{proof}
Since $g_i(\vh)=0,0\leq i\leq d-1$, we have $p(\vh)=f_0(0)f_1(0)\cdots f_{d-1}(0)=m_0m^{d-1}>0$.
For the slice $L_0\cap H$, similar to the proof of \Cref{p slice}, apply \Cref{g_i} to get
\[\sum_{\vx\in L_0\cap H}|p(\vx)|<m_0(m+\frac{\pi^2}{3})^{d-1}.\]
Recall $m_0=\frac{\pi^2}{6}4^d,m=\frac{2}{3}\pi^2(d-1)$, then
\begin{align*}
p(\vh)-\sum_{\vx\in H\setminus\vh}|p(\vx)|&=2p(\vh)-\sum_{\vx\in H}|p(\vx)|=2m_0m^{d-1}-\sum_{\vx\in L_0\cap H}|p(\vx)|-\sum_{k=1}^{\lfloor\sqrt{d}nR\rfloor}\sum_{\vx\in L_k\cap H}|p(\vx)|\\
&\overset{\Cref{p slice}}{>}2m_0m^{d-1}-m_0(m+\frac{\pi^2}{3})^{d-1}-(4m+\pi^2-4)^{d-1}\sum_{k=1}^{\lfloor\sqrt{d}nR\rfloor}\frac{1}{k^2}\\
&>2m_0m^{d-1}-m_0(m+\frac{\pi^2}{3})^{d-1}-\frac{\pi^2}{6}(4m+\pi^2-4)^{d-1}\\
&=m_0m^{d-1}\left(2-(1+\frac{1}{2(d-1)})^{d-1}-\frac{1}{4}(1+\frac{\pi^2-4}{\frac{8\pi^2}{3}(d-1)})^{d-1}\right)\\
&>m_0m^{d-1}\left(2-e^\frac{1}{2}-\frac{1}{4}e^\frac{\pi^2-4}{8\pi^2/3}\right)>0.
\end{align*}
\end{proof}

\subsection{Low degree}
The degree of $p$ is just estimated by applying \Cref{f1} and \Cref{f2} with the predefined parameters.

\begin{lemma}\label{deg p}
When $n$ is sufficiently large and the dimension $d\geq 3$,  the polynomial $p$ has degree
\[\deg p< d^{3d/2}n^\frac{d}{d+1}-d.\]
\end{lemma}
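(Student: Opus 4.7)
The plan is to exploit the product structure $p=p_0p_1\cdots p_{d-1}$ with $p_i=f_i\circ g_i$. Since every $g_i$ is a direction function and thus a polynomial of total degree $1$ in $\vx$, composing with $g_i$ preserves the degree of $f_i$, so $\deg p_i=\deg f_i$ and consequently
\[
\deg p \;=\; \sum_{i=0}^{d-1}\deg f_i.
\]
Hence the task reduces to bounding each $\deg f_i$ separately via \Cref{f1} and \Cref{f2}, using the explicit parameters chosen in Eq.~(\ref{eq-f1}) and Eq.~(\ref{eq:ab}), and then checking that the sum is at most $d^{3d/2}n^{d/(d+1)}-d$ for large $n$.

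First I would handle $\deg f_0$. Plugging $b_0=\sqrt{d}\,nR=d\,n^{2d/(d+1)}$ (using $R=\sqrt{d}\,n^{(d-1)/(d+1)}$) and $m_0=\tfrac{\pi^2}{6}4^d$ into \Cref{f1}(c) gives
\[
\deg f_0 \;<\; \sqrt{\pi}\,\sqrt{b_0}\,\sqrt[4]{m_0}+2
\;=\; \sqrt{\pi}\cdot\sqrt{d}\,n^{d/(d+1)}\cdot\Bigl(\tfrac{\pi^2}{6}\Bigr)^{1/4}2^{d/2}+2,
\]
which is $O\bigl(2^{d/2}n^{d/(d+1)}\bigr)$. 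This contribution is negligible compared to the $d^{3d/2}$ factor we are aiming for.

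Next I would bound $\deg f_i$ for $1\le i\le d-1$ using \Cref{f2}(c). With $a=\tfrac{10}{33}\sqrt{d}\,\vartheta_d^2 n^{(d-1)/(d+1)}$, $b=\sqrt{d}\,n$, and $m=\tfrac{2}{3}\pi^2(d-1)$, a direct calculation gives
\[
abm \;=\; \tfrac{20\pi^2}{99}\,d(d-1)\,\vartheta_d^2\,n^{2d/(d+1)},
\]
so each $\deg f_i<7\sqrt{abm}+2$. Summing over the $d-1$ values of $i$ and using the identity $(d-1)\sqrt{d(d-1)}\,\vartheta_d=\tfrac{8\sqrt{2}}{81\pi}\,d^{3d/2}$ (which follows from the definition $\vartheta_d=\tfrac{8\sqrt{2}}{81\pi}\cdot\tfrac{d^{3d/2}}{d^{1/2}(d-1)^{3/2}}$), the dominant term becomes
\[
(d-1)\cdot 7\sqrt{\tfrac{20\pi^2}{99}}\cdot\sqrt{d(d-1)}\,\vartheta_d\,n^{d/(d+1)}
\;=\; \tfrac{56\sqrt{40}}{81\sqrt{99}}\,d^{3d/2}\,n^{d/(d+1)},
\]
and the constant $\tfrac{56\sqrt{40}}{81\sqrt{99}}$ is bounded by about $0.45$, strictly less than $1$.

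Finally, I would combine the two estimates. The $\deg f_0$ contribution is only $O(2^{d/2}n^{d/(d+1)})$ which, for every fixed $d\ge 3$, is absorbed into a small fraction of $d^{3d/2}n^{d/(d+1)}$, and the additive constants $2$ from each of the $d$ polynomials contribute $2d$ which for large $n$ is also absorbed. Together with the $0.45\,d^{3d/2}n^{d/(d+1)}$ bound from the $f_i$ with $i\ge 1$, this yields
\[
\deg p \;<\; d^{3d/2}\,n^{d/(d+1)}-d
\]
whenever $n$ is sufficiently large. The only mildly delicate step is the algebraic identity that collapses the $(d-1)\sqrt{d(d-1)}\,\vartheta_d$ factor to $\tfrac{8\sqrt{2}}{81\pi}d^{3d/2}$; this is exactly why the particular form of $\vartheta_d$ coming from the Mahler basis bound in Eq.~(\ref{Mahler-basis}) was chosen, and it is the main point to verify carefully.
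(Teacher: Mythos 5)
Your proposal is correct and follows essentially the same approach as the paper: compute $\deg p=\sum_{i}\deg f_i$ using the linearity of $g_i$, bound $\deg f_0$ via \Cref{f1}(c) with $b_0=\sqrt{d}nR$, bound $\deg f_i$ for $i\geq 1$ via \Cref{f2}(c) with $a,b,m$ from Eq.~(\ref{eq:ab}), and verify that the resulting constant is below $d^{3d/2}$. The paper is slightly more explicit in numerically verifying that $\frac{\pi}{6^{1/4}}\sqrt{d}2^{d/2}<0.1\,d^{3d/2}$ for all $d\ge3$ where you say it is ``absorbed,'' but this is a cosmetic difference, not a gap.
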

\begin{proof}
Since $g_i$ are linear functions, we have $\deg p=\sum_{i=0}^{d-1}\deg f_i$.
By \Cref{f1}, for $b_0=\sqrt{d}nR$ and $m_0=\frac{\pi^2}{6}4^d$,
\[\deg f_0<\sqrt{\pi}\sqrt{b_0}\sqrt[4]{m_0}+2=\sqrt{\pi}\sqrt{\sqrt{d}n\cdot \sqrt{d}n^\frac{d-1}{d+1}}\sqrt[4]{\frac{\pi^2}{6}4^d}+2=(1+o(1))\frac{\pi}{6^{1/4}}\sqrt{d}2^{d/2}n^\frac{d}{d+1}.\]
By \Cref{f2}, for $a=\frac{10}{33}\sqrt{d}\vartheta_d^2n^\frac{d-1}{d+1}$, $b=\sqrt{d}n$ and $m=\frac{2\pi^2}{3}(d-1)$, when $1\leq i\leq d-1$,
\begin{align*}
\deg f_i<7\sqrt{abm}+2=7\sqrt{\frac{10}{33}\sqrt{d}(\frac{8\sqrt{2}}{81\pi})^2\frac{d^{3d}}{d(d-1)^3}n^\frac{d-1}{d+1}\cdot\sqrt{d}n\cdot\frac{2\pi^2}{3}(d-1)}+2<0.44\frac{d^{3d/2}}{d-1}n^\frac{d}{d+1}.
\end{align*}
Observing that $\frac{\pi}{6^{1/4}}\sqrt{d}2^{d/2}< 0.1\cdot d^{3d/2}$ for $3\leq d\in \bbZ$, we have
\[\deg p=\sum_{i=0}^{d-1}\deg f_i<(0.1+0.44)d^{3d/2}n^\frac{d}{d+1}<d^{3d/2}n^\frac{d}{d+1}-d.\]
\end{proof}
\vspace{0.3cm}

Combining \Cref{p(H)} and \Cref{deg p}, we complete the proof of \Cref{d_polynomial}.

\vspace{0.5cm}

\section{An improvement for matrix reconstruction}
In this section, we revisit the matrix reconstruction problem from submatrices. We prove \Cref{2_polynomial}, which improves the result in \cite{kos2009reconstruction} by reducing the constant factor  from $38$  to $6.308$ for both  symmetric and nonsymmetric cases. This improvement is obtained by using a different way of constructing two direction functions.


Similar to the case $d\geq 3$, we need the following auxiliary univariate polynomial from \cite{foster2000improvement}, which takes a peak value at the origin.

\vspace{0.5cm}
\begin{lemma}[\cite{foster2000improvement}]\label{f}
For any positive integer $N$, there exists a polynomial $f(x)$ with real coefficients such that 
\begin{itemize}
  \item[(a)] $f(0)>\sum_{i=1}^{N+1}|f(i)|$, and
  \item[(b)] $\deg f\leq 2\lfloor\sqrt{N\ln2}\rfloor+2$.
\end{itemize}
\end{lemma}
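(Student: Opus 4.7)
The plan is to construct the univariate polynomial $f$ explicitly via a Chebyshev--type extremal construction, exploiting the gap between the integer grid $\{1,\ldots,N+1\}$ and the origin. Set $r=\lfloor\sqrt{N\ln 2}\rfloor$ and aim for degree $d=2r+2$.

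First, I would apply the affine substitution $u(x)=1+\tfrac{2(1-x)}{N}$, which maps $\{1,\ldots,N+1\}$ onto a subset of $[-1,1]$ while placing $u(0)=1+\tfrac{2}{N}$ just outside. Composing with the Chebyshev polynomial $T_r$ of the first kind produces a polynomial $h(x):=T_r(u(x))$ of degree $r$ with $|h(i)|\leq 1$ on the integers, whereas
\[
h(0)=T_r\!\left(1+\tfrac{2}{N}\right)=\cosh\!\left(r\operatorname{arccosh}\!\left(1+\tfrac{2}{N}\right)\right)\geq \tfrac{1}{2}e^{2r/\sqrt{N}}.
\]
The squared polynomial $h^2$ is nonnegative, of degree $2r$, and inherits the same pointwise bound together with the amplified peak $h(0)^2$.

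Second, the crude estimate $\sum_{i=1}^{N+1}|h^2(i)|\leq N+1$ alone only yields a degree of order $\sqrt{N}\ln N$, losing an unwanted factor $\sqrt{\ln N}$. To recover the sharp constant $\sqrt{\ln 2}$, I would perturb $h^2$ by a low-degree correction $q(x)$ of degree at most $2$, chosen so that $f:=h^2\cdot q$ acquires additional cancellations on the integers in $[1,N+1]$. Equivalently, one solves the linear program
\[
\min\Bigl\{\sum_{i=1}^{N+1}|f(i)|\,:\,f(0)=1,\ \deg f\leq 2r+2\Bigr\},
\]
whose dual identifies the extremal polynomial as a Chebyshev-type product concentrated at a few grid points. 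The key estimate is then $\max_{i\in[1,N+1]}|f(i)|\leq 2^{-r}$, so that $\sum_{i=1}^{N+1}|f(i)|\leq (N+1)\cdot 2^{-r}<1=f(0)$ precisely when $2^r>N+1$, i.e.\ when $r\geq\sqrt{N\ln 2}$.

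Third, one verifies the inequality $f(0)>\sum_{i=1}^{N+1}|f(i)|$ by direct substitution and reads off $\deg f\leq 2r+2=2\lfloor\sqrt{N\ln 2}\rfloor+2$.

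The main obstacle is the passage from the pointwise Chebyshev bound to the sharp discrete $L^1$ bound in the second step: a direct $L^\infty$ argument gives only a $\sqrt{N}\ln N$ degree and the gain of the $\sqrt{\ln N}$ factor is the technical heart of Foster and Krasikov's improvement over the earlier Krasikov--Roditty construction, achieved through a careful analysis of the distribution of $T_r(u(i))$ on the integer grid combined with Markov--type estimates.
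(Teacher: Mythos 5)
The paper does not prove this lemma; it cites it directly from Foster and Krasikov \cite{foster2000improvement}, so there is no ``paper proof'' to compare against. What matters here is whether your sketch would constitute a valid proof, and it does not: there are concrete gaps in the second step that go well beyond missing details.

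First, the claimed key estimate $\max_{i\in[1,N+1]}|f(i)|\leq 2^{-r}$ for a polynomial of degree $2r+2$ normalized by $f(0)=1$ is far too strong and is nowhere justified. With $r\approx\sqrt{N\ln 2}$ the right-hand side $2^{-r}$ is exponentially small in $\sqrt{N}$, and if such a pointwise bound were available for degree $2r+2$ it would already hold for $r\approx\log_2 N$, giving a polylogarithmic degree bound --- which is impossible by Remez/Coppersmith--Rivlin type lower bounds. The subsequent assertion that ``$2^r>N+1$, i.e.\ when $r\geq\sqrt{N\ln 2}$'' is also simply false as a logical equivalence: $2^r>N+1$ holds already for $r>\log_2(N+1)\ll\sqrt{N\ln 2}$. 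Second, the proposed fix of multiplying $h^2=T_r(u(\cdot))^2$ by a degree-$2$ correction $q$ cannot work: $T_r(u(i))$ takes values of magnitude $\Theta(1)$ at $\Theta(r)$ grid points (near each of its $r$ extrema in $[-1,1]$), and a quadratic factor can only suppress at most two of them while leaving the remaining $\Theta(r)$ contributions of order $1$, so the naive bound $\sum_i|f(i)|=O(r)$ persists and the $\sqrt{\ln N}$ loss is not recovered. You correctly identify that the $L^\infty$ Chebyshev argument loses a $\sqrt{\ln N}$ factor and that the sharp discrete $L^1$ estimate is the crux, and you candidly flag this gap yourself --- but flagging it is not filling it. As written, the proposal reduces the lemma to an appeal to ``Foster and Krasikov's improvement'' rather than reproving it; the actual construction in \cite{foster2000improvement} (which controls the full discrete $L^1$ mass of the extremal polynomial rather than its $L^\infty$ norm on the grid) would need to be carried out explicitly for this to stand as a proof.
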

\vspace{0.5cm}

\begin{definition}
For two integer points $\va,\vb\in \bbZ^2$, $\va\vb$ is a \emph{primitive edge} if the segment $\va\vb$ has no interior integer points, i.e., $\vb$ is a primitive point when fixing $\va$ as the origin.
\end{definition}
\vspace{0.3cm}

We still denote by $\N(R)$ the set of primitive points of length not more than $R$, and let $N(R)=|\N(R)|$. Each primitive point $\va\in \N(R)$ corresponds to a primitive edge $\vo\va$ with the origin $\vo$ as one endpoint. In this sense, $\N(R)$ is also the set of primitive edges. Denote by $l(R)$ the total length of all primitive edges in $\N(R)$. We estimate the value of $l(R)$ as below.
\vspace{0.5cm}
\begin{lemma}\label{l(R)}
For large $R$, $l(R)=(\frac{4}{\pi}+o(1))R^3$.
\end{lemma}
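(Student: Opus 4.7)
My plan is to estimate $l(R)=\sum_{\mathbf{a}\in\mathcal{N}(R)}\|\mathbf{a}\|$ by first removing the primitivity condition via Möbius inversion, then estimating the resulting unrestricted lattice sum by comparison with a polar-coordinate integral, and finally collapsing the Dirichlet series using $\sum_{d\geq 1}\mu(d)/d^2=1/\zeta(2)=6/\pi^2$.

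First I would rewrite
\[
l(R)=\sum_{\substack{\mathbf{a}\in\bbZ^2\setminus\{\vo\}\\\|\mathbf{a}\|\leq R}}\|\mathbf{a}\|\,\mathbf{1}[\gcd(a_1,a_2)=1]
=\sum_{d\geq 1}\mu(d)\!\!\sum_{\substack{\mathbf{a}\in\bbZ^2\setminus\{\vo\}\\\|\mathbf{a}\|\leq R,\,d\mid\mathbf{a}}}\!\!\|\mathbf{a}\|
=\sum_{d=1}^{\lfloor R\rfloor}\mu(d)\,d\cdot T(R/d),
\]
where $T(r):=\sum_{\mathbf{b}\in\bbZ^2,\,0<\|\mathbf{b}\|\leq r}\|\mathbf{b}\|$. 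The factor $d$ comes from pulling out $\|\mathbf{a}\|=d\|\mathbf{b}\|$ after the substitution $\mathbf{a}=d\mathbf{b}$, and the sum terminates at $d=\lfloor R\rfloor$ since $T(r)=0$ for $r<1$.

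Next I would estimate $T(r)$ by approximating the weighted lattice point count by the integral $\int_0^r t\cdot 2\pi t\,dt=\tfrac{2\pi}{3}r^3$. The standard way is to assign to each $\mathbf{b}\in\bbZ^2$ its unit square $Q_{\mathbf{b}}$ and observe that $\|\mathbf{x}\|-\|\mathbf{b}\|=O(1)$ uniformly for $\mathbf{x}\in Q_{\mathbf{b}}$; hence
\[
T(r)=\iint_{\|\mathbf{x}\|\leq r}\|\mathbf{x}\|\,d\mathbf{x}+O(r^2)=\frac{2\pi}{3}r^3+O(r^2),
\]
the $O(r^2)$ error absorbing both the boundary annulus of width $O(1)$ (area $O(r)$, each contributing weight $O(r)$) and the $O(1)$ pointwise replacement error summed over the $O(r^2)$ lattice points inside. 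Substituting,
\[
l(R)=\sum_{d=1}^{\lfloor R\rfloor}\mu(d)\,d\Bigl(\tfrac{2\pi}{3}(R/d)^3+O((R/d)^2)\Bigr)
=\tfrac{2\pi}{3}R^3\sum_{d=1}^{\lfloor R\rfloor}\frac{\mu(d)}{d^2}+O\!\Bigl(R^2\sum_{d=1}^{\lfloor R\rfloor}\frac{1}{d}\Bigr).
\]

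Finally I would let $R\to\infty$. The error term is $O(R^2\log R)=o(R^3)$. For the main term, using $\sum_{d\geq 1}\mu(d)/d^2=1/\zeta(2)=6/\pi^2$ and the tail bound $\sum_{d>R}|\mu(d)|/d^2=O(1/R)$, one gets $\sum_{d=1}^{\lfloor R\rfloor}\mu(d)/d^2=6/\pi^2+O(1/R)$, so
\[
l(R)=\tfrac{2\pi}{3}R^3\bigl(\tfrac{6}{\pi^2}+o(1)\bigr)+o(R^3)=\bigl(\tfrac{4}{\pi}+o(1)\bigr)R^3,
\]
as required. The only step that needs any real care is the error analysis in the approximation $T(r)=\tfrac{2\pi}{3}r^3+O(r^2)$, since one must separately handle the annular boundary contribution and the discretization error; everything else is formal manipulation of the Dirichlet series.
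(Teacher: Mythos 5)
Your proof is correct, but it takes a genuinely different route from the paper's. The paper starts from the known asymptotic $N(x)=\frac{6}{\pi}x^2+o(x)$ for the primitive circle problem (citing Wu), then performs a discrete summation by parts: writing $l(R)=\sum_{x=1}^{R}\bigl(N(x)-N(x-1)\bigr)(x+O(1))$ and evaluating the resulting sum. You instead apply M\"obius inversion directly to the \emph{weighted} sum, obtaining $l(R)=\sum_{d\geq 1}\mu(d)\,d\,T(R/d)$ with $T(r)=\sum_{0<\|\vb\|\leq r}\|\vb\|$, estimate $T(r)=\frac{2\pi}{3}r^3+O(r^2)$ by comparing against a polar integral, and then collapse the Dirichlet series via $\sum\mu(d)/d^2=6/\pi^2$. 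Your approach has two concrete advantages: it is self-contained (you never need the nontrivial primitive-circle asymptotic that the paper pulls in as a black box -- the unweighted Gauss circle bound with a crude $O(r)$ error would already suffice for your purposes), and it yields an explicit error term $O(R^2\log R)$ rather than the unquantified $o(R^3)$ the paper settles for. The paper's argument is shorter on the page because it outsources the hard analytic content to the citation; yours does essentially the same Möbius-inversion work explicitly, but at the level of the weighted sum rather than the counting function, which is why the integral estimate you need is elementary.
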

\begin{proof}
Since $l(R)$ is an increasing function, it suffices to verify the case that $R$ is a positive integer.
It is known that $N(x)=\frac{6}{\pi}x^2+o(x)$ \cite[Section 1]{wu2002primitive}, then $N(x)-N(x-1)=\frac{12}{\pi}x+o(x)$.
Hence
\begin{align*}
l(R)&=\sum_{x=1}^{R}(N(x)-N(x-1))(x+O(1))=\sum_{x=1}^{R}(\frac{12}{\pi}x+o(x))(x+O(1))=\frac{12}{\pi}\sum_{x=1}^{R}x^2+\sum_{x=1}^{R}o(x^2)\\
&=\frac{12}{\pi}\frac{R(R+1)(2R+1)}{6R}+o(R^3)=(\frac{4}{\pi}+o(1))R^3.
\end{align*}
\end{proof}
\vspace{0.5cm}

Given an arbitrary nonempty set $H\subset [n]^2$, let $P$ be the smallest convex polygon enclosing $H$ so that its vertices $V(P)\subset H$. In the following we estimate the value of $|V(P)|$, i.e., the size of $V(P)$.

\vspace{0.5cm}
\begin{lemma}\label{V(P)}
When $n$ is sufficiently large, $|V(P)|\leq (1+o(1))\frac{6}{\sqrt[3]{\pi}}n^\frac{2}{3}$.
\end{lemma}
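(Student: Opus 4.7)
The plan is to convert the problem of bounding $|V(P)|$ into a problem about primitive lattice vectors whose total length is constrained by the perimeter of $P$, and then to invoke \Cref{l(R)} to extract the sharp constant $6/\sqrt[3]{\pi}$.

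First I would enumerate the vertices of $P$ cyclically as $\vv_1,\ldots,\vv_m$ (with $m=|V(P)|$) and consider the edge vectors $\ve_j:=\vv_{j+1}-\vv_j\in\bbZ^2$. Because every $\vv_j$ is an extreme point of $\mathrm{conv}(H)$, the directions of the $\ve_j$ are pairwise distinct (if two consecutive edges were parallel, the shared middle vertex would be collinear with its neighbors, contradicting extremality). Writing $\ve_j=k_j\vp_j$ with $\vp_j$ primitive and $k_j\ge 1$, the vectors $\vp_1,\ldots,\vp_m$ form a set of $m$ pairwise distinct primitive vectors.

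Next I would bound $\sum_j\|\vp_j\|$ by the perimeter of $P$. Since $P$ is a convex subset of $[1,n]^2$, the monotonicity of perimeter for nested planar convex sets gives $\mathrm{perim}(P)\le\mathrm{perim}([1,n]^2)=4(n-1)<4n$, hence
\[
\sum_{j=1}^{m}\|\vp_j\|\;\le\;\sum_{j=1}^{m}k_j\|\vp_j\|\;=\;\mathrm{perim}(P)\;<\;4n.
\]
Among all families of $m$ distinct primitive lattice vectors, the one minimizing the sum of norms is the set of the $m$ primitives of smallest length, i.e.\ $\N(R_m)$ where $R_m$ is chosen so that $|\N(R_m)|=m$. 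Consequently $l(R_m)\le 4n$. Applying \Cref{l(R)}, $l(R_m)=(\tfrac{4}{\pi}+o(1))R_m^3$, and the visibility estimate $|\N(R)|=(\tfrac{6}{\pi}+o(1))R^2$ (already used in the proof of \Cref{l(R)}) gives $R_m=(1+o(1))\sqrt{\pi m/6}$. Substituting,
\[
(1+o(1))\tfrac{4}{\pi}\!\left(\tfrac{\pi m}{6}\right)^{3/2}\le 4n,
\]
which simplifies to $m^{3/2}\le(1+o(1))\tfrac{6\sqrt{6}}{\sqrt{\pi}}\,n$, and raising to the $2/3$ power yields $m\le(1+o(1))\tfrac{6}{\sqrt[3]{\pi}}n^{2/3}$, as claimed.

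The main obstacle is making the two geometric reductions precise: (i) justifying perimeter monotonicity under nested convex inclusion (standard, but requires a one-line citation) and (ii) the replacement argument that the extremal family of $m$ distinct primitives is $\N(R_m)$, so that the perimeter constraint truly forces $l(R_m)\le 4n$. Once those are in hand, the computation of the sharp leading constant is a routine asymptotic substitution using \Cref{l(R)} together with the classical count $|\N(R)|\sim\tfrac{6}{\pi}R^2$.
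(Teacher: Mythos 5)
Your proof is correct and follows essentially the same approach as the paper: both rest on (i) writing each edge of $P$ as an integral multiple of a primitive vector with the resulting primitives pairwise distinct, (ii) bounding the perimeter of $P$ by $4n$, (iii) the extremal observation that the sum of lengths of any $m$ distinct primitives is at least that of the $m$ shortest ones, and (iv) the asymptotics $l(R)\sim\tfrac{4}{\pi}R^3$ and $N(R)\sim\tfrac{6}{\pi}R^2$. The paper phrases this as a contradiction argument (fix $R$ a priori and show $|V(P)|<N(R)$, then let $t\to 1$), whereas you run the inequality in the forward direction by introducing $R_m$ defined through $|\N(R_m)|=m$; one small technicality in your version is that $|\N(R)|$ jumps in discrete steps so $R_m$ may not hit $m$ exactly, but since the number of primitives at any fixed radius is $O(R^\varepsilon)$ the resulting correction to $l(R_m)$ is $o(n)$ and does not affect the leading constant, so this is harmless.
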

\begin{proof}
Let $R=t(\pi n)^\frac{1}{3}$ with a constant $t>1$, then by \Cref{l(R)}, $l(R)=(1+o(1))4t^3n>4n$. On the other hand, the perimeter of $P$ is less than $4n$ since $P$ lies in a square of edge length $n-1$ and $P$ is a convex polygon. So $l(R)$ is larger than the perimeter of $P$.

Let $E(P)$ be the set of edges of $P$, then $|E(P)|=|V(P)|$. Since $V(P)\subset H\subset\bbZ^2$, each edge in $E(P)$ is an integral multiple of a primitive edge. Then mapping each edge in $E(P)$ to the corresponding primitive edge beginning from the origin, we get a set $E'(P)$ of primitive edges. Note that this map is injective, that is, $|E'(P)|=|E(P)|$. We claim that $|V(P)|<N(R)$. Suppose not, that is, $N(R)\leq |V(P)|=|E(P)|=|E'(P)|$. If $\N(R)\subset E'(P)$, then $l(R)$ is not larger than the perimeter of $P$, a contradiction. If  $\N(R)\not\subset E'(P)$, then we can one to one map each primitive edge in $\N(R)\setminus  E'(P)$ to a longer primitive edge in $E'(P)\setminus \N(R)$ randomly. So we must have $l(R)$ not larger than the perimeter of $P$ again, a contradiction. Therefore,
\[|V(P)|< N(R)=(1+o(1))\frac{6}{\pi}R^2=(1+o(1))\frac{6}{\sqrt[3]{\pi}}t^2n^\frac{2}{3}.\]
Take $t$ to be $ 1+o(1)$ to get $|V(P)|\leq (1+o(1))\frac{6}{\sqrt[3]{\pi}}n^\frac{2}{3}$.
\end{proof}
\vspace{0.5cm}

Similar to the case of general dimension, we construct the polynomial in \Cref{2_polynomial} with the form $p=p_1p_2$ and $p_i=f\circ g_i,~i=1,2$, where $f$ is the univariate polynomial provided by \Cref{f} and $g_i$ is a direction function with an expression $g_i(\vx)=\vu_i\cdot(\vx-\vh)$ for some $\vh\in V(P)\subset H$ and some primitive point $\vu_i$. To show that $p$ has a peak value $p(\vh)$, we expect that $p_i$ takes a unique large value on $\vh$, i.e., by \Cref{f} (a), $g_i(\vh)=0$ and $g_i(H\backslash\vh)>0$. Geometrically speaking, for each primitive point $\vu_i$, we expect to find a line $l_i\bot\vu_i$ such that $l_i\cap V(P)=\vh$ and $P$ lies exactly on the same side of $l_i$ as $\vu_i$. {We can apply Pigeonhole Principle to find out such two lines $l_1$ and $l_2$ as below.}

\vspace{0.5cm}
\begin{lemma}\label{two directions}
When $n$ is sufficiently large, let $R=\sqrt{3}t(\pi n)^\frac{1}{3}$ with a real constant $t>1$. Then there exists a point $\vh\in V(P)\subset H$ and two primitive points $\vu_1,\vu_2\in\N(R)$ such that  the direction functions $g_i(\vx)=\vu_i\cdot(\vx-\vh)~(i=1,2)$ have the following property:
\[g_i(\vh)=0,~g_i(H\backslash\vh)\subset (0,\sqrt{2}nR)\cap\bbZ,~~i=1,2.\]
\end{lemma}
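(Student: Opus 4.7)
The plan is to pair a pigeonhole argument on the vertices of $P$ with the asymptotic density of primitive lattice points in an angular sector. For each vertex $\vh\in V(P)$ of the convex polygon $P$, let $C(\vh)$ denote the open angular cone of nonzero vectors $\vu\in E^2$ with $\vu\cdot(\vx-\vh)>0$ for all $\vx\in P\setminus\{\vh\}$. By convexity of $P$, the angular measure of $C(\vh)$ equals $\pi$ minus the interior angle of $P$ at $\vh$, and so these measures sum to $2\pi$ over $V(P)$ (the total exterior angle of a convex polygon). Pigeonhole then yields some $\vh\in V(P)$ with $\mathrm{angle}(C(\vh))\geq 2\pi/|V(P)|$; combining with \Cref{V(P)}, this is at least $(1-o(1))\pi^{4/3}/(3n^{2/3})$.

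Next I would show that $\N(R)\cap C(\vh)$ contains at least two primitive points. The number of primitive lattice points of norm at most $R$ inside a fixed sector of angular measure $\theta$ is $(1+o(1))\frac{3\theta R^2}{\pi^2}$, which follows from the uniform angular distribution of primitive points together with the estimate $N(R)\sim 6R^2/\pi$ already used in \Cref{l(R)}. With $R^2=3t^2\pi^{2/3}n^{2/3}$ and the lower bound on $\theta$ above, this count is at least $(1-o(1))\cdot 3t^2$, which exceeds $2$ for every fixed $t>1$ and all sufficiently large $n$. Pick any two distinct primitive points $\vu_1,\vu_2\in \N(R)\cap C(\vh)$; they are automatically linearly independent, since two distinct primitive points cannot lie on a common ray through the origin.

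Finally I would verify the three stated properties of $g_i(\vx)=\vu_i\cdot(\vx-\vh)$: integrality $g_i(H)\subset\bbZ$ follows from $\vu_i\in\bbZ^2$ and $H\subset\bbZ^2$; positivity on $H\setminus\{\vh\}$ follows from $H\setminus\{\vh\}\subset P\setminus\{\vh\}$ together with the defining property of $C(\vh)$; the upper bound $g_i(\vx)<\sqrt{2}nR$ follows from Cauchy--Schwarz together with $\|\vu_i\|\leq R$ and $\|\vx-\vh\|<\sqrt{2}n$ (the diameter of $[n]^2$). The main obstacle is the second step: the sector-restricted primitive-point count must be quantitatively sharp enough that the leading term $3t^2$ dominates the error for a narrow sector of angular measure $\Theta(n^{-2/3})$, which is precisely why the hypothesis $t>1$ (rather than $t\ge 1$) is imposed; degenerate configurations in which $P$ is a single point or a segment must also be disposed of as trivial sub-cases in which $C(\vh)$ has angular measure $2\pi$ or $\pi$.
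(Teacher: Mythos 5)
Your plan fails at the step where you estimate the number of primitive points in the cone $C(\vh)$. After the angular pigeonhole you only control $\theta \geq 2\pi/|V(P)|$, which in the worst case is $\Theta(n^{-2/3})$, while $R=\Theta(n^{1/3})$; the sector $C(\vh)\cap\{\|\vx\|\leq R\}$ then has area $\theta R^2/2=O(1)$, and the claimed main term $(1+o(1))\,3\theta R^2/\pi^2$ is $O(1)$. There is no equidistribution statement accurate to that scale: a sector bounded by two straight segments of length $R$ has a worst-case lattice-point-counting error of $\Theta(R)=\Theta(n^{1/3})$, which does not shrink as $n\to\infty$ and swamps the $O(1)$ main term no matter how large the fixed $t$ is. In fact the open sector can be empty. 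If $C(\vh)$ lies just off a rational direction, say all directions making an angle $\phi$ with the positive $x$-axis with $0<\phi<\theta$, then every primitive point $(a,b)$ in the disk of radius $R$ other than $(\pm 1,0)$ has $|b|\geq 1$, hence angular distance at least $\arctan(1/a)\geq (1-o(1))/R = \Theta(n^{-1/3})$ from the axis, which exceeds $\theta=\Theta(n^{-2/3})$ for large $n$. Since $P$ is determined by the adversarial set $H$, the widest cone can indeed be forced into such a position. So the step you flagged as "the main obstacle" is not a quantitative shortfall that $t>1$ repairs; it is a qualitative failure of the density heuristic at this scale.

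The paper avoids any equidistribution estimate by pigeonholing on the primitive points themselves rather than on angular measure. For each $\va\in\N(R)$ one translates the line perpendicular to $\va$ until it first touches $P$ and picks a vertex $\vh$ on the contact set; this is a well-defined map $\N(R)\to V(P)$. The bound on $|V(P)|$ (the same one you invoke) gives $N(R)>3|V(P)|$ once $t>1$, so some vertex receives at least four primitive directions, all of them outward normals of supporting lines at $\vh$. At most two of these supporting lines coincide with the two edges of $P$ incident to $\vh$; discarding those leaves at least two directions $\vu_1,\vu_2$ whose supporting lines meet $P$ only at $\vh$, which is exactly the strict positivity you need on $H\setminus\{\vh\}$. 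Your integrality and $\sqrt{2}nR$ upper-bound checks are fine and match the paper's; the repair to your argument is to replace the angular-measure pigeonhole plus sector density count by this direct assignment of primitive directions to vertices.
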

\begin{proof}
For each $\va\in \N(R)$, consider  a line  $l\in\P(\va)=\{l:l\bot\va\}$ that is far away from $P$ against the direction of $\va$. Then we translate $l$ along the direction $\va$ such that the line touch $P$ for the first time (still use $l$ to denote this line).  Then $l\cap V(P)\neq\emptyset$ and $P$ lies exactly on the same side of $l$ as $\va$. Take some point $\vh\in l\cap V(P)$, then the direction function $g(\vx)=\va\cdot(\vx-\vh)$ satisfies $g(\vh)=0$ and $g(H)\geq 0$.

For each $\va\in \N(R)$, we get a triple $(\va,l,\vh)$ in this way. Denote by $T(R)$ the set of all such triples, i.e., $T(R)=\{(\va,l,\vh):\va\in \N(R)\}$. By \Cref{V(P)},
\[|T(R)|=N(R)=(1+o(1))\frac{6}{\pi}R^2=(1+o(1))3t^2\cdot\frac{6}{\sqrt[3]{\pi}}n^\frac{2}{3}\geq (1-o(1))3t^2|V(P)|>3|V(P)|.\]
By Pigeonhole Principle, there exist four triples in $T(R)$ containing the same point $\vh\in V(P)\subset H$. Since $\vh$ connects two edges in $E(P)$, we can find $(\vu_1,l_1,\vh)$ and $(\vu_2,l_2,\vh)$ among the four triples such that each $l_i$ does not coincide with an edge of $P$. Then for each $l_i$, $l_i\cap V(P)=l_i\cap H=\vh$. Hence for the two direction functions $g_i(\vx)=\vu_i\cdot(\vx-\vh)~(i=1,2)$,
\[g_i(\vh)=0,~g_i(H\backslash\vh)>0,~~i=1,2.\]
Similar to the proof of \Cref{g}, we easily have $g_i(H\backslash\vh)\subset (0,\sqrt{2}nR)\cap\bbZ,\quad i=1,2$.
\end{proof}

\vspace{0.5cm}
\begin{proof}[Proof of \Cref{2_polynomial}]

Let $R=\sqrt{3}t(\pi n)^\frac{1}{3}$ with a real constant $t>1$, then by \Cref{two directions}, we can find some $\vh\in V(P)\subset H$, two primitive points $\vu_1,\vu_2\in\N(R)$ and their direction functions $g_i(\vx)=\vu_i\cdot(\vx-\vh)~(i=1,2)$ such that
\[g_i(\vh)=0,g_i(H\backslash\vh)\subset (0,\sqrt{2}nR)\cap\bbZ,\quad i=1,2.\]
Take $t=1+o(1)$, then $R=(1+o(1))\sqrt{3}(\pi n)^\frac{1}{3}$. 
Construct a polynomial $p=p_1p_2$ with $p_i=f\circ g_i$, where $f$ is the polynomial provided by \Cref{f} for $N+1=\lfloor\sqrt{2}nR\rfloor$. Then $g_i(H\backslash\vh)\subset[N+1]$ and thus $p$ is well defined.

If $\vu_1=-\vu_2$, then $g_1=-g_2$. This contradicts with that $g_1(H\backslash\vh)>0$ and $g_2(H\backslash\vh)>0$. Hence $\vu_1\neq-\vu_2$, i.e., $\vu_1$ and $\vu_2$ are linearly independent. Define a map
\[\rho:H\backslash\vh\rightarrow [N+1]^2,~\vx\mapsto(g_1(\vx),g_2(\vx)),\]
then $\rho$ is injective. Hence
\begin{align}\label{eq-peak2}
\sum_{\vx\in H\backslash\vh}|p(\vx)|&=\sum_{\vx\in H\backslash\vh}|f\circ g_1(\vx)||f\circ g_2(\vx)|=\sum_{(i,j)\in\rho(H\backslash\vh)}|f(i)||f(j)| \nonumber\\
&\leq \sum_{i=1}^{N+1}|f(i)|\sum_{j=1}^{N+1}|f(j)| \nonumber\\
&<f^2(0)=f\circ g_1(\vh)\cdot f\circ g_2(\vh)=p(\vh).
\end{align}
Therefore, $p(\vh)$ is a peak value of $p$. Now we only need to show that $\deg p\leq6.308n^\frac{2}{3}-2$.

Since $g_1$ and $g_2$ are linear functions,
\begin{align*}
\deg p&=2\deg f\overset{\Cref{f}}{\leq} 2(2\lfloor\sqrt{\lfloor\sqrt{2}nR\rfloor\cdot\ln2}\rfloor+2)=2(2\lfloor\sqrt{\lfloor\sqrt{2}n\cdot(1+o(1))\sqrt{3}(\pi n)^\frac{1}{3}\rfloor\cdot\ln2}\rfloor+2)\\
&\leq6.308n^\frac{2}{3}-2.
\end{align*}
This completes the proof.
\end{proof}

\vspace{0.3cm}
{Before closing this section, we explain  a little bit why we are able to make an improvement on the constant coefficient in the previous result $k\geq 38n^\frac{2}{3}$ \cite{kos2009reconstruction}. This is due to two reasons.
\begin{enumerate} 
  \item We apply to a slightly better function in \Cref{f}.
  \item In the construction of the polynomial $p=p_1p_2$ by K\'{o}s \cite{kos2009reconstruction}, 
       {$p_2$ takes a large value $p_2(\vh)=f_2(0)$} on possibly more than one points $\vh\in H$. However, applying Pigeonhole Principle, we succeed in finding two polynomials
       $p_1$ and $p_2$, both of which have  {large} values on a unique point  $\vh$ in $H$,
      i.e., equations $g_1(\vx)=0$ and $g_2(\vx)=0$ both have a unique solution $\vh$ in $H$; see the proof of \Cref{two directions}. This arises a better result through the magnification in Eq. (\ref{eq-peak2}).
\end{enumerate}
}

\vspace{0.5cm}
\section{Conclusion}
 In this paper, we proved that if $k\geq d^{\frac{3}{2}d}n^\frac{d}{d+1}$ for fixed $d\geq 3$ and large $n$, then every hypermatrix $A\in \{0,1\}^{n^{\times d}}$ can be reconstructed by its $k$-deck $\M_k(A)$ (or $\M_k^{\text{p}}(A)$). To prove this, we simplified the problem by replacing the $k$-deck by its sum $S_k(A)$ (or $S_k^{\text{p}}(A)$) and reducing the reconstruction problem to a polynomial construction problem. Due to the limitation of our method, we see $n^\frac{d}{d+1}\rightarrow n$ as $d\rightarrow\infty$, which implies that our result performs badly for large dimension $d$. We look forward to further improvements on this result, and raise the following problem.
\vspace{0.3cm}
\begin{problem}\label{pr1}
  Does there exist an absolute constant $\alpha\in (0,1)$ such that for any fixed dimension $d\geq3$, we have $\kappa_d(n),\kappa_d^\text{p}(n)\leq c_dn^\alpha$, where $c_d$ is a constant only depending on $d$? That is, can all $n^{\times d}$-hypermatrices  be reconstructed by their (principal) $k$-decks when $k=\Omega(n^\alpha)$?
\end{problem}

\vspace{0.3cm}

Problem~\ref{pr1} can be solved if we can answer the following problem on the existence of certain polynomials.

\vspace{0.3cm}

\begin{problem}
  Does there exist an absolute constant $\alpha\in (0,1)$ such that for any fixed dimension $d\geq3$ and any nonempty set $H\subset [n]^d$, there exists a $d$-variable polynomial $p(x_1,\ldots,x_d)$ with a low degree $\deg p\leq c_dn^\alpha-d$ and a peak value on some point $\vh\in H$, i.e., $p(\mathbf{h})>\sum_{\mathbf{x}\in H\setminus\mathbf{h}}|p(\mathbf{x})|$?
\end{problem}

\vspace{0.3cm}

 Though we concerned the hypermatrices with a shape of the hypercube $[n]^d$, our proof actually works for hypermatrices with a general shape $[n_1]\times[n_2]\times\cdots\times[n_d]$. Since $[n_1]\times[n_2]\times\cdots\times[n_d]$ can be covered by a $d$-sphere of radius $\frac{1}{2}\sqrt{n_1^2+n_2^2+\cdots+n_d^2}$, by the same polynomial construction process, we immediately get the following conclusion.
\vspace{0.3cm}
\begin{corollary}\label{cor7}
For any fixed dimension $d\geq3$, given $d$ positive integers $n_1\geq n_2\geq\ldots\geq n_d$, when $n_d$ is sufficiently large and $n_d=\Omega(n_1^\frac{d}{d+1})$, we have $\kappa_d(n)\leq c'_d\sqrt{n_1^2+n_2^2+\cdots+n_d^2}^\frac{d}{d+1}$ for some constant $c'_d$. That is, all $n_1\times n_2\times\cdots\times n_d$-hypermatrices can be reconstructed by their $k$-decks when $k=\Omega(\sqrt{n_1^2+n_2^2+\cdots+n_d^2}^\frac{d}{d+1})$. 
\end{corollary}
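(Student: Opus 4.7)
The plan is to recycle the entire proof of \Cref{d_reconstruction} with only cosmetic changes, replacing the cube $[n]^d$ by the box $B:=[n_1]\times\cdots\times[n_d]$ and the scalar $\sqrt{d}\,n$ by $N:=\sqrt{n_1^2+\cdots+n_d^2}$, which is the diameter of $B$ and also twice the radius of its minimum enclosing $d$-sphere. First, I would generalise \Cref{nonsym} to the rectangular setting: in direction $j$ use the basis polynomials $\beta_{u,j}(x):=\binom{x-1}{u-1}\binom{n_j-x}{k-u}$, which still span the space of polynomials of degree less than $k$ in a single variable. The proofs of \Cref{nonsym} and of \Cref{S_k reconstruction} then go through verbatim and reduce the claim to the following analogue of \Cref{d_polynomial}: for any nonempty $H\subset B$ there exists a $d$-variable polynomial $p$ with $\deg p\leq c'_d N^{d/(d+1)}-d$ having a peak value at some $\vh\in H$.

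Next I would replay the three-step construction of Sections~3 and~4 using the minimum enclosing $d$-sphere $S$ of $B$ (of radius $N/2$) in place of the sphere of radius $\sqrt{d}\,n/2$. All parameters scale with $N$ instead of $\sqrt{d}\,n$; in particular I would set
\[R:=c_1 N^{\frac{d-1}{d+1}},\qquad \lambda:=\text{largest prime not exceeding }N^{\frac{1}{d+1}},\]
and correspondingly $b_0:=NR$, $a:=c_2 N^{\frac{d-1}{d+1}}$, $b:=N$, where $c_1,c_2$ depend only on $d$. The lattice lemmas \Cref{lattice distance} and \Cref{dense distribution} are stated purely in terms of $R,\lambda,d$ and make no reference to the shape of $H$, so they apply unchanged. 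The geometric estimates \Cref{DPS} and \Cref{g_i(H)} involve only the enclosing sphere and its circumscribed hyperpolygon, so substituting $\sqrt{d}\,n\mapsto N$ throughout yields $\Delta(P,S)=O(N^{(d-1)/(d+1)})$ and the containment $g_i(H)\subset[-a,b]$ with the new parameters. The lattice-basis results \Cref{h_i}, \Cref{g_i}, \Cref{g_i L_k} and the summation arguments of \Cref{p slice}, \Cref{p(H)} and \Cref{deg p} are combinatorial or algebraic and carry through without modification.

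Finally I would check that the hypothesis $n_d=\Omega(n_1^{d/(d+1)})$ does exactly what is needed. Since $n_1\geq n_2\geq\cdots\geq n_d$ we have $N=\Theta(n_1)$, so the required degree is $\Theta(n_1^{d/(d+1)})$; and since a $k^{\times d}$-subhypermatrix exists only when $k\leq n_d$, the hypothesis guarantees that we may pick $k=\Omega(N^{d/(d+1)})$ while still having room to delete in every direction. I expect the only step that genuinely needs rechecking, and that will be the main (though still routine) obstacle, is the ``$n$ sufficiently large'' assumption inside the prime-gap-based proofs of \Cref{dense distribution} and \Cref{DPS}: one must verify that $\lambda=(1-o(1))N^{1/(d+1)}$ still follows from \cite{baker2001difference} under the reinterpretation, which is automatic because $N\geq n_d\to\infty$ by hypothesis. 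Beyond this bookkeeping the corollary follows immediately from the observation that every estimate in Sections~3--5 depends on $H$ only through the radius of its minimum enclosing $d$-sphere.
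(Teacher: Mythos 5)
Your proposal is correct and takes essentially the same route as the paper: the paper's entire proof of this corollary is the one-line observation that the box $[n_1]\times\cdots\times[n_d]$ fits inside a $d$-sphere of radius $\frac{1}{2}\sqrt{n_1^2+\cdots+n_d^2}$, after which the whole construction of Sections 3--5 goes through with $\sqrt{d}\,n$ replaced by $N=\sqrt{n_1^2+\cdots+n_d^2}$. Your account fills in the bookkeeping (adapting the basis polynomials $\beta_{u,j}$ to account for $n_j$, rescaling $R,\lambda,a,b,b_0$ in terms of $N$, and noting that the hypothesis $n_d=\Omega(n_1^{d/(d+1)})$ guarantees $k\leq n_d$ and that $N\to\infty$) exactly as the paper intends.
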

\vspace{0.3cm}

Note that Corollary~\ref{cor7} requires  $n_1\geq n_2\geq\ldots\geq n_d$, and $n_d=\Omega(n_1^\frac{d}{d+1})$, which makes $\kappa_d(n)$ nontrivial. Therefore, Corollary~\ref{cor7} only works for the hypermatrices that close to a hypercube shape.

\vspace{0.5cm}
\section*{Appendix}
\subsection{Proof of \Cref{sym}}\label{app-1}

For each surjection $\tau:[d]\twoheadrightarrow[r]~(1\leq r\leq d)$, similar to $I_\tau$, we define
\[U_\tau=\{\vu\in [k]^d:\vu\text{ holds the same order as }\tau([d])\}.\]
Then for two distinct $\tau_1:[d]\twoheadrightarrow[r_1]$ and $\tau_2:[d]\twoheadrightarrow[r_2]$ ($r_1=r_2$ is allowable), $U_{\tau_1}\cap U_{\tau_2}=\emptyset$ and $[k]^d=\cup_{r=1}^d\cup_{\tau:[d]\twoheadrightarrow[r]}U_\tau$, i.e., $[k]^d$ is a disjoint union of all $U_\tau$.

Denote by $i_\tau$ the smallest number in $\tau^{-1}(i)$ for $1\leq i\leq r$. For each $(u_1,\ldots,u_d)\in U_\tau$, define
\[\gamma_{u_1\cdots u_d}(x_1,\ldots,x_d)=\binom{x_{1_\tau}-1}{u_{1_\tau}-1}\binom{x_{2_\tau}-x_{1_\tau}-1}{u_{2_\tau}-u_{1_\tau}-1}
\binom{x_{3_\tau}-x_{2_\tau}-1}{u_{3_\tau}-u_{2_\tau}-1}\cdots\binom{n-x_{r_\tau}}{k-u_{r_\tau}}.\]

By using the above notations, we give a proof of \Cref{sym}, which is a combination of the following two observations.

\vspace{0.3cm}
\begin{lemma}\label{entry2}
Let $A,B\in \{0,1\}^{n^{\times d}}$ and $D=A-B=(d_{i_1\cdots i_d})_{1\leq i_1,\ldots,i_d\leq n}$, then for each surjective map $\tau:[d]\twoheadrightarrow[r]~(1\leq r\leq d)$ and each $(u_1,\ldots,u_d)\in U_\tau$, the $(u_1,\ldots,u_d)$-entry of $S_k^{\text{p}}(D)$ is
\[\big(S_k^{\text{p}}(D)\big)_{u_1\cdots u_d}=\sum_{(i_1,\ldots,i_d)\in I_\tau}\gamma_{u_1\cdots u_d}(i_1,\ldots,i_d)\cdot d_{i_1\cdots i_d}.\]
\end{lemma}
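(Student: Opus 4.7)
The plan is to expand the definition of $S_k^{\text{p}}(D)$ as a sum over principal subhypermatrices indexed by $k$-subsets $T\subset [n]$, and then for a fixed entry position $(u_1,\ldots,u_d)\in U_\tau$ interchange the order of summation to count, for each source tuple $(i_1,\ldots,i_d)\in [n]^d$, exactly how many $T$'s place $d_{i_1\cdots i_d}$ at that position of the resulting $k^{\times d}$-subhypermatrix. This is the symmetric analogue of the computation made inside the proof of \Cref{nonsym}, but now the ``row-choices'' in the $d$ directions are linked through the single set $T$, which is the source of the extra $\tau$-bookkeeping.

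First I would write
\[
\big(S_k^{\text{p}}(D)\big)_{u_1\cdots u_d}\;=\;\sum_{T\subset[n],\,|T|=k} d_{t_{u_1}\cdots t_{u_d}},
\]
where $T=\{t_1<t_2<\cdots<t_k\}$, since the $(u_1,\ldots,u_d)$-entry of the principal subhypermatrix selected by $T$ is precisely $d_{t_{u_1}\cdots t_{u_d}}$. Swapping the order of summation gives
\[
\big(S_k^{\text{p}}(D)\big)_{u_1\cdots u_d}\;=\;\sum_{(i_1,\ldots,i_d)\in[n]^d} N(i_1,\ldots,i_d)\cdot d_{i_1\cdots i_d},
\]
where $N(i_1,\ldots,i_d)$ is the number of $k$-subsets $T\subset [n]$ such that $t_{u_j}=i_j$ for every $1\le j\le d$.

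The key observation is that $N(i_1,\ldots,i_d)=0$ unless $(i_1,\ldots,i_d)\in I_\tau$. Indeed, whenever $u_{j_1}=u_{j_2}$ the equation $t_{u_{j_1}}=t_{u_{j_2}}$ forces $i_{j_1}=i_{j_2}$, and whenever $u_{j_1}<u_{j_2}$ we must have $t_{u_{j_1}}<t_{u_{j_2}}$ and hence $i_{j_1}<i_{j_2}$; this is exactly the definition of $I_\tau$ for the surjection $\tau$ associated to $(u_1,\ldots,u_d)\in U_\tau$. Having reduced to $(i_1,\ldots,i_d)\in I_\tau$, the constraint $t_{u_j}=i_j$ is determined by its values on the $r$ representatives $j=1_\tau,2_\tau,\ldots,r_\tau$, producing the strictly increasing sequence $i_{1_\tau}<i_{2_\tau}<\cdots<i_{r_\tau}$ pinned to the strictly increasing ranks $u_{1_\tau}<u_{2_\tau}<\cdots<u_{r_\tau}$.

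Finally I would count $N$ by partitioning the remaining $k-r$ elements of $T$ into the gaps determined by the pinned values: pick $u_{1_\tau}-1$ elements from $\{1,\ldots,i_{1_\tau}-1\}$, then $u_{2_\tau}-u_{1_\tau}-1$ elements from $\{i_{1_\tau}+1,\ldots,i_{2_\tau}-1\}$, and so on, concluding with $k-u_{r_\tau}$ elements from $\{i_{r_\tau}+1,\ldots,n\}$. The independent binomial counts multiply to exactly $\gamma_{u_1\cdots u_d}(i_1,\ldots,i_d)$, yielding the claimed identity. The only genuinely subtle step, and what I expect to be the main obstacle in presenting the argument cleanly, is the very first reduction that isolates the role of $\tau$: translating the constraint ``$t_{u_j}=i_j$ for all $j$'' into the statement ``the relative order of $(i_1,\ldots,i_d)$ equals that of $(u_1,\ldots,u_d)$, and only the values at the $r$ representatives $j_\tau$ matter''. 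Once that is done, the gap-counting argument is a routine lattice-point calculation identical in spirit to the one-dimensional case.
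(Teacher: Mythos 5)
Your proposal is correct and takes essentially the same approach as the paper: both arguments reduce to counting, for each $(i_1,\ldots,i_d)\in I_\tau$, how many $k$-subsets $T$ place $d_{i_1\cdots i_d}$ at position $(u_1,\ldots,u_d)$, observing that the count vanishes off $I_\tau$ because relative orders must match, and then obtaining $\gamma_{u_1\cdots u_d}(i_1,\ldots,i_d)$ by a gap-filling product of binomials. Your write-up is somewhat more explicit (naming the counting function $N$ and spelling out the summation interchange), but the underlying combinatorial identity and its justification coincide with the paper's.
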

\begin{proof}
After symmetrically deleting $n-k$ rows of each dimension of $D$, each remaining entry $d_{i_1\cdots i_d}$ preserves the same coordinate order as $(i_1,\ldots,i_d)$ in the principle $k$-deck $\M_k^{\text{p}}(D)$. Hence $d_{i_1\cdots i_d}$ never occurs in the entry $(u_1,\ldots,u_d)\in U_\tau$ if $(i_1,\ldots,i_d)\notin I_\tau$. It suffices to show that if $(i_1,\ldots,i_d)\in I_\tau$, each entry $d_{i_1\cdots i_d}$ occurs $\gamma_{u_1\cdots u_d}(i_1,\ldots,i_d)$ times in $\M_k^{\text{p}}(D)$ as the $(u_1,\ldots,u_d)$-entry.

Since $(i_1,\ldots,i_d)\in I_\tau$, $\{i_1,\ldots,i_d\}=\{i_{1_\tau},\ldots,i_{r_\tau}\}$, where $i_{1_\tau}<\cdots<i_{r_\tau}$. Then $d_{i_1\cdots i_d}$ becoming the $(u_1,\ldots,u_d)$-entry is equivalent to that $\{i_{1_\tau},\ldots,i_{r_\tau}\}$ becomes $\{u_{1_\tau},\ldots,u_{r_\tau}\}$ after deletion, where $u_{1_\tau}<\cdots<u_{r_\tau}$. It degenerates into a sequence deletion with $\binom{i_{1_\tau}-1}{u_{1_\tau}-1}\binom{i_{2_\tau}-i_{1_\tau}-1}{u_{2_\tau}-u_{1_\tau}-1}
\binom{i_{3_\tau}-i_{2_\tau}-1}{u_{3_\tau}-u_{2_\tau}-1}\cdots\binom{n-i_{r_\tau}}{k-u_{r_\tau}}=\gamma_{u_1\cdots u_d}(i_1,\ldots,i_d)$ choices. That is,
\[\big(S_k^{\text{p}}(D)\big)_{u_1\cdots u_d}=\sum_{(i_1,\ldots,i_d)\in I_\tau}\gamma_{u_1\cdots u_d}(i_1,\ldots,i_d)\cdot d_{i_1\cdots i_d}.\]
\end{proof}

\vspace{0.3cm}
\begin{lemma}\label{basis2}
For each surjective map $\tau:[d]\twoheadrightarrow[r]~(1\leq r\leq d)$ and all $(u_1,\ldots,u_d)\in U_\tau$, the polynomials $\gamma_{u_1\ldots u_d}(x_1,\ldots,x_d)$ form a basis of the linear space of polynomials in $r$ variables with total degree not more than $k-r$.
\end{lemma}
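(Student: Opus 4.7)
My plan is to match dimensions first and then prove linear independence by induction on $r$. Each $\gamma_{u_1\cdots u_d}$ depends only on the $r$ variables $x_{1_\tau},\ldots,x_{r_\tau}$. Setting $z_i:=x_{i_\tau}$ and $v_i:=u_{i_\tau}$, each polynomial is indexed by an increasing sequence $1\leq v_1<v_2<\cdots<v_r\leq k$, giving $|U_\tau|=\binom{k}{r}$ polynomials in total. Summing the degrees of the factors telescopes to
\[
(v_1-1)+\sum_{i=2}^{r}(v_i-v_{i-1}-1)+(k-v_r)=k-r,
\]
so every such $\gamma$ lies in the space of polynomials in $r$ variables of total degree at most $k-r$. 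Since that space has dimension $\binom{k}{r}$, it suffices to prove linear independence.

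For the induction on $r$, the base case $r=1$ reduces to $\gamma_v(z_1)=\binom{z_1-1}{v-1}\binom{n-z_1}{k-v}$ for $v=1,\ldots,k$, which is exactly \cite[Lemma 2.3]{kos2009reconstruction}. Assuming the statement for $r-1$ variables with arbitrary size pair $(n,k)$, suppose $\sum_{v}c_v\,\gamma_v(z_1,\ldots,z_r)\equiv 0$. I will peel off coefficients one value of $v_1$ at a time by an inner induction on $v^*=1,2,\ldots,k$: the claim is that $c_v=0$ whenever $v_1\leq v^*$. Substituting $z_1=v^*$ kills every term with $v_1>v^*$ (since $\binom{v^*-1}{v_1-1}=0$ there); terms with $v_1<v^*$ already vanish by the inner hypothesis; and the $v_1=v^*$ terms contribute with leading factor $\binom{v^*-1}{v^*-1}=1$. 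The identity reduces to
\[
\sum_{v^*<v_2<\cdots<v_r\leq k}c_{v^*,v_2,\ldots,v_r}\binom{z_2-v^*-1}{v_2-v^*-1}\binom{z_3-z_2-1}{v_3-v_2-1}\cdots\binom{n-z_r}{k-v_r}\equiv 0.
\]
The substitution $z'_i:=z_i-v^*$, $v'_i:=v_i-v^*$ for $i\geq 2$, together with $n':=n-v^*$ and $k':=k-v^*$, converts this into a $\gamma$-identity in the $r-1$ variables $z'_2,\ldots,z'_r$ with increasing indices $1\leq v'_2<\cdots<v'_r\leq k'$ and size pair $(n',k')$. The outer inductive hypothesis on $r$ then forces every $c_{v^*,v_2,\ldots,v_r}$ to vanish, closing both inductions.

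The check that the substitution genuinely recovers the $\gamma$-form is routine: the intermediate factors $\binom{z_i-z_{i-1}-1}{v_i-v_{i-1}-1}$ are translation-invariant, the leading factor becomes $\binom{z'_2-1}{v'_2-1}$, and the tail becomes $\binom{n'-z'_r}{k'-v'_r}$. The main obstacle is conceptual rather than computational---spotting that evaluation at $z_1=v^*$ followed by a translation by $v^*$ produces exactly the $(r-1)$-variable analogue of the original identity. Once this is recognised, the double induction closes cleanly and no further ideas are required.
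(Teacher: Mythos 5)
Your proof is correct, and it takes a genuinely different route from the paper's. The paper proves linear independence in one shot: pick the lexicographically first index tuple with a nonzero coefficient, evaluate the whole linear combination at that very tuple, and observe that the binomial factors annihilate every other term, leaving a single nonzero contribution. You instead give a double induction---outer on $r$, with the base case $r=1$ handed off to the cited univariate result of K\'{o}s et al.; inner on the leading index $v^*$, substituting $z_1=v^*$ to kill all terms with $v_1>v^*$ and then translating the remaining variables and parameters by $v^*$ to expose an $(r-1)$-variable identity of the same $\gamma$-form with size pair $(n-v^*,k-v^*)$. Both arguments exploit the same triangular vanishing of binomial coefficients at integer points, but yours makes the self-similarity of the $\gamma$-family under specialization followed by translation explicit, which is arguably cleaner than the paper's more delicate one-step claim about exactly which tuples vanish at the chosen evaluation point. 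The one side condition you rely on---that the inductive statement hold for arbitrary admissible size pairs---is correctly flagged and is safe, since your shift preserves $n'-k'=n-k\ge 0$ and the univariate lemma holds for any $n\ge k$.
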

\begin{proof}
Obviously, the number of polynomials matches with the dimension of the linear space, which is $\binom{k}{r}$. So it suffices to prove linear independence.

Let $\lambda_{u_1\ldots u_d}$ for $ (u_1,\ldots,u_d)\in U_\tau$  be real numbers, not all zero; we have to show that
\[\sum_{(u_1,\ldots,u_d)\in U_\tau}\lambda_{u_1\ldots u_d}\gamma_{u_1\ldots u_d}(x_1,\ldots,x_d)\neq 0.\]

Let $(u_{10},\ldots,u_{d0})$ be the first set of indices in lexicographical order such that $\lambda_{u_{10}\ldots u_{d0}}\neq0$. This means that $\lambda_{u_1\ldots u_d}=0$ in every case when $u_1<u_{10}$, or $u_1=u_{10}$ and $u_2<u_{20}$, or so on. Substituting $(x_1,\ldots,x_d)=(u_{10},\ldots,u_{d0})$, by definition of $\gamma_{u_1\ldots u_d}$, we have $\gamma_{u_1\ldots u_d}(u_{10},\ldots,u_{d0})=0$ in every case when $u_1>u_{10}$, or $u_2-u_1>u_{20}-u_{10}$, or so on. The only case when $\lambda_{u_1\ldots u_d}\gamma_{u_1\ldots u_d}\neq 0$ is $(u_1,\ldots,u_d)=(u_{10},\ldots,u_{d0})$, Therefore,
\[\sum_{(u_1,\ldots,u_d)\in U_\tau}\lambda_{u_1\ldots u_d}\gamma_{u_1\ldots u_d}(u_{10},\ldots,u_{d0})=\lambda_{u_{10}\ldots u_{d0}}\gamma_{u_{10}\ldots u_{d0}}(u_{10},\ldots,u_{d0})=\lambda_{u_{10}\ldots u_{d0}}\binom{n-u_{d0}}{k-u_{d0}}\neq 0.\]
\end{proof}

\begin{proof}[Proof of \Cref{sym}]

Similar to the nonsymmetric case, \Cref{sym} follows from \Cref{entry2} and \Cref{basis2}.
\end{proof}
\vspace{0.3cm}


\end{document}